%% file: main.tex
\documentclass[11pt,twoside,reqno]{amsart}

\usepackage[utf8]{inputenc}
\usepackage[T1]{fontenc}
\usepackage{import}

\input{preamble}

\title[Universality of zeros for $p$-adic Random Matrices and Polynomials]{The Resultant Distribution Method: Universality for $p$-adic Random Matrices and Polynomials}
\author{Jiahe Shen}
\date{\today}

\begin{document}

\thanks{The author thanks Roger Van Peski for carefully reading the draft and providing helpful comments. The author acknowledges support from Ivan Corwin's NSF grant DMS-2246576 and Simons Investigator grant 929852.}

\maketitle

\begin{abstract}
We prove universality of limiting local eigenvalue statistics for random
matrices over $\Z_p$. In previous work of the author and Van Peski
\cite{shen2026eigenvalues}, the limiting eigenvalue correlation functions
of additive Haar random matrices were studied in arbitrary finite extensions
of $\Q_p$. The same Haar random matrix model plays a central role in the
Ellenberg-Jain-Venkatesh heuristic for zeros of $p$-adic $L$-functions.
We show that its limiting local eigenvalue statistics are unchanged for a
broad class of random matrices with independent entries satisfying a mild
non-concentration condition. Thus the random matrix predictions underlying
the Ellenberg-Jain-Venkatesh heuristic are not artifacts of the particular
Haar ensemble, but instead reflect universal limiting eigenvalue statistics.
In this sense, our results provide additional theoretical support for the
robustness of their random matrix heuristic.

Our proof is based on a new framework, which we call the
\emph{resultant distribution method}. The method recovers limiting laws and
root statistics of $p$-adic polynomials from the distributions of their
resultant valuations against fixed test polynomials, together with suitable
degree estimates. As a second application, we consider random $p$-adic
polynomials with independent coefficients satisfying a mild non-concentration
condition. Caruso \cite{caruso2022zeroes} determined the joint root
correlation functions of the Haar coefficient model over finite extensions
of $\Q_p$. We prove that, for roots of absolute value one, these limiting
correlation functions are universal and persist for a broad class of
independent coefficient distributions.
\end{abstract}

\textbf{Keywords: }\keywords{$p$-adic random matrix, resultant distribution method, $p$-adic random polynomial}

\textbf{Mathematics Subject Classification (2020): }\subjclass{60B20 (primary); 15B52, 11S05, 11C20 (secondary).}

\tableofcontents

\input{Introduction}
\input{Preliminaries}
\input{The_resultant_distribution_method}

\input{The_random_matrix_case}

\input{Growing_Surjection_Moments_and_Moment_Tightness}
\input{The_random_polynomial_case}

\bibliographystyle{plain}
\bibliography{references.bib}

\end{document}

%% file: preamble.tex
\usepackage{anysize}
\usepackage{amsmath,amsthm,amscd,amssymb}
\usepackage{mathtools}
\usepackage{stmaryrd}
\usepackage{wasysym}
\usepackage{mathrsfs}
\usepackage{mathdots}
\usepackage{dsfont}
\usepackage{bm}
\usepackage{scalerel,stackengine}

\usepackage{xcolor}
\usepackage{graphicx}
\usepackage{enumitem}
\usepackage{soul}
\usepackage{float}
\usepackage{makecell}
\usepackage{setspace}
\usepackage{comment}
\usepackage{csquotes}
\usepackage[all]{xy}
\usepackage{tikz}
\usepackage{tikz-cd}

\usepackage[myheadings]{fullpage}

\usepackage[pagebackref=true,hypertexnames=false]{hyperref}
\hypersetup{colorlinks=true,linkcolor=black,citecolor=black}
\renewcommand*{\backref}[1]{}
\renewcommand*{\backrefalt}[4]{({\tiny%
   \ifcase #1 Not cited.%
         \or Cited on page~#2.%
         \else Cited on pages #2.%
   \fi%
   })}
\usepackage{cleveref}

\DeclareFontFamily{U}{mathb}{}

\DeclareFontShape{U}{mathb}{m}{n}{
   <-5.5>  mathb5
   <5.5-6.5> mathb6
   <6.5-7.5> mathb7
   <7.5-8.5> mathb8
   <8.5-9.5> mathb9
   <9.5-11>  mathb10
   <11->     mathb12
}{}

\DeclareSymbolFont{mathb}{U}{mathb}{m}{n}

\DeclareMathSymbol{\lefttorightarrow}{3}{mathb}{"FC}
\DeclareMathSymbol{\righttoleftarrow}{3}{mathb}{"FD}

\makeatletter
\newcommand{\acts}{%
  \mathrel{\mathpalette\@acts\righttoleftarrow}}
\newcommand{\actedby}{%
  \mathrel{\mathpalette\@acts\lefttorightarrow}}
\newcommand{\@acts}[2]{\reflectbox{$\m@th#1#2$}}
\makeatother

\numberwithin{equation}{section}
\newcommand\mtop{.95in}
\newcommand\mbottom{.95in}
\newcommand\mleft{1in}
\newcommand\mright{1in}
\usepackage[top = \mtop, bottom = \mbottom, left = \mleft, right=\mright]{geometry}
\DeclareMathOperator{\val}{val}
\DeclareMathOperator{\Mat}{Mat}

\newtheorem{thm}{Theorem}[section]

\newtheorem{prop}[thm]{Proposition}
\newtheorem{lemma}[thm]{Lemma}

\newtheorem{cor}[thm]{Corollary}
\theoremstyle{definition}
\newtheorem{defi}[thm]{Definition}

\newtheorem{rmk}[thm]{Remark}
\newtheorem{remark}[thm]{Remark}

\crefname{thm}{theorem}{theorems}
\Crefname{thm}{Theorem}{Theorems}
\crefname{theorem}{theorem}{theorems}
\Crefname{theorem}{Theorem}{Theorems}
\crefname{prop}{proposition}{propositions}
\Crefname{prop}{Proposition}{Propositions}
\crefname{lemma}{lemma}{lemmas}
\Crefname{lemma}{Lemma}{Lemmas}
\crefname{conj}{conjecture}{conjectures}
\Crefname{conj}{Conjecture}{Conjectures}
\crefname{cor}{corollary}{corollaries}
\Crefname{cor}{Corollary}{Corollaries}
\crefname{defi}{definition}{definitions}
\Crefname{defi}{Definition}{Definitions}
\crefname{notation}{notation}{notations}
\Crefname{notation}{Notation}{Notations}
\crefname{rmk}{remark}{remarks}
\Crefname{rmk}{Remark}{Remarks}
\crefname{remark}{remark}{remarks}
\Crefname{remark}{Remark}{Remarks}

\newcommand\reallywidehat[1]{%
\savestack{\tmpbox}{\stretchto{%
  \scaleto{%
    \scalerel*[\widthof{\ensuremath{#1}}]{\kern-.6pt\bigwedge\kern-.6pt}%
    {\rule[-\textheight/2]{1ex}{\textheight}}
  }{\textheight}%
}{0.5ex}}%
\stackon[1pt]{#1}{\tmpbox}%
}
\DeclareSymbolFont{bbold}{U}{bbold}{m}{n}
\DeclareSymbolFontAlphabet{\mathbbold}{bbold}

\makeatletter
\def\@tocline#1#2#3#4#5#6#7{\relax
  \ifnum #1>\c@tocdepth 
  \else
    \par \addpenalty\@secpenalty\addvspace{#2}%
    \begingroup \hyphenpenalty\@M
    \@ifempty{#4}{%
      \@tempdima\csname r@tocindent\number#1\endcsname\relax
    }{%
      \@tempdima#4\relax
    }%
    \parindent\z@ \leftskip#3\relax \advance\leftskip\@tempdima\relax
    \rightskip\@pnumwidth plus4em \parfillskip-\@pnumwidth
    #5\leavevmode\hskip-\@tempdima
      \ifcase #1
       \or\or \hskip 1em \or \hskip 2em \else \hskip 3em \fi%
      #6\nobreak\relax
    \hfill\hbox to\@pnumwidth{\@tocpagenum{#7}}\par
    \nobreak
    \endgroup
  \fi}
\makeatother

\newcommand{\R}{\mathbb{R}}
\newcommand{\Z}{\mathbb{Z}}
\newcommand{\Q}{\mathbb{Q}}

\newcommand{\F}{\mathbb{F}}

\newcommand{\E}{\mathbb{E}}

\newcommand{\mc}{\mathcal}

\newcommand{\Y}{\mathbb{Y}}

\DeclareMathOperator{\Hom}{Hom}
\DeclareMathOperator{\Sur}{Sur}

\DeclareMathOperator{\Res}{Res}
\DeclareMathOperator{\new}{new}
\DeclareMathOperator{\Disc}{Disc}

\DeclareMathOperator{\Haar}{Haar}

\DeclareMathOperator{\Cok}{Cok}

\DeclareMathOperator{\poly}{poly}

\DeclareMathOperator{\Gal}{Gal}
\DeclareMathOperator{\Aut}{Aut}

\DeclareMathOperator{\ord}{ord}

\DeclareMathOperator{\Span}{span}

\DeclareMathOperator{\GL}{GL}

\DeclareMathOperator{\GSp}{GSp}

\DeclareMathOperator{\diag}{diag}

\DeclareMathOperator{\rank}{rank}



%% file: Introduction.tex
\section{Introduction}
\label{sec: introduction}

This paper is motivated by two recent developments in the study of random
roots over $p$-adic fields. In the random matrix setting, the author and
Van Peski \cite{shen2026eigenvalues}  developed a theory of eigenvalue correlations for additive Haar random matrices over $\Z_p$, including their limiting statistics in arbitrary finite extensions of $\Q_p$. In the random polynomial setting, Caruso
\cite{caruso2022zeroes} obtained general correlation formulas for polynomials with independent Haar-distributed coefficients. These results provide natural uniform reference models in two rather different settings. The purpose of the present paper is to understand to what extent their
limiting root statistics persist beyond the Haar case. Both problems are treated using the same basic approach, which we call the \emph{resultant distribution method}. This method reconstructs limiting root distributions and statistics from the distributions of suitable resultant valuations.

\subsection{Universality of eigenvalue correlations for $p$-adic random matrices}

On the classical random matrix theory side, the most integrable ensembles
possess remarkably strong exact structures. For the Gaussian unitary
ensemble, for instance, the eigenvalues form a determinantal point process:
all joint correlation functions are expressed as determinants of a single
correlation kernel. Such determinantal formulas go back to the classical
work of Dyson
\cite{dyson1962statisticalI,dyson1962statisticalIII} and are developed
systematically in Mehta's monograph \cite{mehta2004random}. More generally,
the theory of determinantal point processes was developed by Macchi
\cite{macchi1975coincidence} and surveyed extensively by Soshnikov
\cite{soshnikov2000determinantal}. These exact structures make it possible
to obtain remarkably precise information about eigenvalue statistics and
their asymptotic behavior, including local limits in the bulk and at the edge.

A striking feature of modern random matrix theory is that many of these
limiting statistics survive far beyond the exactly solvable ensembles.
For matrices with more general entry distributions, the determinantal
structure is typically lost, but the local statistics often remain
unchanged after the appropriate normalization. This universality phenomenon
has been established under increasingly general assumptions, notably in
work of Tao-Vu \cite{tao2011random} and of Erd\H{o}s-Yau-Yin
\cite{erdHos2012bulk}, among many others.

The theory of eigenvalues of random matrices over $p$-adic fields is much
more recent. A natural starting point is the additive Haar ensemble on
$\Mat_n(\Z_p)$, which provides the canonical uniform model in this setting.
Motivated in part by the random matrix heuristic of
Ellenberg-Jain-Venkatesh \cite{ellenberg2011modeling}, in previous work
with Van Peski \cite{shen2026eigenvalues} we studied matrices
$$
A_n^{\Haar}\in\Mat_n(\Z_p)
$$
with independent additive Haar-distributed entries. We obtained the joint
eigenvalue distributions at finite matrix size and studied their asymptotic
behavior as $n\to\infty$.

More precisely, for any finite extensions
$K_1,\ldots,K_m$ of $\Q_p$, we introduced limiting correlation functions
$$
\rho_{K_1,\ldots,K_m}^{(\infty)}
:
\mc{O}_{K_1}^{\new}
\times\cdots\times
\mc{O}_{K_m}^{\new}
\longrightarrow
\R_{\geq 0},
$$
which describe the limiting density of ordered configurations of
eigenvalues lying near prescribed points in the extensions
$K_1,\ldots,K_m$, with the points belonging to distinct Galois orbits.
Equivalently, integrating these functions over a region gives the limiting
expected number of such eigenvalue configurations in that region. We refer
the reader to \Cref{subsec: Haar random matrix correlations} for a more
detailed discussion of these limiting correlation functions and their
precise relation to eigenvalue-counting statistics.

This naturally raises the analogue of the classical universality question:
to what extent do these limiting statistics depend on the exact Haar
distribution of the entries? The main result of the present paper shows
that they are in fact robust under substantial changes of the entry
distribution. To formulate the class of distributions for which this holds,
we use the following non-concentration condition.

\begin{defi}
\label{defi: epsilon balanced}
We say that a random $p$-adic integer $\xi\in\Z_p$ is
$\epsilon$\emph{-balanced} if
$$
\mathbf{P}(\xi\equiv r\pmod p)\leq 1-\epsilon
$$
for every $r\in\{0,1,\ldots,p-1\}$.
\end{defi}

To formulate joint eigenvalue statistics, we allow the eigenvalues to lie
in possibly different finite extensions of $\Q_p$. Let
$K_1,\ldots,K_m$ be finite extensions of $\Q_p$. For each $i$, write
$$
\mc{O}_{K_i}
:=
\left\{
x\in K_i: \|x\|\leq 1
\right\}
$$
for the ring of integers of $K_i$, and set
$$
\mc{O}_{K_i}^{\new}
:=
\left\{
x\in\mc{O}_{K_i}:\Q_p[x]=K_i
\right\}.
$$
Thus $\mc{O}_{K_i}^{\new}$ consists of those algebraic integers which
generate the prescribed extension $K_i$ over $\Q_p$. Restricting to this
set allows us to record not only the location of an eigenvalue, but also
the extension of $\Q_p$ that it generates.

Given a set
$$
U
\subseteq
\mc{O}_{K_1}^{\new}
\times\cdots\times
\mc{O}_{K_m}^{\new}
$$
and a nonzero polynomial $P\in\Z_p[x]$, we define $Z_U(P)$ to be the
number of tuples
$$
(x_1,\ldots,x_m)\in U
$$
such that each $x_i$ is a root of $P$ and the elements $x_1,\ldots,x_m$ lie in pairwise distinct Galois orbits over $\Q_p$.
The latter condition ensures that two coordinates do not represent the same algebraic root up to Galois conjugacy. 
For a matrix $A_n\in\Mat_n(\Z_p)$, let
$$
P_{A_n}(x):=\det(xI_n-A_n)
$$
denote its characteristic polynomial. Thus $Z_U(P_{A_n})$ counts
ordered configurations of eigenvalues of $A_n$ lying in the prescribed
extensions and region $U$.

With this notation, our main result states that the same limiting
correlation functions arise for every independent $\epsilon$-balanced
matrix ensemble.

\begin{thm}[Universality of eigenvalue correlations]
\label{thm: random matrix eigenvalue universality}
Let $K_1,\ldots,K_m$ be finite extensions of $\Q_p$, and let
$$
U\subseteq\mc{O}_{K_1}^{\new}
\times\cdots\times
\mc{O}_{K_m}^{\new}
$$
be clopen. For every $n\geq 1$, let
$A_n\in\Mat_n(\Z_p)$ be a random matrix whose entries are independent
and $\epsilon$-balanced. Then
\begin{equation}
\label{eq: main random matrix universality}
\lim_{n\to\infty}
\E\left[Z_U(P_{A_n})
\right]=\int_U\rho_{K_1,\ldots,K_m}^{(\infty)}(x_1,\ldots,x_m)\,dx_1\cdots dx_m.
\end{equation}
Here, the integral is taken with respect to the product Haar probability measure on
$\mc{O}_{K_1}\times\cdots\times\mc{O}_{K_m}$, and
$$\rho_{K_1,\ldots,K_m}^{(\infty)}:\mc{O}_{K_1}^{\new}
\times\cdots\times
\mc{O}_{K_m}^{\new}\longrightarrow \R_{\ge 0}$$
is the limiting eigenvalue correlation function of additive Haar random
matrices obtained in \cite[Section 6]{shen2026eigenvalues}.
\end{thm}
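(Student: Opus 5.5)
The plan is to reduce the statement to a comparison with the Haar ensemble $A_n^{\Haar}$, for which \eqref{eq: main random matrix universality} holds by \cite[Section 6]{shen2026eigenvalues}. I will use the resultant distribution method: the counting statistic $Z_U(P)$ for a clopen $U$ should be determined, up to controllable error, by the joint distribution of the valuations $\val\Res(P,Q_j)$ for finitely many fixed monic test polynomials $Q_1,\ldots,Q_k\in\Z_p[x]$.

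\textbf{Step 1: locate roots through resultants.} Since $U$ is clopen in a compact set, it is a finite union of products of small balls. Whether $P$ has a root $x_i$ in a ball around a point $\alpha\in\mc{O}_{K_i}^{\new}$, and how many such roots it has, should be readable from $\val\Res(P,Q)$ for $Q$ the minimal polynomial of a suitable approximation of $\alpha$, together with a Krasner/Hensel-type argument. Thus $Z_U(P_{A_n})$ becomes a function of a finite vector of resultant valuations, provided the number of roots in the relevant region is bounded. The key identity is
\[
\Res(P_{A_n},Q)=\pm\det Q(A_n),
\]
so $\val\Res(P_{A_n},Q)$ is the length of $\Cok Q(A_n)$, viewed as a $\Z_p[x]/(Q)$-module.

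\textbf{Step 2: universality of cokernels.} I will show that for fixed $Q_1,\ldots,Q_k$, the joint law of the cokernels of $Q_j(A_n)$ (equivalently, the $\Z_p[x]$-module $\Cok$ restricted to primes dividing $\prod Q_j$ mod $p$) converges to the same limit as in the Haar case. This will go through the moment method in the style of Wood. First compute the surjection moments $\E\#\Sur_{\Z_p[x]}(\Cok(A_n),G)$ for finite $\Z_p[x]$-modules $G$ supported at these primes. Independence and the $\epsilon$-balanced condition give that $\mathbf{P}(A_nF=0)$ is close to $|G|^{-n}$ for "codes" $F$, with the non-code contribution negligible. Then invoke moment determinacy for the limiting Cohen–Lenstra-type measures. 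This yields convergence in distribution of the resultant valuation vector.

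\textbf{Step 3: from distributional convergence to expectations.} Convergence in distribution gives convergence of $\E[f(\text{valuations})\wedge M]$. To pass to $\E Z_U$ we need uniform integrability. We also need to control the region where the approximation of Step 1 fails, namely roots clustering so tightly that finitely many test polynomials cannot resolve them. I will bound $\E[Z_U^2]$, or truncated tails, uniformly in $n$ using growing surjection moments: surjection moments for $G$ of size growing slowly are still bounded. This gives uniform tail bounds on cokernel sizes and hence on the number of eigenvalues near each test point. These are the degree estimates referred to in the abstract. Letting the mesh of the test polynomials go to zero and using the matching Haar limit then yields \eqref{eq: main random matrix universality}.

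\textbf{Main obstacle.} I expect Step 3 to be the hardest part. Controlling the expected count uniformly, rather than just in distribution, requires moment estimates with $|G|$ growing in $n$, where the error terms in Wood's method must be tracked carefully. Step 2 over the ring $\Z_p[x]/(Q)$ with non-Haar entries is technical but should follow existing methods.
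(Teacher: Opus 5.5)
Your outline has the right ingredients: the identity $\Res(P_{A_n},Q)=\pm\det Q(A_n)$, cokernel universality for $Q(A_n)$ (which the paper imports directly from Cheong--Yu), growing-target surjection moments for uniform integrability, and comparison with the Haar ensemble. But the step that actually carries the argument is missing.

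The statistic $Z_U$ is not a function of finitely many resultant valuations. It is not even continuous on polynomials of bounded degree; it is locally constant only near squarefree polynomials. For example, $x^2-p^{2n+1}\to x^2$ in degree $2$, yet $Z_{p\Z_p}$ equals $0$ along the sequence and $1$ at the limit. So you must show that such near-collisions of roots, where the generated field and hence membership in $\mc{O}_{K_j}^{\new}$ is unstable, have probability tending to $0$ as the resolution scale shrinks, uniformly in large $n$. Your Step 3 assigns this to tail bounds on cokernel sizes and on ``the number of eigenvalues near each test point''. Those bounds control how many roots there are, not whether finitely many test polynomials resolve them. Getting anti-clustering that way would require tail bounds for $\val\det Q_\beta(A_n)$, uniform in $\beta$, strong enough to survive a union over a net of $\asymp p^{k[L:\Q_p]}$ points $\beta$ in every extension $L$ of bounded degree. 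Nothing in the proposal supplies this.

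Step 1 also rests on false claims. For $K\neq\Q_p$, $\mc{O}_K^{\new}=\mc{O}_K\setminus\bigcup_{L\subsetneq K}\mc{O}_L$ is dense in $\mc{O}_K$ but misses $\Z_p$, so it is not compact. A clopen $U$ (e.g.\ $U=\mc{O}_K^{\new}$ for quadratic $K$, as in the corollary) therefore need not be a finite union of balls. Counting roots in it requires separating roots from proper subfields at arbitrarily fine scales, which is the same squarefreeness issue. Nor is the number of roots in a ball readable from a single $\val\Res(P,Q)$: compare $x-p^2$ and $(x-p)^2$ at $Q=x$.

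The paper resolves this by working with the law of the distinguished factor $P_{A_n}^S$ rather than with finitely many valuations.
\begin{enumerate}
\item It proves that the law of a random element of $\mathscr{P}^S$ is determined by its one-dimensional resultant-valuation distributions. Multiplicativity of $\Res$ plus Stone--Weierstrass gives the joint laws; test polynomials $H+p^N$ separate distinct polynomials; compactness of $\mathscr{P}^S_{\le d}$ makes this separation finite at each level $p^k$.
\item Degree tightness and Prokhorov then give $P_{A_n}^S\overset{d}{\to}Q_\infty^S$ with $Q_\infty^S\overset{d}{=}Q_\infty^{\Haar,S}$.
\item The Haar limit is shown to be almost surely squarefree. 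Conditioning on the $F_i$-primary degree $r$, the Fitting decomposition shows the primary factor is the characteristic polynomial of an $r\times r$ Haar matrix conditioned on $\det(xI_r-B)\equiv F_i^{r/d_i}\bmod p$. The discriminant is a nonzero polynomial in the entries, so its zero set has measure zero.
\item $Z_U$ is continuous at squarefree polynomials by Krasner's lemma, which also handles the non-compactness of $\mc{O}_K^{\new}$. The continuous mapping theorem then gives $Z_U(P_{A_n})\overset{d}{\to}Z_U(Q_\infty^S)$.
\item Your growing-surjection-moment ingredient enters only as moment tightness of $\deg P_{A_n}^S$, which supplies uniform integrability.
\end{enumerate}
Without an identification of the full limiting law together with its squarefreeness, or an equivalent uniform anti-clustering estimate, ``letting the mesh go to zero and using the matching Haar limit'' does not go through.
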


Although \Cref{thm: random matrix eigenvalue universality} is stated in terms of
expectations, our argument in fact yields the stronger conclusion that, for
every clopen $U$ as above, the random variable $Z_U(P_{A_n})$ itself
converges weakly to the corresponding limiting root-counting random variable
for the additive Haar ensemble; see the third part of
\Cref{thm: resultant distribution method introduction}.

We now record two concrete consequences of the universality theorem by
combining it with the explicit limiting expected counts of eigenvalues and
eigenvalue tuples obtained in \cite{shen2026eigenvalues}.

\begin{cor}
\label{thm: Zp first second moments universality}
For every $n\geq 1$, let
$A_n\in\Mat_n(\Z_p)$ be a random matrix whose entries are independent
and $\epsilon$-balanced. Then, we have
$$
\lim_{n\to\infty}\E[Z_{\Z_p}(P_{A_n})]=1
$$
and
$$
\lim_{n\to\infty}\E[Z_{\Z_p}(P_{A_n})^2]
=1+\sum_{k\geq 0}\frac{(-1)^k(1-p^{-1})(1+p^{-k-1})p^{-\frac{k^2+k}{2}}}{1-p^{-k^2-2k-2}}.
$$
\end{cor}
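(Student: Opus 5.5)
The plan is to deduce both limits from \Cref{thm: random matrix eigenvalue universality} together with the explicit Haar computations of \cite{shen2026eigenvalues}. The universality theorem transfers statements about $\E[Z_U(P_{A_n})]$ for clopen $U$ to the additive Haar ensemble. So the argument has two parts: express both moments as such expectations (possibly summed over finitely or countably many $U$), and then handle the interchange of limit and summation.

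\textbf{First moment.} Take $m=1$, $K_1=\Q_p$ and $U=\mc{O}_{\Q_p}^{\new}=\Z_p$. Then $Z_{\Z_p}(P_{A_n})$ counts the roots of $P_{A_n}$ in $\Z_p$. \Cref{thm: random matrix eigenvalue universality} gives
\[
\lim_{n\to\infty}\E[Z_{\Z_p}(P_{A_n})]=\int_{\Z_p}\rho^{(\infty)}_{\Q_p}(x)\,dx .
\]
I would then quote from \cite[Section 6]{shen2026eigenvalues} that $\rho^{(\infty)}_{\Q_p}\equiv 1$, or equivalently that the limiting expected number of $\Z_p$-eigenvalues of an additive Haar matrix equals $1$. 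This yields the first claim.

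\textbf{Second moment.} Here I would need to be careful about multiplicities. I would use the convention that $Z$ counts roots with multiplicity, or check that repeated roots occur with probability tending to zero for $\epsilon$-balanced entries; the latter should follow from the resultant-distribution framework, since the discriminant is nonzero almost surely in the limit. Under either reading,
\[
Z_{\Z_p}^2=Z_{\Z_p}+Z_{U_2},\qquad U_2=\{(x_1,x_2)\in\Z_p^2:\ x_1\neq x_2\}.
\]
Distinct elements of $\Q_p$ lie in distinct Galois orbits, so $Z_{U_2}$ is exactly the ordered pair count. The set $U_2$ is open but not closed, so I would write it as a countable disjoint union of clopen sets
\[
U_{2,k}=\{(x_1,x_2):\val(x_1-x_2)=k\},\qquad k\ge 0 .
\]
Applying the theorem to each $U_{2,k}$ and summing against the explicit two-point function $\rho^{(\infty)}_{\Q_p,\Q_p}$ from \cite{shen2026eigenvalues} should produce the displayed series. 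Its $k$-th term is the Haar pair contribution at distance exactly $p^{-k}$, and the leading $1$ is the first moment.

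\textbf{Main obstacle.} The hard step is interchanging $\lim_n$ with $\sum_k$, which requires a uniform tail bound
\[
\sum_{k>K}\E[Z_{U_{2,k}}(P_{A_n})]\to 0 \quad\text{as } K\to\infty,\ \text{uniformly in } n .
\]
I would first try the truncation $\sum_{k>K}Z_{U_{2,k}}\le Z_{\Z_p}^2\,\mathbf 1\{\text{two roots congruent mod }p^{K}\}$. The event that two roots are congruent modulo $p^K$ forces $\val\Disc(P_{A_n})$, or $\val\Res(P,P')$, to be large. So the tail should be controlled by the tightness of resultant valuations together with a uniform bound on a slightly higher moment of $Z_{\Z_p}$. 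Both are presumably provided by the moment-tightness estimates of the later sections, for instance bounding $Z_{\Z_p}$ via the number of roots modulo $p$ and surjection moments of the cokernel. Alternatively, if the paper's framework already gives weak convergence of $Z_{\Z_p}(P_{A_n})$ with uniformly bounded third moments, uniform integrability handles the second moment directly, and the explicit value can simply be read off from the Haar case in \cite{shen2026eigenvalues}.
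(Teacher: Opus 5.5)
Your first-moment argument is the paper's; the paper cites \cite[Theorem~9.1]{shen2026eigenvalues} for $\rho^{(\infty)}_{\Q_p}\equiv 1$. For the second moment, however, you misread $Z_U$, and this creates the obstacle that you then leave open. The requirement that the coordinates lie in pairwise distinct Galois orbits is built into the \emph{definition} of $Z_U$, so you do not have to encode $x_1\neq x_2$ in $U$. Take $m=2$, $K_1=K_2=\Q_p$ and $U=\Z_p\times\Z_p=\mc{O}_{\Q_p}^{\new}\times\mc{O}_{\Q_p}^{\new}$, which is clopen. Distinct points of $\Q_p$ lie in distinct orbits, and $Z_U$ counts roots as points rather than with multiplicity. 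Hence $Z_{\Z_p^2}(P)=Z_{\Z_p}(P)\bigl(Z_{\Z_p}(P)-1\bigr)$ holds identically, with no squarefreeness input. \Cref{thm: random matrix eigenvalue universality} then applies directly to this $U$ and gives $\lim_{n}\E[Z_{\Z_p}(P_{A_n})^2]=\int_{\Z_p^2}\rho^{(\infty)}_{\Q_p,\Q_p}(x,y)\,dx\,dy+1$. This is the Haar limiting second moment computed in \cite[Corollary~9.6]{shen2026eigenvalues}. You need no decomposition into the sets $U_{2,k}$, no tail bound, and no interchange of $\lim_n$ with $\sum_k$.

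As written, your proposal does not establish that interchange. The discriminant and tightness argument is only sketched (``presumably''), so the step you yourself call the hard one is missing. Your closing alternative would in fact work, but you state it only conditionally, and it is unnecessary. Choose $S$ with $\Z_p\subseteq\bigcup_{i\in S}\mc{U}_i$; then $Z_{\Z_p}(P_{A_n})\le\deg P^S_{A_n}$, so \Cref{prop: degree moment tightness} with exponent $2$ gives uniform integrability of $Z_{\Z_p}(P_{A_n})^2$. Together with the weak convergence in \Cref{thm: expected root statistics convergence} and the identification of the limit polynomial with the Haar one, this suffices.

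There are two smaller points. First, the identity $Z_{\Z_p}^2=Z_{\Z_p}+Z_{U_2}$ does not hold ``under either reading''. Counted with multiplicity, it fails whenever a repeated root lies in $\Z_p$, which has positive probability at finite $n$ for discrete entries. Second, the $k$-th term of the displayed series is not the Haar pair contribution at distance exactly $p^{-k}$. The terms alternate in sign (the $k=1$ term is negative), so they cannot be expected counts, and the value has to be quoted as a total from the Haar computation.
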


Our second example concerns eigenvalues generating a quadratic extension.

\begin{cor}
\label{thm: quadratic extension universality}
Let $K/\Q_p$ be a quadratic extension. For every $n\geq 1$, let
$A_n\in\Mat_n(\Z_p)$ be a random matrix whose entries are independent
and $\epsilon$-balanced. Then, we have
$$
\lim_{n\to\infty}\E[Z_{\mc{O}_K^{\new}}(P_{A_n})]=\begin{cases}
\displaystyle
\sum_{k\geq 0}\frac{(1-p^{-1})(1-p^{-k-1})(1+p^{-k^2-2k-3})
p^{-\frac{k^2+k}{2}}}{(1-p^{-k^2-2k-2})(1-p^{-k^2-2k-3})},
&K/\Q_p\text{ unramified},
\\
\left\lVert\Disc_{K/\Q_p}\right\rVert
\sum_{k\geq 0}\frac{(1-p^{-1})(1-p^{-2k-2})p^{-k^2-k}}{(1-p^{-k^2-2k-2})(1-p^{-k^2-2k-3})},& K/\Q_p\text{ ramified}.
\end{cases}
$$
\end{cor}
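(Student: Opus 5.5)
The statement to be proved is \Cref{thm: quadratic extension universality}. The plan is to reduce it to the universality theorem and then quote the corresponding Haar computation. Take $m=1$, $K_1=K$ and $U=\mc{O}_K^{\new}$.

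First, check that $U$ is clopen in $\mc{O}_K^{\new}$. This is immediate, since $U$ is the whole space; it is also open in $\mc{O}_K$, because for a quadratic $K$ the set $\mc{O}_K^{\new}=\mc{O}_K\setminus\Z_p$ is the complement of the closed subspace $\Z_p$. With this, \Cref{thm: random matrix eigenvalue universality} applies and gives
$$
\lim_{n\to\infty}\E[Z_{\mc{O}_K^{\new}}(P_{A_n})]=\int_{\mc{O}_K^{\new}}\rho^{(\infty)}_K(x)\,dx .
$$
The same theorem, applied to the Haar ensemble $A_n^{\Haar}$ (whose entries are independent and $\epsilon$-balanced for $\epsilon=1-p^{-1}$), shows that the right-hand side also equals $\lim_{n}\E[Z_{\mc{O}_K^{\new}}(P_{A_n^{\Haar}})]$. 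The corollary therefore reduces to computing this Haar limit.

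Second, evaluate the integral using the explicit density from \cite[Section 6]{shen2026eigenvalues}. For a single root generating a quadratic extension, that density should depend on $x$ only through the conductor-type quantity $k=\val(\text{index of }\Z_p[x]\text{ in }\mc{O}_K)$, or equivalently through $\|\Disc(x)\|=\|\Disc_{K/\Q_p}\|\,p^{-2k}$. The steps are:
\begin{enumerate}
\item Partition $\mc{O}_K^{\new}$ into the shells $\{x:\Z_p[x]=\Z_p+p^k\mc{O}_K\}$ for $k\ge 0$. The Haar measure of the $k$-th shell is $(1-p^{-2})p^{-2k}$ in the unramified case and $(1-p^{-1})p^{-2k}$ in the ramified case, up to the normalization.
\item On each shell, substitute the explicit value of $\rho^{(\infty)}_K$ and sum over $k$. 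The denominators $(1-p^{-k^2-2k-2})(1-p^{-k^2-2k-3})$ should come from the cokernel and Cohen–Lenstra-type factors appearing in that density.
\item Finally, separate the unramified and ramified cases. In the ramified case the factor $\|\Disc_{K/\Q_p}\|$ comes out of the discriminant.
\end{enumerate}

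The reduction itself is routine. The main obstacle is the bookkeeping needed to match the series term by term with the displayed formulas: keeping the Haar-measure normalization on $\mc{O}_K$ consistent, and handling the distinct-Galois-orbit convention (each conjugate pair is counted twice, once as $x$ and once as $\bar x$). If the paper of Shen–Van Peski already states these expected counts explicitly, I would simply cite them instead of recomputing, so that the corollary follows directly from \Cref{thm: random matrix eigenvalue universality}.
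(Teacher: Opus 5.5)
Your reduction is the paper's proof. Apply \Cref{thm: random matrix eigenvalue universality} with $m=1$, $K_1=K$ and $U=\mc{O}_K^{\new}$, which is clopen in itself. Then identify $\int_{\mc{O}_K^{\new}}\rho_K^{(\infty)}(x)\,dx$ with the limiting expected count for the additive Haar ensemble, and cite the explicit evaluation in \cite[Theorem 1.10]{shen2026eigenvalues}. That is your fallback option, and it is the one to take.

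Drop the recomputation sketch, because it is unnecessary and partly wrong. Write $\mc{O}_K=\Z_p\oplus\Z_p\omega$ and $x=a+b\omega$. Then $\Z_p[x]=\Z_p+p^{\val(b)}\mc{O}_K$, so the $k$-th shell has Haar measure $(1-p^{-1})p^{-k}$ in both the unramified and the ramified case. Your ramified value $(1-p^{-1})p^{-2k}$ does not even sum to $1$. Finally, identifying the summation index in the displayed formulas with this shell index is only a guess.
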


Universality phenomena for $p$-adic random matrices have previously
appeared primarily for singular numbers and cokernels. Van Peski
\cite{van2021limits} initiated a comparison between $p$-adic random matrix
theory and its real and complex counterparts, with $p$-adic singular numbers playing the role of logarithmic singular values. In a different direction,
Wood, Nguyen-Wood, and others established broad universality results for
random cokernels; see, for example,
\cite{wood2019random,nguyen2022random}. Particularly relevant to the present
work is Wood's theorem \cite{wood2019random}: for precisely the same class of random matrix ensembles considered here--matrices over $\Z_p$ with
independent $\epsilon$-balanced entries, the limiting distribution of the
cokernel is independent of the entry distributions and agrees with that of
the additive Haar ensemble.

The present paper reveals a different universality phenomenon within this
same underlying class of random matrices. Rather than the cokernel, we study
the eigenvalues themselves and show that their limiting local correlation
statistics are likewise insensitive to the entry distributions. Thus the
universality in \Cref{thm: random matrix eigenvalue universality} concerns a
new statistical dimension of the same random matrix ensemble. As far as we
are aware, it is the first universality theorem for local eigenvalue
statistics of $p$-adic random matrices. Unlike singular numbers or cokernels,
these statistics also retain the arithmetic information of the finite
extensions of $\Q_p$ generated by the eigenvalues.

Our result also strengthens the random matrix interpretation of the
Ellenberg-Jain-Venkatesh heuristic
\cite{ellenberg2011modeling}. Their heuristic is formulated in terms of a
Haar random matrix model \footnote{More precisely, Ellenberg-Jain-Venkatesh consider Haar-random elements of
$\GL_n(\Z_p)$ and of the corresponding symplectic similitude ensembles
$\GSp_\alpha(2n,\Z_p)$, and show in \cite[Theorem~4.1]{ellenberg2011modeling}
that the two ensembles induce the same limiting statistics for the associated
distinguished polynomial. In \cite[Section~7]{shen2026eigenvalues}, we further show that the limiting local eigenvalue statistics of Haar-random elements of $\GL_n(\Z_p)$ agree with those of additive Haar random matrices in
$\Mat_n(\Z_p)$, apart from the obvious restriction imposed by invertibility.
In particular, after translation by $1$, the local statistics relevant to the
Ellenberg-Jain-Venkatesh model coincide with those of the additive Haar
ensemble.}, whose limiting eigenvalue statistics were studied
in \cite{shen2026eigenvalues}. \Cref{thm: random matrix eigenvalue universality}
shows that the relevant limiting local statistics are not peculiar to the
Haar ensemble: the same limits arise for every random matrix ensemble with
independent $\epsilon$-balanced entries. Thus, to the extent that the random
matrix model captures the behavior predicted by the Ellenberg-Jain-Venkatesh heuristic, these predictions reflect a robust
universality phenomenon rather than a feature tied to one particular choice
of entry distribution.

\subsection{Idea of the proof}

A recurring principle in universality problems is to avoid comparing the random objects of interest directly. Instead, one first proves universality for a sufficiently rich family of auxiliary statistics and then recovers the desired limiting information from them. Classical examples of this philosophy include the moment method for Wigner matrices \cite{wigner1993characteristic1,wigner1993characteristic2,wigner1958distribution} and the surjection moment method pioneered by Wood in the study of random cokernels \cite{wood2017distribution}.

In parallel with these approaches, the main new ingredient of the present paper is what we call the \emph{resultant distribution method}. The guiding idea is to study a random polynomial indirectly through its resultants against a sufficiently rich collection of fixed test polynomials. More precisely, for a fixed monic polynomial $Z\in\Z_p[x]$, we consider the random variable
$$
\val\bigl(\Res(P,Z)\bigr)\in\Z_{\ge 0}\cup\{\infty\},
$$
where $\Res(P,Z)$ denotes the resultant of $P$ and $Z$, which measures the proximity between the roots of the two polynomials. We refer to \Cref{subsec:zpbar} for its precise definition and the basic properties.

A single such statistic contains only limited information about $P$. Taken over all suitable test polynomials $Z$, however, these resultant valuations determine the distribution of $P$ and, under appropriate control of its degree, also determine the limiting behavior of its root statistics.

We briefly introduce the setting in which the method is formulated. For the rest of the paper, we denote by $F_1=x,F_2,F_3,\ldots$ an enumeration of all monic irreducible polynomials over $\F_p$. The ring $\overline{\Z}_p$ of algebraic integers in $\overline{\Q}_p$ admits a corresponding decomposition
$$
\overline{\Z}_p=\bigsqcup_{i\geq 1}\mc{U}_i,
$$
where $\mc{U}_i$ consists of the algebraic integers whose minimal polynomial over $\Q_p$ reduces modulo $p$ to a power of $F_i$. We refer to $\mc{U}_i$ as the lifted subspace associated with $F_i$. In particular,
$$
\mc{U}_1=\{x\in\overline{\Q}_p:||x||_p<1\}
$$
is the open unit disk. Fix a finite set $S\subset\Z_{>0}$, and let $\mathscr{P}^S$ denote the space of monic polynomials over $\Z_p$ all of whose roots lie in $\bigcup_{i\in S}\mc{U}_i$. The following theorem summarizes the three main principles underlying the resultant distribution method. Precise versions, together with their proofs, are given in \Cref{sec: resultant distribution method}.

\begin{thm}[The resultant distribution method]
\label{thm: resultant distribution method introduction}
Fix a finite set $S\subseteq\Z_{>0}$.

\begin{enumerate}
\item
\emph{(Determination by resultant distributions).}
The law of a random polynomial $P^S\in\mathscr{P}^S$ is determined by the
distributions
$$
\val\left(\Res(P^S,Z)\right),
\qquad
Z\in\mathscr{P}^S.
$$
In particular, two random polynomials in $\mathscr{P}^S$ having the same
resultant-valuation distribution for every fixed $Z\in\mathscr{P}^S$ have
the same law.

\item
\emph{(Weak convergence from resultant distributions).}
Let $P_1^S,P_2^S,\ldots$ be random polynomials in $\mathscr{P}^S$. Suppose
that, for every fixed $Z\in\mathscr{P}^S$,
$$
\val\left(\Res(P_n^S,Z)\right)
$$
converges in distribution, and suppose that the sequence of random degrees
$\deg P_n^S$ is tight, in the sense that
$$
\lim_{D\to\infty}\sup_{n\geq 1}
\mathbf{P}\left(\deg P_n^S>D\right)=0.
$$
Then there exists a random polynomial $P_\infty^S\in\mathscr{P}^S$ such
that
$$
P_n^S\overset{d}{\longrightarrow}P_\infty^S.
$$

\item
\emph{(Convergence of root statistics and their expectations).}
Assume the hypotheses of the previous item, and let $P_\infty^S$ be the
resulting limiting random polynomial. Let $K_1,\ldots,K_m$ be finite
extensions of $\Q_p$, and let
$$
U\subseteq
\mc{O}_{K_1}^{\new}
\times\cdots\times
\mc{O}_{K_m}^{\new}
$$
be clopen. If $P_\infty^S$ is almost surely squarefree, then
$$
Z_U(P_n^S)\overset{d}{\longrightarrow}Z_U(P_\infty^S).
$$
If, in addition, the degree-tightness assumption is strengthened to
$$
\lim_{D\to\infty}\sup_{n\geq 1}
\E\left[
(\deg P_n^S)^m
\mathbf{1}_{\{\deg P_n^S>D\}}
\right]
=
0,
$$
then
$$
\lim_{n\to\infty}
\E\left[Z_U(P_n^S)\right]
=
\E\left[Z_U(P_\infty^S)\right].
$$
\end{enumerate}
\end{thm}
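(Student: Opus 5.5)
The plan is to reduce all three parts to a statement about the finite-level ``root configuration'' of $P^S$, i.e. the image of $P^S$ modulo $p^k$ in a suitable finite quotient, and then to build the limit by a projective-limit and tightness argument.

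\textbf{Part (1).} Truncate at level $k$. For $P\in\mathscr{P}^S$ of degree $d$ and a monic $Z\in\mathscr{P}^S$, write
$$\val\Res(P,Z)=\sum_{\alpha,\beta}\val(\alpha-\beta),$$
where $\alpha$ ranges over the roots of $P$ and $\beta$ over the roots of $Z$. This is a sum of local ``distance'' contributions. The key finite claim is that the joint law of
$$\bigl(\deg P,\ \text{class of }P\bmod p^k\text{ in the degree-}d\text{ stratum}\bigr)$$
is determined by the laws of the truncated quantities $\min(\val\Res(P,Z),N)$, as $Z$ ranges over finitely many test polynomials. I would take these test polynomials to be products $Z=\prod_j Z_j^{e_j}$ of lifts of residue classes.

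The key identity is multiplicativity, $\Res(P,Z_1Z_2)=\Res(P,Z_1)\Res(P,Z_2)$. Hence the distribution of $\val\Res(P,\prod Z_j^{e_j})=\sum e_j\val\Res(P,Z_j)$ for all exponent vectors $(e_j)$ determines the joint law of the vector $(\val\Res(P,Z_j))_j$. This is a Cram\'er--Wold-type statement for $\Z_{\ge0}\cup\{\infty\}$-valued vectors, handled via generating functions $\E[t^{\sum e_jX_j}]$ on $[0,1]$. Then I would show that the vector $(\val\Res(P,Z_j))_j$, over a sufficiently rich finite family (for example, all monic $Z_j$ of degree $\le d$ representing classes mod $p^k$), separates the classes of degree-$d$ polynomials mod $p^k$. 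By a Krasner/Newton-polygon argument, the valuations $\val\Res(P,Z)$ pin down the distances from the roots of $P$ to the roots of every $Z$, and these distances recover $P$ up to $p^{k}$-approximation. Since $\mathscr{P}^S$ is profinite within each degree, agreement at every level $k$ gives equality of laws.

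\textbf{Part (2).} For each $k$, the finitely many truncated resultant valuations converge jointly, again via multiplicativity. By Part (1) at finite level, the law of $(\deg P_n,\ P_n\bmod p^k)$ is a fixed continuous function of these joint laws, so it converges, provided no mass escapes to infinite degree. Degree tightness guarantees exactly this and also makes the limits honest probability measures. The limiting laws at different levels $k$ are consistent, so Kolmogorov extension on the profinite space $\bigsqcup_d\mathscr{P}^S_d$ yields $P^S_\infty$ with $P^S_n\to P^S_\infty$ in distribution.

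\textbf{Part (3).} It suffices to show that $P\mapsto Z_U(P)$ is locally constant at squarefree $P$ within a fixed-degree stratum. By Krasner's lemma, if $Q\equiv P$ modulo a high enough power of $p$ (depending on $\Disc P\neq0$), then the roots of $Q$ and $P$ correspond bijectively. Corresponding roots generate the same field, lie in the same Galois-orbit pattern, and are close. Since $U$ is clopen, membership in $U$ is also preserved. Thus $Z_U$ is continuous on a set of full $P^S_\infty$-measure, and the continuous mapping theorem gives $Z_U(P_n^S)\to Z_U(P_\infty^S)$ in distribution.

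For expectations, use the bound $Z_U(P)\le(\deg P)^m$. The strengthened hypothesis gives uniform integrability of $Z_U(P_n^S)$, so convergence in distribution upgrades to convergence of means.

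\textbf{Main obstacle.} The hardest step is the separation claim in Part (1): showing that resultant valuations against finitely many test polynomials recover $P$ modulo $p^k$, and not merely its root distances, when roots cluster or have multiplicity. I would first try Newton polygons of $P(x+\beta)$ for the roots $\beta$ of $Z$. Failing that, I would fall back on working directly with the law of the point configuration $\{\val(\alpha-\beta)\}$.
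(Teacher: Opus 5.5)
\textbf{Verdict: there is a genuine gap.} It sits in the separation step of part (1), on which your part (2) also depends. You never show that finitely many resultant valuations distinguish the class of $P$ modulo $p^k$ from its complement in $\mathscr{P}^S_{\le d}$; you flag this as the main obstacle but do not resolve it, and the sketch you give does not close it.

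\textbf{Why the sketch falls short.} Each $\val\Res(P,Z)=\sum_{\alpha,\beta}\val(\alpha-\beta)$ records only an aggregate over all pairs of roots. Extracting individual root distances from the family is one hard step. Recovering the coefficients of $P$ modulo $p^k$ from those distances, when roots cluster or repeat, is a second one. Your explicit family (one lift per class modulo $p^k$, degree $\le d$) is likewise unjustified: nothing shows that test polynomials of precision $p^k$ suffice.

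\textbf{A smaller omission.} Separation inside $\mathscr{P}^S_{\le d}$ must be paired with a tail estimate making $\mathbf{P}(\deg P_1^S>d)+\mathbf{P}(\deg P_2^S>d)$ small. Without it, higher-degree polynomials can produce the same valuation vector.

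\textbf{What is fine.} Your multiplicativity reduction to joint laws matches the paper, which uses mixed moments of $p^{-\val\Res(P,Z_j)}$ plus Stone--Weierstrass. Part (3) is the paper's argument. Building the limit in part (2) directly from convergent cylinder probabilities plus tightness is a valid alternative to the paper's Prokhorov-plus-uniqueness route, once separation is available.

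\textbf{The missing idea.} No reconstruction of $P$ is needed: separate pairs pointwise, then use compactness.
\begin{enumerate}
\item If $Q_1\neq Q_2$ in $\mathscr{P}^S$, choose a monic irreducible $H\in\Z_p[x]$ occurring in them with multiplicities $m_1\neq m_2$, and write $Q_i=H^{m_i}R_i$ with $H\nmid R_i$.
\item Test against $Z_N:=H+p^N$, which lies in $\mathscr{P}^S$ because $Z_N\equiv H\pmod p$. Then $\Res(H,Z_N)=p^{N\deg H}$.
\item For large $N$, $\val\Res(R_i,Z_N)=\val\Res(R_i,H)<\infty$. Hence $\val\Res(Q_i,Z_N)=m_iN\deg H+O(1)$, and these differ for $i=1,2$.
\item The map $Q\mapsto\min\{\val\Res(Q,Z),M\}$ is locally constant on $\mathscr{P}^S_{\le d}$, so each separated pair has a product neighborhood on which the separation persists.
\item The set $\{Q\in\mathscr{P}^S_{\le d}:Q\equiv P\pmod{p^k}\}\times\{Q\in\mathscr{P}^S_{\le d}:Q\not\equiv P\pmod{p^k}\}$ is compact, so finitely many test polynomials suffice.
\end{enumerate}
This needs no Newton polygons and no control over the precision of the resulting $Z$'s. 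Combined with the degree tail bound and the joint-law lemma, it gives part (1), and your part (2) then goes through.
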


The first part of \Cref{thm: resultant distribution method introduction} is a uniqueness statement, while the second and third parts are two robustness statements of increasing strength. This philosophy is closely related to the uniqueness and robustness principles developed by Sawin-Wood \cite[Theorems 1.7 and 1.8]{sawin2022moment} for moment problems of random objects in a diamond category. Classical counterparts
for real random variables are the determinate moment problem of Stieltjes and
the moment-convergence theorem of Fr\'echet-Shohat \cite{stieltjes1894recherches,frechet1931proof}. The precise meaning of weak convergence here, with respect to the topology on the space of polynomials used throughout the paper, is clarified at the
beginning of \Cref{sec: resultant distribution method}.

We now explain why the method is especially well adapted to random
matrices. Let
$$
P_{A_n}(x)=\det(xI_n-A_n).
$$
For every monic $Z\in\Z_p[x]$, one has the elementary but crucial
identity
\begin{equation}
\label{eq: intro resultant matrix identity}
\Res(P_{A_n},Z)=\det\left(Z(A_n)\right).
\end{equation}
Consequently, whenever $Z(A_n)$ is nonsingular, 
$$
\val(\Res(P_{A_n},Z))=\val(\det(Z(A_n)))
=\val(\#\Cok(Z(A_n))).
$$
Thus a statistic of the eigenvalues of $A_n$ is transformed into a
statistic of a random cokernel.

The universality theorem of Cheong-Yu
\cite{cheong2023distribution}, whose proof builds on the surjection
moment method of Sawin-Wood \cite{sawin2022moment}, then gives the
required convergence of the resultant distributions for every fixed test
polynomial $Z$.

The second ingredient is control of the degree of the distinguished
factor. For the present paper, ordinary tightness is not enough, since our
main theorem concerns expectations of arbitrary fixed-order joint
eigenvalue statistics. We prove the stronger estimate
\begin{equation}
\label{eq: matrix moment tightness intro}
\lim_{D\to\infty}\sup_{n\geq 1}
\E\left[\left(\deg P_{A_n}^S
\right)^{m_0}\mathbf{1}_{\{\deg P_{A_n}^S>D\}}\right]=0,
\qquad m_0\geq 1.
\end{equation}
The proof follows the spirit of the surjection-moment method developed in the study of random cokernels, where one typically studies
$$
\E\#\Sur(\cdot,G)
$$
for a fixed finite target $G$. In our setting, however, the target must be allowed to grow with the matrix size. Let $B_n\in\Mat_n(\F_p)$ such that
$$
B_n:=A_n\bmod p,
$$
and
$$
M_n:=\Cok_{\F_p[t]}(tI_n-B_n).
$$
We establish estimates for
$$
\E\#\Sur_{\F_p[t]}(M_n,G_n)
$$
that are uniform for finite $\F_p[t]$-modules $G_n$ satisfying
$$
\dim_{\F_p}G_n=O(\log n).
$$
This additional uniformity controls both the largest part and the number of parts of the primary partitions of $B_n$, from which
\eqref{eq: matrix moment tightness intro} follows.

Finally, in the additive Haar ensemble, the limiting distinguished
polynomial is almost surely squarefree. After conditioning on a fixed
primary degree, its law reduces to the characteristic polynomial of a
fixed-dimensional Haar matrix subject to the corresponding primary
condition. This is the same finite-dimensional reduction underlying
\cite[Theorem~4.1]{ellenberg2011modeling}. The nonsquarefree locus is
contained in the zero set of the discriminant and therefore has Haar
measure zero.

These three ingredients verify the hypotheses of the third part of
\Cref{thm: resultant distribution method introduction} and prove
\Cref{thm: random matrix eigenvalue universality}. An intermediate
consequence, which is useful in its own right, is that for every finite
$S\subseteq\Z_{>0}$ there exists a universal random polynomial $Q_\infty^S\in\mathscr{P}^S$ such that
$$
P_{A_n}^S\overset{d}{\longrightarrow}
Q_\infty^S.
$$
The limiting law agrees with the corresponding additive Haar matrix
limit and is independent of the distributions of the entries of $A_n$.

\subsection{Random polynomials with independent coefficients}

The same resultant distribution method also applies to ordinary random
polynomials with independent coefficients. We regard this as a second
application of the general framework.

Let
\begin{equation}\label{item: P_n}
P_n(x)=\xi_nx^n+\cdots+\xi_1x+\xi_0
\in\Z_p[x],
\end{equation}
where the coefficients are independent and $\epsilon$-balanced.
Questions about universality of roots of random polynomials have a long
history over $\R$. For example, Kac
\cite{kac1943average} computed the expected number of real roots in the
Gaussian model. Erd\H{o}s-Offord
\cite{erdos1956number} and Ibragimov-Maslova \cite{ibragimov1971expected} developed early universality results for the expected number of real roots under much more general coefficient
distributions. More recently, Tao-Vu
\cite{tao2015local} established local universality for zeros of random
polynomials, and Nguyen-Vu \cite{nguyen2022roots} developed a general
framework for such local universality phenomena.

Over $\Q_p$, Shmueli \cite{shmueli2023expected} proved that for
independent $\epsilon$-balanced coefficients \footnote{The results  of Shmueli \cite{shmueli2023expected} are formulated for monic $p$-adic random polynomials. The same argument applies, with only minor modifications, when the leading coefficient is random and satisfies the same non-concentration assumption as the remaining coefficients.},
$$
\lim_{n\to\infty}
\E\left[Z_{\Z_p^\times}(P_n)
\right]=\frac{p-1}{p+1}.
$$
For additive Haar coefficients, much more general joint root statistics
were obtained by Caruso \cite{caruso2022zeroes}. We briefly translate his
notation into the one used in the present paper. Fix finite extensions
$K_1,\ldots,K_m$ of $\Q_p$, set
$$
E:=K_1\times\cdots\times K_m,
\qquad
r:=\sum_{i=1}^m[K_i:\Q_p],
$$
and let
\begin{equation}\label{eq: Haar random polynomial}
P_n^{\Haar}(x)=\eta_nx^n+\cdots+\eta_1x+\eta_0,
\end{equation}
where the coefficients are independent and additive Haar distributed on
$\Z_p$. In \cite[Theorem~5.8]{caruso2022zeroes}, Caruso denotes by
$\rho_{E,n}$ the corresponding correlation density on the product
$K_1\times\cdots\times K_m$, and by $Z_{U,n}^{\new}$ the number of ordered
root configurations in $U$ whose coordinates generate the prescribed
extensions and lie in pairwise distinct Galois orbits. Thus, in the notation
of the present paper,
$$
Z_{U,n}^{\new}=Z_U(P_n^{\Haar}).
$$
Moreover, Caruso \cite[Theorem~5.8]{caruso2022zeroes} proves that
$$
\E\left[Z_U(P_n^{\Haar})\right]=\int_U\rho_{E,n}(x_1,\ldots,x_m)\,dx_1\cdots dx_m
$$
and that $\rho_{E,n}$ is independent of $n$ once $n\geq 2r-1$.

To distinguish this stabilized Haar polynomial correlation function from
the random matrix correlation function
$\rho_{K_1,\ldots,K_m}^{(\infty)}$ used in
\Cref{thm: random matrix eigenvalue universality}, we introduce the notation
$$
\rho_{K_1,\ldots,K_m}^{(\infty),\poly}
:=\rho_{E,n},\qquad n\geq 2r-1.
$$
Thus
$$
\rho_{K_1,\ldots,K_m}^{(\infty),\poly}:\mc{O}_{K_1}^{\new}\times\cdots\times\mc{O}_{K_m}^{\new}\longrightarrow\R_{\geq 0}
$$
is precisely Caruso's stabilized correlation density, rewritten in terms
of the tuple of extensions $K_1,\ldots,K_m$ rather than the product $E$.
It describes the stabilized local density of ordered root configurations
for the additive Haar coefficient model.

In the random polynomial setting, universality under the sole
$\epsilon$-balanced assumption must be restricted to roots of absolute
value $1$. Accordingly, define
$$
\mc{O}_{K}^{\times,\new}
:=
\mc{O}_{K}^{\times}
\cap
\mc{O}_{K}^{\new}.
$$
For a nonzero polynomial $P\in\Z_p[x]$, we retain the notation $Z_U(P)$
for the corresponding root-tuple count. Since the event $P_n=0$ may have
positive probability for general discrete coefficient distributions, we
adopt the convention $Z_U(0):=0$. This removes the degenerate case in which
every element of $U$ is formally a root and the usual finite root-counting
statistic is not defined. The same convention is standard in the classical
theory of random polynomials; see, for example, the works of
Kabluchko-Zaporozhets and Nguyen-Vu
\cite{kabluchko2014asymptotic,nguyen2022roots}.

Our random polynomial universality theorem is the following.

\begin{thm}[Universality for random polynomials]
\label{thm: root correlation universality}
Let $K_1,\ldots,K_m$ be finite extensions of $\Q_p$, and let
$$
U\subseteq
\mc{O}_{K_1}^{\times,\new}
\times\cdots\times
\mc{O}_{K_m}^{\times,\new}
$$
be clopen. Suppose that
$$
P_n(x)=\xi_nx^n+\cdots+\xi_1x+\xi_0,
$$
where $\xi_0,\ldots,\xi_n$ are independent
$\epsilon$-balanced random variables in $\Z_p$. Then
$$
\lim_{n\to\infty}
\E\left[
Z_U(P_n)
\right]
=
\int_U
\rho_{K_1,\ldots,K_m}^{(\infty),\poly}
(x_1,\ldots,x_m)\,dx_1\cdots dx_m.
$$
Here, the integral is taken with respect to the product Haar probability
measure on
$\mc{O}_{K_1}\times\cdots\times\mc{O}_{K_m}$, and
$\rho_{K_1,\ldots,K_m}^{(\infty),\poly}$ is the stabilized Haar polynomial
correlation function introduced above.
\end{thm}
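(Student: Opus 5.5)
We will deduce \Cref{thm: root correlation universality} from the third part of \Cref{thm: resultant distribution method introduction}, applied to the ``unit part'' of $P_n$.

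\textbf{Step 1: reduction to a distinguished factor.} Since $U$ consists of roots of absolute value one, only the roots of $P_n$ in $\bigcup_{i\geq 2}\mc{U}_i$ matter. By compactness, $U$ lies in finitely many lifted subspaces: its projection to each $\mc{O}_{K_i}^{\times}$ has residues generating only finitely many monic irreducible $F_j$, $j\neq 1$. So we fix a finite $S\subseteq\Z_{>1}$ with $U\subseteq\prod_i\bigl(\mc{O}_{K_i}\cap\bigcup_{j\in S}\mc{U}_j\bigr)$. For $P_n\not\equiv 0$, write $P_n=p^{c}\,\widetilde{P}_n$ with $\widetilde{P}_n$ primitive. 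Hensel's lemma gives a factorization $\widetilde P_n=u\cdot P_n^S\cdot R$, where $P_n^S\in\mathscr{P}^S$ is monic with $\overline{P_n^S}=\prod_{j\in S}F_j^{e_j}$ and $e_j=\mathrm{ord}_{F_j}(\overline{\widetilde P_n})$. Then $Z_U(P_n)=Z_U(P_n^S)$. We do the same for $P_n^{\Haar}$, and identify the limit law $P_\infty^S$ with the Haar one by verifying all hypotheses for the Haar model too.

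\textbf{Step 2: convergence of resultant distributions.} For fixed monic $Z\in\mathscr{P}^S$ of degree $d$, we have $\val\Res(P_n^S,Z)=\val\Res(\widetilde P_n,Z)$, because the other factors are coprime to $Z$ mod $p$. Also $\Res(P_n,Z)=\pm\prod_{Z(\alpha)=0}P_n(\alpha)$ is the norm of $P_n\bmod Z$ in $\Z_p[x]/(Z)$. The map $(\xi_0,\dots,\xi_n)\mapsto P_n\bmod Z\in\Z_p[x]/(Z)\cong\Z_p^d$ is a sum of $n+1$ independent terms $\xi_k x^k\bmod Z$. Since $Z$ has roots of absolute value one, $x$ is a unit in $\Z_p[x]/(Z)$, so the residues $x^k\bmod Z$ are periodic in $k$ modulo each $p^N$. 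Hence among any window of $O_N(1)$ consecutive $k$ the monomials span $(\Z/p^N)[x]/(Z)$. A standard Fourier/equidistribution argument for sums of independent $\epsilon$-balanced steps (as in Wood's and Shmueli's arguments) then shows that $P_n\bmod (Z,p^N)$ converges to the uniform distribution on $(\Z/p^N)[x]/(Z)$ for each $N$. Passing from $P_n$ to $\widetilde P_n$ only costs the event $c\geq 1$, which has vanishing probability. So $\val\Res(P_n^S,Z)$ converges to the Haar limit, and both models have the same limit.

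\textbf{Step 3: degree moment tightness, the main obstacle.} We need
$$\lim_{D\to\infty}\sup_n\E\bigl[(\deg P_n^S)^m\mathbf 1_{\{\deg P_n^S>D\}}\bigr]=0.$$
Here $\deg P_n^S=\sum_{j\in S}e_j\deg F_j$, so it suffices to prove exponential tails for the events that $F_j^{e}$ divides $\overline{P_n}$ (with $c=0$) and $e$ is large. The plan is to bound $\mathbf P(F^e\mid \overline{P_n})$ uniformly in $n$, for $e$ up to order $\log n$ by the same equidistribution argument in $\F_p[x]/(F^e)$. The difficulty is the uniformity: the target size grows, so we need a quantitative mixing bound. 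The period of $x$ in $\F_p[x]/(F^e)$ is $O(p^{e\deg F})$, and a Fourier estimate gives
$$\mathbf P\bigl(F^e\mid\overline{P_n}\bigr)\le p^{-e\deg F}+\exp\bigl(-c\,\epsilon\, n/p^{e\deg F}\bigr)$$
for $e\deg F\le c'\log n$. For large $e$ we use the trivial bound $\deg P_n^S\le n$, together with the estimate at $e\approx c'\log n$, which gives probability $n^{-A}$. Choosing $c'$ so that $A>m$ handles the tail. This is the analogue of the growing-target surjection bounds used for matrices.

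\textbf{Step 4: conclusion.} The Haar limit $P_\infty^S$ is almost surely squarefree, since the discriminant locus has measure zero. The third part of \Cref{thm: resultant distribution method introduction} then gives
$$\lim_n\E[Z_U(P_n)]=\E[Z_U(P^S_\infty)]=\lim_n\E[Z_U(P_n^{\Haar})].$$
By Caruso's \cite[Theorem~5.8]{caruso2022zeroes} stabilization, the right-hand side equals $\int_U\rho^{(\infty),\poly}_{K_1,\ldots,K_m}$.
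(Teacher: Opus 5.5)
\textbf{Step 3 does not work as written.} Your overall architecture matches the paper's: restrict to an $S$-distinguished factor with $1\notin S$, prove Fourier equidistribution of $P_n \bmod (Z,p^k)$ using that $x$ is a unit, identify the limit with the Haar one, and finish with Caruso's stabilization. The problem is the tail estimate. Write $L=e\deg F$. Your bound $\mathbf P(F^e\mid\overline{P_n})\le p^{-L}+\exp(-c\epsilon n/p^{L})$ gives polynomial decay only when $p^{L}\lesssim n/\log n$. At your cutoff you need both $p^{-L}\le n^{-A}$ and $\exp(-c\epsilon n/p^{L})\le n^{-A}$. Together these say $n^{A}\le p^{L}\le c\epsilon n/(A\log n)$, which forces $A<1$. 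But you then bound the large-degree contribution by $n^m\cdot n^{-A}$, which needs $A>m\ge 1$. If you enlarge $c'$ to get this, $p^{L}$ exceeds $n$ and the error term $\exp(-c\epsilon n/p^{L})$ becomes of order $1$. So the two requirements on $c'$ are incompatible. As a result, the moment tightness needed by the expectation part of the resultant distribution method is not established. The loss comes from working with the period of $x$ in $(\F_p[x]/(F^e))^\times$. Since $x$ is a unit, any $L$ consecutive powers $x^r,\dots,x^{r+L-1}$ already form a basis. That improves the error to $\exp(-\epsilon\lfloor (n+1)/L\rfloor/p^2)$ and would allow a cutoff $L=K\log n$ with $K\log p>m+1$.

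\textbf{The paper's route avoids Fourier analysis here.} Take any $Q\in\F_p[x]$ with $\deg Q=L\le n+1$ and condition on $\bar\xi_L,\dots,\bar\xi_n$. Since $1,x,\dots,x^{L-1}$ is a basis of $\F_p[x]/(Q)$, the condition $Q\mid\bar P_n$ determines $(\bar\xi_0,\dots,\bar\xi_{L-1})$ uniquely, so $\mathbf P(Q\mid\bar P_n)\le(1-\epsilon)^{L}$. Apply this to $Q_{\mathbf r}=\prod_{i\in S}F_i^{r_i}$ with $R\le\deg Q_{\mathbf r}<R+\max_{i\in S}\deg F_i$. This gives $\mathbf P(\deg P_n^S\ge R,\ \bar P_n\neq 0)\le Ce^{-cR}$, uniformly in $n$ and for all $R$. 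The event $\bar P_n=0$ contributes at most $n^m(1-\epsilon)^{n+1}$. No growing-target estimate is needed.

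\textbf{Step 4 is also under-justified.} ``The discriminant locus has measure zero'' only helps if the limit law on each fixed-degree component is absolutely continuous with respect to Haar measure on $F_i^r+p\Z_p^{rd_i}$, where $d_i=\deg F_i$. Absolute continuity is not preserved under weak limits in general. The paper proves something exact: for every $n$, conditional on $\deg P_n^{\Haar,\{i\}}=rd_i$ and $\bar P_n^{\Haar}\neq 0$, the factor $P_n^{\Haar,\{i\}}$ is exactly Haar distributed on that coset. This holds because the Hensel multiplication map $(Q,R)\mapsto QR$ is a bijection with Jacobian $\pm\Res(Q,R)\in\Z_p^\times$. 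It is this exact identity of conditional laws, independent of $n$, that passes to the limit.
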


It is also worth mentioning the related work of He-Pham-Xu
\cite{he2023universality}, who proved universality results for low-degree
factors of random polynomials over the finite field $\F_p$. From this
perspective, \Cref{thm: root correlation universality} may be viewed as a
$p$-adic refinement of their finite-field universality phenomenon. 

The restriction to unit roots in \Cref{thm: root correlation universality} is necessary without further assumptions on
the coefficient distributions. For example, suppose that all the
coefficients lie in $\Z_p^\times$ almost surely. Dividing by the leading
coefficient gives a monic polynomial over $\Z_p$, so all of its roots are
algebraic integers. Since its constant coefficient is also a unit, the
product of the absolute values of its roots is $1$. Hence every root has
absolute value exactly $1$, and there are no roots in the open unit disk.
The Haar coefficient model, on the other hand, has roots in the open unit
disk with positive probability. Thus statistics away from the unit locus
can retain information about the coefficient distribution.

When $m=1$, $K_1=\Q_p$, and $U=\Z_p^\times$,
\Cref{thm: root correlation universality} recovers the universality
theorem of Shmueli
\cite[Theorem~1]{shmueli2023expected}. More generally, it extends
Caruso's Haar-model correlation formulas  \cite[Theorem 5.8]{caruso2022zeroes} to arbitrary independent $\epsilon$-balanced coefficients and to joint configurations of roots in finite extensions of $\Q_p$.

The proof of \Cref{thm: root correlation universality} again proceeds
through resultant distributions. For test polynomials whose constant terms
are $p$-adic units, we prove convergence of resultant valuations by a
Fourier-analytic argument building on Breuillard-Varj\'u
\cite{breuillard2019irreducibility}. A direct divisibility argument gives
the required moment tightness for the corresponding distinguished factors.
The resultant distribution method then yields convergence of the expected
root statistics.

The random matrix and random polynomial applications are therefore
parallel, but the inputs are rather different. For independent-coefficient
polynomials, the resultant becomes a sum of independent random variables
after reduction modulo the test polynomial, making Fourier analysis
effective. For characteristic polynomials of random matrices, the
coefficients are strongly dependent; instead,
\eqref{eq: intro resultant matrix identity} transforms the resultant into
the determinant of a polynomial evaluation of the matrix, bringing random
cokernels and surjection moments into the problem. The resultant
distribution method provides a common framework in which these two
different mechanisms lead to analogous universality conclusions.

\subsection{Further directions for universality}

We conclude by mentioning two directions in which the universality results
above may be extended. The first is to random matrix ensembles with
additional symmetry, such as symmetric or Hermitian matrices. On the cokernel side, universality is already known for several structured ensembles. Wood \cite{wood2017distribution} developed the surjection-moment method in the study of random symmetric matrices and sandpile groups; Nguyen-Wood \cite{nguyen2025local} established local and global cokernel
universality for random symmetric, skew-symmetric, and Laplacian matrices;
and Lee \cite{lee2023universality} proved the corresponding universality for
random $p$-adic Hermitian matrices. Sawin-Wood \cite{sawin2022moment} subsequently placed the surjection-moment method in a
general categorical framework. From the perspective of the resultant distribution
method, the main missing input is a symmetry-adapted analogue of the theorem
of Cheong-Yu \cite{cheong2023distribution}. Indeed, the identity
$$
\Res(P_A,Z)=\det(Z(A))
$$
reduces the problem to understanding the cokernel of $Z(A_n)$, rather than
only that of $A_n$. Once such a universality theorem and the corresponding
moment-tightness estimates are available, the present argument should in
principle apply without requiring an explicit computation of the limiting
Haar correlation functions.

A second direction is to allow the balancedness parameter to decay with the
matrix size or polynomial degree. For random matrices, one may take
$\epsilon=\epsilon_n\to0$ and ask for the sparsest regime in which the Haar
eigenvalue statistics remain universal. The analogous problem for cokernels
is much better understood: Nguyen-Wood \cite{nguyen2022random} established
universality in sparse regimes, while Jung-Lee-Yu
\cite{jung2026sharp} recently identified the sharp threshold over $\Z_p$ at
the scale
$$
\epsilon_n\asymp \frac{\log n}{n}.
$$
It is natural to ask whether eigenvalue universality exhibits the same
threshold or a genuinely different one. From the viewpoint of our method,
this would require a version of the Cheong-Yu theorem that remains uniform
as $\epsilon_n\to0$, together with sufficiently uniform surjection-moment
estimates to retain moment tightness.

There is a parallel question for random polynomials. The fixed-$\epsilon$
assumption in \Cref{thm: root correlation universality} is not intrinsic to
the Fourier-analytic argument, and with quantitative bookkeeping the same
method should extend to coefficient distributions with
$\epsilon=\epsilon_n\to0$, provided that $\epsilon_n$ decays sufficiently
slowly. Determining the optimal sparse regime, and the corresponding rates
of convergence, remains an interesting problem in both the matrix and
polynomial settings.

\subsection{Outline of the paper}

The remainder of the paper is organized as follows.

In \Cref{sec: Preliminaries}, we collect the basic algebraic and
measure-theoretic background used throughout the paper. We fix our
conventions for finite extensions of $\Q_p$ and their Haar probability
measures, recall the limiting eigenvalue correlation functions for additive
Haar random matrices from \cite{shen2026eigenvalues}, and review Hensel
factorization and the basic properties of the resultant.

In \Cref{sec: resultant distribution method}, we develop the resultant
distribution method in an abstract setting. We first show that the law of a
random $p$-adic polynomial is determined by the distributions of its
resultant valuations against fixed test polynomials. We then prove that
convergence of these resultant distributions, together with degree
tightness, yields weak convergence of the underlying random polynomials.
Finally, under moment tightness and almost-sure squarefreeness of the
limiting polynomial, we obtain convergence of the corresponding root
statistics and their expectations.

In \Cref{sec: The random matrix case}, we apply this framework to
characteristic polynomials of random matrices. The identity
$$
\Res(P_{A_n},Z)=\det(Z(A_n))
$$
converts resultant valuations into cokernel statistics of polynomial
evaluations of $A_n$. The universality theorem of Cheong-Yu then gives
convergence of the resultant distributions. We compare the resulting limit
with the additive Haar ensemble, establish the required squarefreeness of
the Haar limiting polynomial, and, modulo the moment-tightness estimate
proved in the following section, deduce
\Cref{thm: random matrix eigenvalue universality}. We also derive the
concrete consequences stated in
\Cref{thm: Zp first second moments universality} and \Cref{thm: quadratic extension universality}.

\Cref{sec: Moment Tightness} is devoted to the remaining moment-tightness
input for the random matrix argument. We prove a growing-target version of
the surjection-moment estimate for the random
$\F_p[t]$-module associated with $A_n\bmod p$, allowing the target dimension
to grow logarithmically with $n$. Fourier and combinatorial estimates then
yield quantitative tail bounds for the primary partitions of
$A_n\bmod p$, from which we deduce moment tightness for all fixed
distinguished degrees.

Finally, in \Cref{sec: random polynomial}, we apply the resultant
distribution method to random polynomials with independent
$\epsilon$-balanced coefficients. For test polynomials with unit constant
term, a Fourier-analytic argument shows that the resultant valuations
converge to the same limits as in the additive Haar coefficient model.
A direct divisibility argument supplies the corresponding moment-tightness
estimate. Combining these inputs with the general theory from
\Cref{sec: resultant distribution method} proves
\Cref{thm: root correlation universality}.

\subsection{Statement on the use of AI}

The main ideas and proof strategy of this paper were developed by the author,
and a complete draft was written before artificial intelligence tools were
used. AI tools were subsequently used to assist with a systematic review of
the manuscript, including organizational and editorial improvements,
language polishing, and checks of technical details. In particular, they
were used to help review the proof of moment tightness in \Cref{sec: Moment Tightness} and to
identify places where intermediate estimates or quantifiers could be stated
more explicitly. All mathematical claims and arguments were independently
checked by the author, who takes full responsibility for the accuracy and
content of the paper.

%% file: Preliminaries.tex
\section{Preliminaries}
\label{sec: Preliminaries}

In this section, we collect the basic notation and preliminary results used
throughout the paper. We first fix our conventions for finite extensions of
$\Q_p$, their rings of integers, and the associated Haar probability measures.
We then recall the limiting eigenvalue correlation functions for additive Haar
random matrices obtained in \cite{shen2026eigenvalues}. Finally, we record the
form of Hensel's lemma and the elementary properties of the resultant that
will be used repeatedly below.

\subsection{Finite extensions and Haar measure}

Throughout the paper, we fix a prime $p$ and an algebraic closure
$\overline{\Q}_p$ of $\Q_p$. Every finite extension of $\Q_p$ will be viewed
as a subfield of $\overline{\Q}_p$. We denote by
$$
\val:\overline{\Q}_p\longrightarrow\Q\cup\{\infty\}
$$
the extension of the $p$-adic valuation normalized by $\val(p)=1$, and write
$$
\lVert x\rVert:=p^{-\val(x)}.
$$

Let $K/\Q_p$ be a finite extension. We denote by $\mc{O}_K$ its ring of
integers and by $\mc{O}_K^\times$ its group of units. We further define
$$
K^{\new}
:=
\left\{
x\in K:\Q_p[x]=K
\right\},
$$
and set
$$
\mc{O}_K^{\new}
:=
K^{\new}\cap\mc{O}_K,
\qquad
\mc{O}_K^{\times,\new}
:=
K^{\new}\cap\mc{O}_K^\times.
$$
Thus $\mc{O}_K^{\new}$ consists of the algebraic integers that generate
$K$ over $\Q_p$.

The compact additive group $\mc{O}_K$ carries a unique Haar probability
measure, which we denote by $\mu_K$. Thus
$$
\mu_K(\mc{O}_K)=1.
$$
If $\pi_K$ is a uniformizer of $K$ and the residue field of $K$ has
cardinality $q_K$, then
$$
\mu_K(a+\pi_K^r\mc{O}_K)=q_K^{-r}
$$
for every $a\in\mc{O}_K$ and $r\geq 0$.

The complement of $\mc{O}_K^{\new}$ in $\mc{O}_K$ has Haar measure zero.
Indeed,
$$
\mc{O}_K\setminus\mc{O}_K^{\new}
=
\bigcup_{\Q_p\subseteq L\subsetneq K}\mc{O}_L,
$$
where the union is over the proper intermediate fields of $K/\Q_p$.
There are only finitely many such fields, and each $\mc{O}_L$ lies in a
proper $\Q_p$-linear subspace of $K$. Hence
$$
\mu_K(\mc{O}_K^{\new})=1.
$$

For finite extensions $K_1,\ldots,K_m$ of $\Q_p$, we equip
$$
\mc{O}_{K_1}\times\cdots\times\mc{O}_{K_m}
$$
with the product Haar probability measure
$$
\mu_{K_1}\otimes\cdots\otimes\mu_{K_m}.
$$
Accordingly, an integral written as
$$
\int_U
f(x_1,\ldots,x_m)\,dx_1\cdots dx_m
$$
is always understood with respect to this product measure.

Recall also that, for
$$
U\subseteq
\mc{O}_{K_1}^{\new}
\times\cdots\times
\mc{O}_{K_m}^{\new},
$$
the statistic $Z_U(P)$ counts tuples
$(x_1,\ldots,x_m)\in U$ of roots of $P$ lying in pairwise distinct
Galois orbits over $\Q_p$. Thus the notation records both the locations of
the roots and the finite extensions of $\Q_p$ that they generate.

\subsection{Haar random matrices and eigenvalue correlations}
\label{subsec: Haar random matrix correlations}

We next recall the limiting eigenvalue correlation functions for additive
Haar random matrices obtained in \cite{shen2026eigenvalues}. In this
subsection, we record only those results from \cite{shen2026eigenvalues}
that will be used in the present paper. Their derivation will be treated
entirely as a black box here: we do not repeat the finite-dimensional
correlation formulas or the arguments leading to their limiting form.
Readers interested in the construction and explicit analysis of these
correlation functions are referred to \cite{shen2026eigenvalues}.

For every
$n\geq 1$, let
$$
A_n^{\Haar}\in\Mat_n(\Z_p)
$$
be a random matrix whose entries are independent and distributed according
to additive Haar probability measure on $\Z_p$, and write
$$
P_{A_n^{\Haar}}(x)
:=
\det(xI_n-A_n^{\Haar})
$$
for its characteristic polynomial.

The following result is the only input from the explicit Haar theory that
we will need in the general universality argument.

\begin{thm}[Limiting eigenvalue correlations for Haar random matrices]
\label{thm: Haar random matrix correlations}
Let $K_1,\ldots,K_m$ be finite extensions of $\Q_p$. There exists a
nonnegative function
$$
\rho_{K_1,\ldots,K_m}^{(\infty)}:\mc{O}_{K_1}^{\new}\times\cdots\times\mc{O}_{K_m}^{\new}\longrightarrow\R_{\geq 0}
$$
such that, for every measurable set
$$
U\subseteq
\mc{O}_{K_1}^{\new}
\times\cdots\times
\mc{O}_{K_m}^{\new},
$$
we have
$$
\lim_{n\to\infty}\E\left[Z_U(P_{A_n^{\Haar}})\right]=\int_U\rho_{K_1,\ldots,K_m}^{(\infty)}(x_1,\ldots,x_m)\,dx_1\cdots dx_m.
$$
\end{thm}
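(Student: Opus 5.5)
The statement is imported from \cite{shen2026eigenvalues}, so within this paper it is a black box. If I had to reprove it, I would proceed in three steps: a Kac--Rice type finite-$n$ density, a stabilization as $n\to\infty$, and an extension from balls to measurable sets.

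\emph{Step 1: an exact finite-$n$ density.} For each fixed $n$, I would write $\E[Z_U(P_{A_n^{\Haar}})]$ as $\int_U\rho^{(n)}_{K_1,\ldots,K_m}\,dx_1\cdots dx_m$ using a $p$-adic area (Kac--Rice) formula. Consider the map sending $(x_1,\ldots,x_m)$ together with eigenvectors and a complementary block to $A_n$. For $x_i\in\mc{O}_{K_i}^{\new}$ with minimal polynomials $f_i$ of degree $d_i$, an eigenvector for $x_i$ and its Galois conjugates spans a $d_i$-dimensional $\Q_p$-invariant subspace. Hence $A_n$ is conjugate to a block upper-triangular matrix with diagonal blocks equal to companion-type matrices of $f_1,\ldots,f_m$ and a complementary block $A'$ of size $n-r$, where $r=\sum d_i$. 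The Jacobian of this parametrization is a product of two kinds of factors. The first is resultant-type factors $\lVert\Res(f_i,f_j)\rVert$ and $\lVert\Disc f_i\rVert$. The second is $\lVert\det f_i(A')\rVert$, which equals $\lVert\Res(P_{A'},f_i)\rVert$. The resulting density has the form
\[
\rho^{(n)}(x_1,\ldots,x_m)=c_n(K_\bullet)\,\prod_{i<j}\lVert\Res(f_i,f_j)\rVert\prod_i\lVert\Disc f_i\rVert^{\alpha_i}\;\E\Bigl[\prod_i\lVert\det f_i(A')\rVert\Bigr],
\]
where $A'$ is Haar on $\Mat_{n-r}(\Z_p)$. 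Here $c_n$ collects the volume of the lattice/flag data, which is computable via counts of $\GL_n(\Z_p)$-orbits.

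\emph{Step 2: limit $n\to\infty$.} The quantity $\lVert\det f_i(A')\rVert$ equals $\#\Cok(f_i(A'))^{-1}$, or it vanishes. So the expectation depends only on the joint law of the cokernels of $f_i(A')$. By Cohen--Lenstra/Friedman--Washington type results, that law converges as $n\to\infty$: the $\Z_p[t]$-module $\Cok(A'-t)$ localized at the relevant primes has a limiting law, and the factors for distinct residue classes become independent. Together with the explicit convergence of $c_n$, this gives a pointwise limit $\rho^{(\infty)}$. To pass the limit under the integral I would use dominated convergence. The integrand is bounded by $c_n$ times the resultant factors, which are integrable, and $c_n$ is bounded uniformly in $n$; one can also note that $\lVert\cdot\rVert\le1$.

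\emph{Step 3: from balls to measurable $U$.} Steps 1--2 give the identity for every $U$ that is a finite union of products of balls, hence for every clopen $U$. Both sides of the identity are finite measures in $U$: the left side is a limit of the measures $U\mapsto\E[Z_U]$. To extend to all measurable $U$, I would apply a monotone class argument. This needs convergence that is uniform in $U$, and that uniformity comes from the dominated convergence bound at the density level, since it gives $L^1$ convergence $\rho^{(n)}\to\rho^{(\infty)}$.

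\emph{Main obstacle.} I expect Step 1 to be the hardest: computing the Jacobian of the block-triangularization correctly over ramified and unramified $K_i$, including the normalization $c_n$. The asymptotics in Step 2 are then comparatively standard.
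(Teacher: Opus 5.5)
Your first sentence is the paper's entire proof. The statement is \cite[Theorem~6.2]{shen2026eigenvalues} read through the dictionary $E=K_1\times\cdots\times K_m$, $\rho_E^{(\infty)}=\rho^{(\infty)}_{K_1,\ldots,K_m}$ and $Z_{U,n}=Z_U(P_{A_n^{\Haar}})$ (eigenvalues are the roots of the characteristic polynomial), with integration against $\mu_{K_1}\otimes\cdots\otimes\mu_{K_m}$. Nothing further is proved in the paper.

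The reconstruction you add is not in the paper, and as written its Step~1 is wrong. A parametrization that preserves Haar measure on $\Mat_n(\Z_p)$ must use $\GL_n(\Z_p)$-conjugation. The restriction of $A_n$ to the saturated lattice $\ker f_i(A_n)\cap\Z_p^n$ is then an arbitrary $B_i\in\Mat_{d_i}(\Z_p)$ with $P_{B_i}=f_i$, not a companion matrix. Such $B_i$ form one $\GL_{d_i}(\Z_p)$-orbit per isomorphism class of $\Z_p[x_i]$-lattices in $K_i$, so there are several orbits whenever $\Z_p[x_i]\neq\mc{O}_{K_i}$.

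The density therefore contains, for each $i$, the volume $\omega_{d_i}(f_i)$ of this fibre. This is the density at $f_i$ of the characteristic polynomial of a Haar matrix in $\Mat_{d_i}(\Z_p)$, i.e.\ an orbital integral. It depends on $x_i$, it is not a power of $\lVert\Disc f_i\rVert$, and it cannot be absorbed into $c_n$. For example, take $p$ odd, $\delta\in\Z_p^\times$ a nonsquare, and $f=t^2-p^{2c}\delta$, the minimal polynomial of $x=p^c\sqrt{\delta}$. Counting solutions of $\alpha^2+\beta\gamma\equiv p^{2c}\delta\pmod{p^k}$ for $k>2c$ gives $\omega_2(f)=1+p^{-1}-2p^{-c-1}$, while $\lVert\Disc f\rVert=p^{-2c}$.

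The skeleton survives this correction. Parametrize by the invariant lattice $W$, the block $A_n|_W$, the extension data and $A'$. The Jacobian is $\lVert\Res(f,P_{A'})\rVert=\lVert\det f(A')\rVert$, so $\rho^{(n)}=c_n\,w\,\E\lVert\det f(A'_{n-r})\rVert$. Here $w$ is an $n$-independent integrable weight (the root-coordinate density of the characteristic polynomial of a Haar matrix in $\Mat_r(\Z_p)$), and $\sup_n c_n<\infty$. With this, your dominated-convergence and $L^1$ arguments in Steps~2--3 go through.

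One further point in Step~2: Friedman--Washington covers only $\deg f=1$. For general $f$, use \cite{cheong2023distribution} or the primary decomposition of \Cref{lem: haar matrix limit squarefree}. Only the bounded functional $\E[\#\Cok(f(A'))^{-1}]$ is needed, so convergence in distribution of the cokernel suffices.
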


\begin{proof}
This is a direct reformulation of
\cite[Theorem~6.2]{shen2026eigenvalues}. We explain the translation of
notation.

In \cite{shen2026eigenvalues}, for fixed finite extensions
$K_1,\ldots,K_m$ of $\Q_p$, the product
$$
K_1\times\cdots\times K_m
$$
is denoted by $E$, and the limiting correlation function is written as
$\rho_E^{(\infty)}$. As noted there, this is the same function that we
denote in the present paper by $\rho_{K_1,\ldots,K_m}^{(\infty)}$.

Moreover, the random variable $Z_{U,n}$ appearing in
\cite[Theorem~6.2]{shen2026eigenvalues} counts tuples
$(x_1,\ldots,x_m)\in U$ of eigenvalues of an $n\times n$ additive Haar
random matrix, with the coordinates lying in pairwise distinct Galois
orbits. Since the eigenvalues of $A_n^{\Haar}$ are precisely the roots of
its characteristic polynomial
$$
P_{A_n^{\Haar}}(x)=\det(xI_n-A_n^{\Haar}),
$$
this is exactly the random variable $Z_U(P_{A_n^{\Haar}})$ in our notation.

Finally, the integration in \cite[Theorem~6.2]{shen2026eigenvalues} is
with respect to the product of the additive Haar probability measures on
$\mc{O}_{K_1},\ldots,\mc{O}_{K_m}$, which agrees with our convention.
Therefore that theorem gives
$$
\lim_{n\to\infty}
\E\left[
Z_U(P_{A_n^{\Haar}})
\right]
=
\int_U
\rho_{K_1,\ldots,K_m}^{(\infty)}
(x_1,\ldots,x_m)
\,dx_1\cdots dx_m,
$$
as claimed.
\end{proof}

The function
$\rho_{K_1,\ldots,K_m}^{(\infty)}$ will serve as the reference limiting
eigenvalue statistic throughout the paper. In \cite[Section~9]{shen2026eigenvalues}, several of these limiting
correlation functions were evaluated explicitly in low-degree cases.
The present paper does not require their general explicit form. Our goal is instead to show that the same functions govern
the limiting eigenvalue statistics of random matrices with general
independent $\epsilon$-balanced entries.

\subsection{Hensel's lemma and the resultant}
\label{subsec:zpbar}

We now recall the algebraic facts about polynomial factorization and
resultants that will be used throughout the paper.

The following is the strong form of Hensel's lemma.

\begin{lemma}[Hensel's lemma]
\label{lem: Hensel}
Let $Z(x)\in\Z_p[x]$, and let $\overline{Z}(x)\in\F_p[x]$ denote its
reduction modulo $p$. Suppose that
$$
\overline{Z}(x)
=
\overline{Z}_1(x)\overline{Z}_2(x),
$$
where $\overline{Z}_1,\overline{Z}_2\in\F_p[x]$ are relatively prime.
Then $Z$ admits a factorization
$$
Z(x)=Z_1(x)Z_2(x)
$$
with $Z_1,Z_2\in\Z_p[x]$ whose reductions modulo $p$ are
$\overline{Z}_1$ and $\overline{Z}_2$, respectively.
\end{lemma}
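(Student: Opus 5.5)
We prove the strong form of Hensel's lemma by a Newton-type successive approximation, constructing lifts modulo $p^k$ for every $k$ and passing to the limit by compactness. The case where $Z$ is not monic needs care. We handle it by normalizing so that $\overline{Z}_1$ is monic with $\deg Z_1=\deg\overline{Z}_1$, and we absorb all of the leading-coefficient defect into $Z_2$. If $\overline{Z}=0$, then by coprimality one factor, say $\overline{Z}_2$, is $0$ and the other, $\overline{Z}_1$, is a unit. In that case we take $Z_1$ to be a constant unit lift of $\overline{Z}_1$ and set $Z_2=Z_1^{-1}Z$. So we may assume $\overline{Z}\neq 0$.

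\textbf{Setup.} Choose lifts $G_1,H_1\in\Z_p[x]$ of $\overline{Z}_1,\overline{Z}_2$, with $G_1$ monic of degree $d:=\deg\overline{Z}_1$ after rescaling both factors by a unit. Since $\overline{Z}_1,\overline{Z}_2$ are coprime in $\F_p[x]$, there are $A,B\in\Z_p[x]$ with $AG_1+BH_1\equiv 1\pmod p$. We then have $Z\equiv G_1H_1\pmod p$.

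\textbf{Inductive step.} Suppose we have $G_k$ monic of degree $d$ and $H_k$ with
$$Z-G_kH_k=p^kE_k,\qquad G_k\equiv G_1,\ H_k\equiv H_1 \pmod p.$$
We seek corrections $G_{k+1}=G_k+p^ku$ and $H_{k+1}=H_k+p^kv$ with $\deg u<d$. It suffices to solve
$$uH_k+vG_k\equiv E_k\pmod p.$$
Start from $E_kBH_k+E_kAG_k\equiv E_k\pmod p$. Divide $E_kB$ by the monic polynomial $G_k$, writing $E_kB=qG_k+u$ with $\deg u<d$; division is legitimate over $\Z_p$ because $G_k$ is monic. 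Then set $v=E_kA+qH_k$, which gives the desired congruence. With these choices,
$$Z-G_{k+1}H_{k+1}=p^k\left(E_k-uH_k-vG_k\right)-p^{2k}uv\equiv 0\pmod{p^{k+1}}.$$

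\textbf{Passage to the limit.} The sequence $G_k$ is Cauchy coefficientwise in a fixed finite-dimensional space, since all $G_k$ have degree $d$. The polynomials $H_k$ need a degree bound to converge. We keep the $v$'s of bounded degree by reducing modulo $p$, and note that the top-degree terms of $H_k$ are forced modulo $p^k$ by $Z=G_kH_k\pmod{p^k}$ with $G_k$ monic. Alternatively, and more cleanly, we define $H_k$ as the quotient of $Z$ by $G_k$ in the division algorithm; this is legitimate because $G_k$ is monic. Either way, the $G_k$ and $H_k$ converge to $Z_1$ and $Z_2$ with $Z=Z_1Z_2$, and $Z_1,Z_2$ reduce modulo $p$ to $\overline{Z}_1,\overline{Z}_2$.

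\textbf{Main obstacle.} The only delicate point is this degree bookkeeping for a non-monic $Z$, which ensures that the limit exists in $\Z_p[x]$. The key is keeping $G_k$ monic of the fixed degree $d$, so that division by $G_k$ is always possible. The coprimality, which supplies the Bezout relation modulo $p$, is what drives every inductive step.
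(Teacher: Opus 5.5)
Your proposal is correct and follows essentially the same route as the paper, whose proof simply cites Neukirch, Theorem~II.4.6. That theorem is proved by the same successive approximation: a Bezout relation modulo $p$, corrections obtained by dividing by a lift of $\overline{Z}_1$ with unit leading coefficient, and degree control on the second factor by discarding coefficients divisible by $p$, which is your first option; your quotient-by-$G_k$ variant is an equally valid bookkeeping. Your separate treatment of $\overline{Z}=0$ is a worthwhile addition, because the statement as written drops the primitivity hypothesis ($\overline{Z}\neq 0$) under which Neukirch's theorem is stated.
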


\begin{proof}
See \cite[Theorem~II.4.6]{neukirch2013algebraic}.
\end{proof}

Let
$$
Z_1(x)
=
a_{r_1}x^{r_1}+\cdots+a_0,
\qquad
Z_2(x)
=
b_{r_2}x^{r_2}+\cdots+b_0
$$
be nonzero polynomials in $\Z_p[x]$ of degrees $r_1$ and $r_2$,
respectively. Let
$x_{i,1},\ldots,x_{i,r_i}\in\overline{\Q}_p$ denote the roots of $Z_i$,
counted with multiplicity. Their \emph{resultant} is
$$
\Res(Z_1,Z_2)=a_{r_1}^{r_2}b_{r_2}^{r_1}
\prod_{l_1=1}^{r_1}
\prod_{l_2=1}^{r_2}
(x_{1,l_1}-x_{2,l_2})
\in\Z_p.
$$
We also set $\Res(Z_1,1)=\Res(1,Z_2)=1$, and
$$\Res(Z_1,0)=\Res(0,Z_2)=0$$
whenever $Z_1,Z_2$ are non-constant.

The resultant records whether two polynomials have a common root and,
more generally, the $p$-adic distances between their roots. It also admits
a useful linear-algebraic interpretation. If $Z_1$ is monic, then
$\Z_p[x]/(Z_1)$ is a finite free $\Z_p$-module and
$$
\Res(Z_1,Z_2)
=
\det\left(
m_{Z_2}:
\Z_p[x]/(Z_1)
\longrightarrow
\Z_p[x]/(Z_1)
\right),
$$
where $m_{Z_2}$ denotes multiplication by $Z_2$.

We record the standard properties of the resultant that will be used below.

\begin{prop}
\label{prop: properties of resultant}
Let $Z_1,Z_2,Z_3\in\Z_p[x]$. Then:
\begin{enumerate}
\item
$\Res(Z_1,Z_2)=(-1)^{\deg Z_1\deg Z_2}\Res(Z_2,Z_1)$.

\item
$\Res(Z_1,Z_2)=0$ if and only if $Z_1$ and $Z_2$ have a common root.

\item
We have
$$
\Res(Z_1Z_2,Z_3)=\Res(Z_1,Z_3)\Res(Z_2,Z_3),
$$
and
$$
\Res(Z_1,Z_2Z_3)=\Res(Z_1,Z_2)\Res(Z_1,Z_3).
$$

\item
If $Z_1$ is monic, then
$$
\Res(Z_1,Z_3)
=
\Res(Z_1,Z_3-Z_1Z_2).
$$
\end{enumerate}
\end{prop}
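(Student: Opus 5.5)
The plan is to derive properties (1)--(3) directly from the root-product definition of $\Res$, and property (4) from an equivalent formula that evaluates one polynomial at the roots of the other. Throughout, write $a,b,c$ for the leading coefficients of $Z_1,Z_2,Z_3$ and $r_1,r_2,r_3$ for their degrees. I would first handle the degenerate cases separately. If one of the polynomials is $0$, both sides of each identity vanish by the stated convention, and (2) holds trivially because every point is a root of $0$. If one of the polynomials is a nonzero constant, the root products are empty. Here I would check that the root formula $a^{r_2}b^{r_1}\prod(\cdots)$ is consistent with the convention $\Res(Z_1,1)=1$, and treat a general nonzero constant $c$ as giving $c^{r_1}$ (resp.\ $c^{r_2}$). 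After this, all polynomials may be assumed nonconstant and nonzero.

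\textbf{Properties (1)--(3).} For (1), exchanging the roles of $Z_1$ and $Z_2$ leaves the prefactor $a^{r_2}b^{r_1}$ unchanged. It also replaces each of the $r_1r_2$ factors $(x_{1,l_1}-x_{2,l_2})$ by its negative, which produces the sign $(-1)^{r_1r_2}$. For (2), the prefactor $a^{r_2}b^{r_1}$ is nonzero. The product of differences therefore vanishes exactly when some root of $Z_1$ coincides with some root of $Z_2$ in $\overline{\Q}_p$. For (3), the roots of $Z_1Z_2$ form the multiset union of the roots of $Z_1$ and of $Z_2$, its leading coefficient is $ab$, and its degree is $r_1+r_2$. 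The prefactor $(ab)^{r_3}c^{r_1+r_2}$ then splits as $a^{r_3}c^{r_1}\cdot b^{r_3}c^{r_2}$, and the double product splits accordingly. The second identity in (3) follows from the first together with (1), since the signs combine as $(-1)^{r_1(r_2+r_3)}=(-1)^{r_1r_2}(-1)^{r_1r_3}$.

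\textbf{Property (4).} First I would establish the evaluation formula: if $Z_1$ is monic with roots $\alpha_1,\ldots,\alpha_{r_1}$, then
$$
\Res(Z_1,Z_3)=\prod_{i=1}^{r_1}Z_3(\alpha_i).
$$
To see this, factor $Z_3(x)=c\prod_j(x-\beta_j)$. Then
$$
\prod_i Z_3(\alpha_i)=c^{r_1}\prod_{i,j}(\alpha_i-\beta_j),
$$
which is exactly the definition of $\Res(Z_1,Z_3)$ with $a=1$. The key point is that the right-hand side does not refer to the degree or leading coefficient of $Z_3$. By the convention above, the formula also covers constant $Z_3$, and it covers $Z_3=0$, where both sides are $0$. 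Since $Z_1(\alpha_i)=0$ for every $i$, we have $(Z_3-Z_1Z_2)(\alpha_i)=Z_3(\alpha_i)$, and (4) follows immediately.

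The only real obstacle is bookkeeping in (4). The polynomial $Z_3-Z_1Z_2$ can have a different degree and leading coefficient than $Z_3$, and it may even be $0$ or a constant. A naive comparison of the root formulas would therefore fail, which is why I would route the argument through the evaluation formula. As an alternative, I would use the determinant description $\Res(Z_1,Z_2)=\det(m_{Z_2})$ on $\Z_p[x]/(Z_1)$ recorded just before the proposition. Multiplication by $Z_3$ and by $Z_3-Z_1Z_2$ define the same endomorphism of $\Z_p[x]/(Z_1)$, so their determinants agree.
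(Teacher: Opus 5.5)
The paper does not prove this proposition. It records the four items as standard facts, and the determinant description $\Res(Z_1,Z_2)=\det(m_{Z_2})$ on $\Z_p[x]/(Z_1)$, stated just before it, is also given without proof. Your derivation from the root-product definition is correct and is the standard one.

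For (4), routing the argument through the evaluation formula $\Res(Z_1,Z_3)=\prod_i Z_3(\alpha_i)$ for monic $Z_1$ is the right choice. It avoids tracking the change of degree and leading coefficient in passing from $Z_3$ to $Z_3-Z_1Z_2$. It is also exactly the form of the resultant the paper uses later, in the proof of \Cref{lem: resultant characteristic polynomial matrix evaluation}. Your determinant alternative works too, but it relies on an identity the paper never proves, so the evaluation-formula route is more self-contained.

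The one thing to fix is your treatment of the zero polynomial. It is not true that both sides vanish whenever one polynomial is $0$. Under the paper's conventions, $\Res(0,1)=\Res(1,0)=1$. Meanwhile $\Res(c,0)$ for a nonzero constant $c\neq 1$, and $\Res(0,0)$, are not defined at all, because $\Res(Z_1,0)=0$ is only stipulated for nonconstant $Z_1$. For example, with $Z_1=1$ and $Z_2=0$, item (2) holds because $\Res(1,0)=1$ and $1$ has no roots, not because every point is a root of $0$. This is an ambiguity in the statement's conventions, not a gap in your argument. Restrict the zero cases to nonconstant partners, where your reasoning is right, and check the cases involving the constant $1$ directly from the convention.
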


The following elementary consequence will be particularly useful when we
separate roots lying in different lifted subspaces.

\begin{prop}
\label{prop: relatively prime resultant}
Suppose that $Z_1,Z_2\in\Z_p[x]$, at least one of which is monic. If their
reductions
$\overline{Z}_1,\overline{Z}_2\in\F_p[x]$ are relatively prime, then
$$
\val\left(\Res(Z_1,Z_2)\right)=0.
$$
\end{prop}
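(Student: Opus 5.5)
We need to prove \Cref{prop: relatively prime resultant}: if $Z_1,Z_2\in\Z_p[x]$, one of them monic, have coprime reductions modulo $p$, then $\val(\Res(Z_1,Z_2))=0$. Since $\Res(Z_1,Z_2)\in\Z_p$, it suffices to show that $\Res(Z_1,Z_2)$ is a unit. By part (1) of \Cref{prop: properties of resultant}, swapping the arguments changes only the sign, so we may assume $Z_1$ is monic. (If $Z_1$ is constant, it equals $1$ and the resultant is $1$ by convention.)

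We use the linear-algebraic description. The ring $R:=\Z_p[x]/(Z_1)$ is a free $\Z_p$-module of rank $r_1=\deg Z_1$, and
$$
\Res(Z_1,Z_2)=\det(m_{Z_2}:R\to R).
$$
A determinant of a $\Z_p$-endomorphism of a finite free module is a unit exactly when its reduction modulo $p$ is nonzero. Reducing modulo $p$ gives $R/pR\cong\F_p[x]/(\overline{Z}_1)$, because $Z_1$ is monic and so $\overline{Z}_1$ has the same degree $r_1$. Under this identification, $m_{Z_2}$ reduces to multiplication by $\overline{Z}_2$ on $\F_p[x]/(\overline{Z}_1)$, and the determinant reduces to $\det(m_{\overline{Z}_2})$.

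Since $\overline{Z}_1$ and $\overline{Z}_2$ are coprime in the PID $\F_p[x]$, there are $u,v\in\F_p[x]$ with $u\overline{Z}_1+v\overline{Z}_2=1$. Hence $\overline{Z}_2$ is invertible in $\F_p[x]/(\overline{Z}_1)$, with inverse $v$. Multiplication by an invertible element is an invertible $\F_p$-linear map, so its determinant is nonzero. It follows that $\Res(Z_1,Z_2)\in\Z_p^\times$, that is, $\val(\Res(Z_1,Z_2))=0$.

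One edge case needs separate treatment: $\overline{Z}_2=0$, which is allowed when $Z_2$ is not monic. Coprimality then forces $\overline{Z}_1$ to be a unit, so $r_1=0$ and the resultant is $1$. Apart from this, the only step requiring care is the compatibility of the determinant with reduction modulo $p$. This holds because the monomial basis $1,x,\ldots,x^{r_1-1}$ of $R$ reduces to a basis of $R/pR$, so the matrix of $m_{Z_2}$ in this basis reduces entrywise to the matrix of $m_{\overline{Z}_2}$.
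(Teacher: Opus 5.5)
Your proof is correct and matches the paper's argument: for monic $Z_1$ you identify $\Res(Z_1,Z_2)$ with $\det(m_{Z_2})$ on $\Z_p[x]/(Z_1)$, reduce modulo $p$ to multiplication by the invertible class of $\overline{Z}_2$ in $\F_p[x]/(\overline{Z}_1)$, and treat a monic $Z_2$ by the symmetry in \Cref{prop: properties of resultant}. Your remarks on the degenerate cases and on reducing the matrix entrywise in the monomial basis are accurate details that the paper leaves implicit.
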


\begin{proof}
Suppose first that $Z_1$ is monic, and let $d:=\deg Z_1$. Multiplication by
$Z_2$ defines a $\Z_p$-linear map
$$
m_{Z_2}:
\Z_p[x]/(Z_1)
\longrightarrow
\Z_p[x]/(Z_1),
$$
and
$$
\Res(Z_1,Z_2)=\det(m_{Z_2}).
$$
After reduction modulo $p$, this becomes multiplication by
$\overline{Z}_2$ on $\F_p[x]/(\overline{Z}_1)$. Since
$\overline{Z}_1$ and $\overline{Z}_2$ are relatively prime, the class of
$\overline{Z}_2$ is invertible in this quotient. Hence the reduced map is
invertible and
$$
\det(m_{Z_2})\not\equiv 0\pmod p.
$$
Therefore $\Res(Z_1,Z_2)\in\Z_p^\times$, proving the claim.

If instead $Z_2$ is monic, the result follows by symmetry from
\Cref{prop: properties of resultant}.
\end{proof}

%% file: The_resultant_distribution_method.tex
\section{The resultant distribution method} \label{sec: resultant distribution method} 
In this section, we develop the resultant distribution method introduced in
\Cref{sec: introduction}. Its purpose is to recover the law, or the limiting
law, of a random $p$-adic polynomial from the distributions of its resultants
against a sufficiently rich family of fixed test polynomials. The arguments in this section are formulated at an abstract level and do not depend on the specific random polynomial or random matrix models considered later.

Throughout this section, we fix a finite set
$S\subset\Z_{>0}$, and retain the notation $\mathscr{P}^S$ introduced in \Cref{sec: introduction} for the space of monic polynomials over $\Z_p$ whose roots lie in the lifted subspaces indexed by $S$. For every $d\geq 0$, let $\mathscr{P}_d^S$ denote the subset consisting of polynomials of degree $d$, and equip
$$
\mathscr{P}^S=\bigsqcup_{d\geq 0}\mathscr{P}_d^S
$$
with the disjoint-union topology induced by the natural coefficient topology
on each fixed-degree component.

The goal of this section is to establish the three assertions stated in
\Cref{thm: resultant distribution method introduction} in precise form. We
first prove that the law of a random polynomial in $\mathscr{P}^S$ is
determined by its resultant-valuation distributions. We then show that
convergence of these distributions, together with tightness of the degrees,
implies weak convergence of the random polynomials. Finally, under a stronger
moment-tightness assumption, we upgrade this weak convergence to convergence
of expected root statistics. Accordingly, we formulate and prove these three assertions as
\Cref{thm: resultant distribution determination introduction},
\Cref{thm: resultant distribution convergence introduction}, and
\Cref{thm: expected root statistics convergence}, respectively. We begin with
the determination theorem.

\begin{thm}[Determination by resultant distributions]
\label{thm: resultant distribution determination introduction}
Let $P^S_1$ and $P^S_2$ be random elements of $\mathscr{P}^S$. Suppose that, for every fixed polynomial $Z\in\mathscr{P}^S$, we have
$$
\val\bigl(\Res(P^S_1,Z)\bigr)\overset{d}{=}\val\bigl(\Res(P^S_2,Z)\bigr).
$$
Then $P^S_1\overset{d}{=}P^S_2$ as random elements of $\mathscr{P}^S$.
\end{thm}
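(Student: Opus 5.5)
The plan has two steps. First, upgrade the hypothesis on single marginals to equality of joint laws of finitely many resultant valuations, using multiplicativity of the resultant. Second, show that the countable family of such valuations separates points of $\mathscr{P}^S$, and conclude by a standard measure-theoretic argument.

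\emph{Step 1 (from marginals to joint laws).} Fix $Z_1,\ldots,Z_k\in\mathscr{P}^S$ and write $v_j^{(i)}:=\val(\Res(P_i^S,Z_j))\in\Z_{\ge0}\cup\{\infty\}$. For $a_1,\ldots,a_k\geq0$ we have $Z_1^{a_1}\cdots Z_k^{a_k}\in\mathscr{P}^S$. By \Cref{prop: properties of resultant}(3),
$$
\val\Bigl(\Res\bigl(P_i^S,Z_1^{a_1}\cdots Z_k^{a_k}\bigr)\Bigr)=\sum_j a_jv_j^{(i)}.
$$
The hypothesis therefore gives, for every $x\in(0,1)$,
$$
\E\Bigl[\prod_j (x^{a_j})^{v_j^{(1)}}\Bigr]=\E\Bigl[\prod_j (x^{a_j})^{v_j^{(2)}}\Bigr],
$$
with the convention $y^\infty=0$. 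The function $G_i(y_1,\ldots,y_k):=\E[\prod_j y_j^{v_j^{(i)}}]$ is continuous on $(0,1)^k$. The points $(x^{a_1},\ldots,x^{a_k})$, with $x\in(0,1)$ and $a_j\in\Z_{\ge0}$, are dense in $(0,1)^k$: they sweep out the curves $\log y_j/\log y_1=a_j/a_1$. Hence $G_1=G_2$ on $(0,1)^k$. Since $G_i$ is a power series in $y$ with nonnegative coefficients $\mathbf P(v^{(i)}=n)$ for $n\in\Z_{\ge0}^k$, this yields equality of the laws of $(v_1^{(i)},\ldots,v_k^{(i)})$ restricted to finite values. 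Letting $y_j\uparrow1$ one coordinate at a time, together with inclusion--exclusion, recovers the masses at $\infty$ as well. So the joint laws of the vectors $(v_j^{(1)})_j$ and $(v_j^{(2)})_j$ agree.

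\emph{Step 2 (separation).} Fix a countable dense set $\mathcal Z\subseteq\mathscr{P}^S$, for instance the polynomials with coefficients in $\Z$ lying in $\mathscr{P}^S$. Consider the Borel map
$$
\Phi:\mathscr{P}^S\to(\Z_{\ge0}\cup\{\infty\})^{\mathcal Z},\qquad \Phi(P)=\bigl(\val\Res(P,Z)\bigr)_{Z\in\mathcal Z}.
$$
By Step 1 and Kolmogorov extension, $\Phi(P_1^S)\overset{d}{=}\Phi(P_2^S)$. Since $\mathscr{P}^S$ is Polish, it suffices to show $\Phi$ is injective: the Lusin--Souslin theorem then makes $\Phi$ a Borel isomorphism onto its image, and the laws of $P_1^S$ and $P_2^S$ coincide.

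For injectivity, recall that $\Res(P,Z)=\prod_{\alpha}Z(\alpha)$ over the roots $\alpha$ of $P$. Fix a monic irreducible $g$ with roots in $\bigcup_{i\in S}\mc U_i$, so $g\in\mathscr{P}^S$, and perturb it to $g_k:=g+p^kh$ with $\deg h<\deg g$ chosen so that $g_k$ stays in $\mathscr{P}^S$ (and can be approximated within $\mathcal Z$ without changing finite valuations, since $\val\Res(P,\cdot)$ is locally constant where finite). I claim that as $k\to\infty$,
$$
\val\Res(P,g_k)=m_g(P)\cdot\deg g\cdot k+O_P(1),
$$
where $m_g(P)$ is the multiplicity of $g$ in $P$. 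Indeed, roots of $P$ that are not roots of $g$ contribute a bounded amount, while each root $\alpha$ of $g$ gives $\val g_k(\alpha)=k+\val h(\alpha)$. Consequently $\Phi(P)$ determines every multiplicity $m_g(P)$, hence the factorization of $P$, hence $P$ itself.

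I expect this injectivity step to be the main obstacle. The delicate points are choosing the perturbations $h$ so that $g_k\in\mathscr{P}^S$ and $\val h(\alpha)$ is finite, which holds when $h$ is coprime to $g$ (for example $h=1$, since Hensel-type reduction keeps roots in the same lifted subspace once $k\geq1$), and checking that the $O_P(1)$ term is indeed bounded uniformly in $k$.
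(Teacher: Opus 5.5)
Your argument is correct, and its two load-bearing ideas are the paper's. Multiplicativity of the resultant turns equality of one-dimensional laws into equality of joint laws. The test polynomials $g+p^k$, for which $\val\Res(P,g+p^k)=m_g(P)\deg(g)\,k+O_P(1)$, read off every irreducible multiplicity; the paper uses exactly $Z_N=H+p^N$.

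The differences are in the soft analysis.

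\textbf{Joint laws.} The paper fixes the base $p^{-1}$ and works with $Y=\lVert\Res(P,Z)\rVert\in[0,1]$, so that $\val=\infty$ is simply the point $0$. Equality of mixed moments plus Stone--Weierstrass on $[0,1]^r$ then finishes at once. Your generating-function route on $(0,1)^k$ works, but it forces a separate inclusion--exclusion step for the atoms at $\infty$. You should also restrict to exponents $a_j\geq 1$, or write $x^{a_jv_j}$ with $0\cdot\infty=0$. Under your convention $y^\infty=0$, the factor $(x^0)^{\infty}$ equals $0$, which is not what the hypothesis controls.

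\textbf{From pointwise separation to equality of laws.} The paper stays finitary. By compactness of $\mathscr{P}^S_{\leq d}$ and local constancy of $\val\Res(\cdot,Z)\wedge M$, finitely many $Z_1,\ldots,Z_r$ detect a fixed residue class modulo $p^k$ among polynomials of degree at most $d$. Large degrees are then absorbed by an $\varepsilon$-argument using $\mathbf{P}(\deg P_i^S>d)\to 0$. You instead encode $P$ by the whole countable vector $\Phi(P)$ and proceed in three steps.
\begin{enumerate}
\item You prove $\Phi$ injective. Passing from the dense family $\mathcal Z$ to $g+p^k$ by local constancy is fine, since these valuations are finite for all but finitely many $k$.
\item You identify the laws of $\Phi(P_i^S)$ by the $\pi$--$\lambda$ theorem; this is the uniqueness half, not Kolmogorov extension.
\item You pull back via Lusin--Souslin.
\end{enumerate}

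Your route is shorter and needs no degree truncation. The paper's route is elementary and produces an explicit finite-level separation statement. Lusin--Souslin is more than you need here. $\Phi$ is continuous into a compact metrizable space and injective on each compact $\mathscr{P}^S_d$, hence a homeomorphism onto its image there. So $\Phi$ carries Borel sets to Borel sets by elementary means, which is essentially the same compactness the paper exploits.
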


The following theorem is the convergence counterpart of
\Cref{thm: resultant distribution determination introduction}. It shows that
convergence of all resultant distributions, together with tightness of the
degrees, yields convergence of the random polynomials at every finite
coefficient level.

\begin{thm}[Convergence from resultant distributions]
\label{thm: resultant distribution convergence introduction}
Let $P^S_1,P^S_2,\ldots$ be random elements of $\mathscr{P}^S$. Assume that the
following conditions hold:
\begin{enumerate}
\item For every fixed polynomial $Z\in\mathscr{P}^S$, the sequence of random
variables
$$
\val\bigl(\Res(P^S_n,Z)\bigr),\qquad n\geq 1,
$$
converges weakly in $\Z_{\geq 0}\cup\{\infty\}$ as $n\to\infty$. \label{item: resultant convergence}

\item The sequence $\{\deg P^S_n\}_{n\geq 1}$ is tight; namely, 
$$
\lim_{D\to\infty}\sup_{n\geq 1}
\mathbf{P}(\deg P^S_n>D)=0.
$$ \label{item: degree tightness}
\end{enumerate}
Then there exists a random element $P^S_\infty$ of $\mathscr{P}^S$, unique in
distribution, such that
$$
P^S_n
\overset{d}{\longrightarrow}
P^S_\infty,
\qquad n\to\infty.
$$
Furthermore, for every fixed polynomial $Z\in\mathscr{P}^S$, we have
$$
\val\bigl(\Res(P^S_n,Z)\bigr)
\overset{d}{\longrightarrow}
\val\bigl(\Res(P^S_\infty,Z)\bigr),
\qquad n\to\infty.
$$
\end{thm}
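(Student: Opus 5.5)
The plan is a standard compactness and uniqueness argument: degree tightness gives subsequential limits by Prokhorov's theorem, and \Cref{thm: resultant distribution determination introduction} forces all of them to coincide.

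\textbf{Step 1: each fixed-degree piece is compact.} Fix $d\geq 0$. A monic polynomial $P\in\Z_p[x]$ of degree $d$ lies in $\mathscr{P}_d^S$ exactly when its reduction $\overline{P}\in\F_p[x]$ is a product of powers of the $F_i$ with $i\in S$. One direction is immediate: each root lies in some $\mc{U}_i$, and its minimal polynomial reduces to a power of $F_i$. Conversely, if $\overline{P}$ has this form, Hensel's lemma (\Cref{lem: Hensel}) shows that every irreducible factor of $P$ over $\Z_p$ reduces to a power of a single $F_i$ with $i\in S$. This condition depends only on the coefficients modulo $p$. Hence $\mathscr{P}_d^S$ is a clopen subset of $\{\text{monic degree } d\}\cong\Z_p^d$, and in particular it is compact.

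\textbf{Step 2: tightness in $\mathscr{P}^S$.} By the degree tightness hypothesis, for every $\eta>0$ there is a $D$ with $\mathbf{P}(P_n^S\notin\bigsqcup_{d\leq D}\mathscr{P}_d^S)<\eta$ for all $n$. The set $\bigsqcup_{d\leq D}\mathscr{P}_d^S$ is compact in the disjoint-union topology. Since $\mathscr{P}^S$ is Polish (a countable disjoint union of compact metrizable spaces), Prokhorov's theorem shows that every subsequence of $(P_n^S)$ has a further subsequence converging in distribution to some random element $P'$ of $\mathscr{P}^S$.

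\textbf{Step 3: continuity of resultant valuations.} Fix $Z\in\mathscr{P}^S$. On each $\mathscr{P}_d^S$, $\Res(P,Z)$ is a polynomial in the coefficients of $P$ (the Sylvester determinant), so it is a continuous map to $\Z_p$. The map $\val:\Z_p\to\Z_{\geq0}\cup\{\infty\}$ is continuous when the target carries the one-point-compactification topology, where neighborhoods of $\infty$ are the sets $\{\geq k\}$. The preimages of $\{k\}$ and of $\{\geq k\}$ are the clopen sets $p^k\Z_p^\times$ and $p^k\Z_p$.

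I take this compact metrizable topology to be the meaning of weak convergence in hypothesis (1). A law on $\Z_{\geq0}\cup\{\infty\}$ is determined by the numbers $\mathbf{P}(\cdot\geq k)$ for $k\geq 0$. Because these events are clopen, weak convergence is equivalent to convergence of each $\mathbf{P}(\cdot\geq k)$, and weak limits are unique.

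By the continuous mapping theorem, along the convergent subsequence we get $\val\Res(P_{n_j}^S,Z)\overset{d}{\to}\val\Res(P',Z)$. By hypothesis (1), the full sequence $\val\Res(P_n^S,Z)$ already converges to some law $\nu_Z$. Uniqueness of weak limits then gives $\val\Res(P',Z)\sim\nu_Z$ for every $Z\in\mathscr{P}^S$.

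\textbf{Step 4: uniqueness and conclusion.} Let $P'$ and $P''$ be any two subsequential limits. By Step 3 they have identical resultant-valuation laws against every $Z\in\mathscr{P}^S$. \Cref{thm: resultant distribution determination introduction} then gives $P'\overset{d}{=}P''$; call this common law $P_\infty^S$.

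A tight sequence all of whose subsequential limits coincide converges to that limit, so $P_n^S\overset{d}{\to}P_\infty^S$. The final assertion, $\val\Res(P_n^S,Z)\overset{d}{\to}\val\Res(P_\infty^S,Z)$, is Step 3 applied to the full sequence. Uniqueness of $P_\infty^S$ in distribution follows from uniqueness of weak limits on the Polish space $\mathscr{P}^S$.

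The only delicate points are bookkeeping. One is the value $\infty$, which occurs when $P$ and $Z$ share a root; the clopen-set description in Step 3 handles it. The other is confirming that $\mathscr{P}_d^S$ is closed, so that limits stay inside $\mathscr{P}^S$; this is Step 1. The substantive input is the determination theorem, which is being assumed.
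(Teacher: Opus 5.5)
Your proposal is correct and follows essentially the same route as the paper. Degree tightness plus Prokhorov gives subsequential limits, $P\mapsto\val(\Res(P,Z))$ is continuous into the one-point compactification, and hypothesis (1) together with the determination theorem identifies all subsequential limits, so the subsequence principle yields convergence of the full sequence. Your Step 1, which shows via Hensel's lemma that $\mathscr{P}_d^S$ is clopen in $\Z_p^d$, justifies the compactness of $\mathscr{P}_{\le D}^S$; the paper simply takes that compactness for granted.
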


The degree-tightness assumption in
\Cref{thm: resultant distribution convergence introduction} is sufficient
to obtain weak convergence of the random polynomials, but it does not by
itself control the contribution of rare polynomials of large degree to the
expected root statistics. Since our main theorem concerns convergence of
expected $m$-tuple counts, we need the stronger moment tightness condition
below, which ensures uniform integrability of the corresponding root-counting random variables. The following result is therefore the form of the convergence theorem that will be applied in the proof of our main
universality theorem.

\begin{thm}[Convergence of root statistics and their expectations]
\label{thm: expected root statistics convergence}
Fix an integer $m\geq 1$. Let $K_1,\ldots,K_m$ be finite extensions
of $\Q_p$, and let
$$
U\subseteq\mc{O}_{K_1}^{\new}
\times\cdots\times\mc{O}_{K_m}^{\new}
$$
be clopen. Let $P_1^S,P_2^S,\ldots$ be random elements of $\mathscr{P}^S$
satisfying the assumptions of
\Cref{thm: resultant distribution convergence introduction}, and let
$P_\infty^S\in\mathscr{P}^S$ be the limiting random polynomial provided
by that theorem. Suppose that
$$
\mathbf{P}\left(P_\infty^S\text{ has a repeated root}\right)=0.
$$
Then $Z_U(P_n^S)\overset{d}{\longrightarrow}
Z_U(P_\infty^S)$. If, in addition, the degree tightness assumption is strengthened to moment tightness, i.e.,
$$
\lim_{D\to\infty}\sup_{n\geq 1}
\mathbf{E}\left[(\deg P_n^S)^m
\mathbf{1}_{\{\deg P_n^S>D\}}
\right]=0,
$$
then
$$
\lim_{n\to\infty}\mathbf{E}\left[
Z_U(P_n^S)\right]=\mathbf{E}\left[
Z_U(P_\infty^S)\right].
$$
\end{thm}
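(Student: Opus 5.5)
We prove the statement in two stages: first convergence in distribution, then uniform integrability.

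\emph{Distributional convergence.} Since $\mathscr{P}^S$ carries the disjoint-union topology and $Z_U(P)$ takes values in $\Z_{\geq 0}$, the continuous mapping theorem reduces the first claim to one fact. For each fixed degree $d$, the map
$$
P\mapsto Z_U(P)
$$
on $\mathscr{P}_d^S$ should be locally constant at every squarefree $P$. Granting this, we argue as follows. Let $G\subseteq\mathscr{P}^S$ be the set of squarefree polynomials; it is open. By hypothesis $\mathbf{P}(P_\infty^S\in G)=1$. The function $Z_U$ is continuous on $G$, so its discontinuity set has $P_\infty^S$-measure zero. The continuous mapping theorem, applied to $P_n^S\overset{d}{\to}P_\infty^S$ from \Cref{thm: resultant distribution convergence introduction}, then gives
$$
Z_U(P_n^S)\overset{d}{\to}Z_U(P_\infty^S).
$$

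\emph{Local constancy (main obstacle).} Fix a squarefree $P\in\mathscr{P}_d^S$ with distinct roots $\alpha_1,\dots,\alpha_d$. Let $\delta$ be the minimal distance between distinct roots. By Krasner's lemma and continuity of roots, any monic $P'$ with coefficients sufficiently close to those of $P$ has roots $\alpha_j'$ satisfying
$$
\|\alpha_j'-\alpha_j\|<\delta,
$$
and then $\Q_p(\alpha_j')=\Q_p(\alpha_j)$. In particular the field generated by each root is unchanged. The Galois-orbit structure is also preserved: $P'$ has the same factorization type into irreducibles, and conjugacy of $\alpha_i,\alpha_j$ transfers to $\alpha_i',\alpha_j'$. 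Next we need membership in $U$ to be preserved. Since $U$ is clopen in the product of the $\mc{O}_{K_i}^{\new}$, it is open, so it is a union of products of balls around each of its points. We only need to check finitely many tuples of roots of $P$. For tuples in $U$, shrinking the neighborhood keeps the perturbed tuple in $U$. For tuples not in $U$ but lying in $\prod\mc{O}_{K_i}^{\new}$, the complement is open, so shrinking again keeps the perturbed tuple outside $U$. Tuples whose coordinates generate the wrong fields remain wrong by the Krasner step.

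One subtlety is that a root $\alpha_j$ lies in a fixed field $K_i$ only after choosing an embedding. The roots of $P'$ lying in $K_i$ correspond bijectively to those of $P$ lying in $K_i$, via the closeness bijection $\alpha_j\mapsto\alpha_j'$. Krasner's lemma gives $\alpha_j'\in\Q_p(\alpha_j)\subseteq K_i$ with the same generated field, and conversely. This bijection is the heart of the argument and the place where care is needed.

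\emph{Convergence of expectations.} The key bound is $Z_U(P)\leq(\deg P)^m$, since each coordinate is a root of $P$. Moment tightness therefore gives
$$
\lim_{D\to\infty}\sup_{n}\E\bigl[Z_U(P_n^S)\mathbf{1}_{\{Z_U(P_n^S)>D^m\}}\bigr]
\leq\lim_{D\to\infty}\sup_n\E\bigl[(\deg P_n^S)^m\mathbf{1}_{\{\deg P_n^S>D\}}\bigr]=0.
$$
This is uniform integrability of the family $Z_U(P_n^S)$. Combined with the distributional convergence, it yields $\E[Z_U(P_n^S)]\to\E[Z_U(P_\infty^S)]$. By Fatou's lemma the limit is finite.

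Alternatively, one can truncate at degree $D$ and use bounded convergence for $Z_U\mathbf{1}_{\{\deg\leq D\}}$. This works because $\{\deg\leq D\}$ is clopen, so the truncated function is again a.s. continuous and bounded by $D^m$.
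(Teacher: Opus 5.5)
Your proposal is correct and follows essentially the same route as the paper. Both arguments establish local constancy of $Z_U$ at squarefree polynomials (via continuity of roots, Krasner's lemma, Galois-equivariant matching of roots, and clopenness of $U$), then apply the continuous mapping theorem, and finally get uniform integrability from $Z_U(P)\leq(\deg P)^m$ together with moment tightness.

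One small correction: Krasner's lemma by itself gives $\Q_p(\alpha_j)\subseteq\Q_p(\alpha_j')$, not $\alpha_j'\in\Q_p(\alpha_j)$ as you wrote. The paper obtains the reverse inclusion by noting that the Galois orbit of $\alpha_j'$ lies in the union of the small balls around the orbit of $\alpha_j$, each containing exactly one root of $P'$, so $[\Q_p(\alpha_j'):\Q_p]\leq[\Q_p(\alpha_j):\Q_p]$. Your appeal to ``the same factorization type'' implicitly relies on this counting step.
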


\subsection{Determination by resultant distributions}
\label{subsec: determination by resultant distributions}

In this subsection, we prove
\Cref{thm: resultant distribution determination introduction}. 
For every $d\geq 0$, let $\mathscr{P}_d^S$ denote the subset of
$\mathscr{P}^S$ consisting of polynomials of degree $d$, and define
$$
\mathscr{P}_{\leq d}^S:=\bigcup_{0\leq e\leq d}\mathscr{P}_e^S.
$$
Under the natural coefficient identification, the space
$\mathscr{P}_d^S$ is a Borel subset of $\Z_p^d$. We equip
$$
\mathscr{P}^S=\bigsqcup_{d\geq 0}\mathscr{P}_d^S
$$
with the corresponding disjoint-union Borel structure.

The proof consists of two steps. We first show that the assumed
one-dimensional resultant distributions determine the joint distributions of
any finite collection of resultant valuations. We then show that a sufficiently
rich collection of such valuations determines the polynomial itself. 

\begin{lemma}
\label{lem: joint resultant valuation determination}
Let $P^S_1$ and $P^S_2$ be random elements of $\mathscr{P}^S$. Suppose that, for
every fixed $Z\in\mathscr{P}^S$,
$$
\val\bigl(\Res(P^S_1,Z)\bigr)
\overset{d}{=}
\val\bigl(\Res(P^S_2,Z)\bigr).
$$
Then, for every fixed collection $Z_1,\ldots,Z_r\in\mathscr{P}^S$, we have
$$
\left(
\val\bigl(\Res(P^S_1,Z_1)\bigr),
\ldots,
\val\bigl(\Res(P^S_1,Z_r)\bigr)
\right)
\overset{d}{=}
\left(
\val\bigl(\Res(P^S_2,Z_1)\bigr),
\ldots,
\val\bigl(\Res(P^S_2,Z_r)\bigr)
\right).
$$
\end{lemma}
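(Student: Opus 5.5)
The idea is to exploit multiplicativity of the resultant in the second argument, so that a single resultant against a product polynomial encodes a whole collection of resultant valuations. By \Cref{prop: properties of resultant}(3), for monic $Z_1,\ldots,Z_r\in\mathscr{P}^S$ and nonnegative integers $a_1,\ldots,a_r$, the polynomial $Z:=Z_1^{a_1}\cdots Z_r^{a_r}$ is again monic with roots in $\bigcup_{i\in S}\mc{U}_i$, so $Z\in\mathscr{P}^S$. It satisfies
$$
\val\bigl(\Res(P,Z)\bigr)=\sum_{j=1}^r a_j\,\val\bigl(\Res(P,Z_j)\bigr)
$$
for every $P\in\mathscr{P}^S$, with the convention that the sum is $\infty$ if some term with $a_j>0$ is infinite. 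The hypothesis therefore says that every nonnegative integer linear combination of the vector $V_k:=(\val(\Res(P^S_k,Z_j)))_{j}$ has the same law for $k=1,2$. The task reduces to showing that such linear combinations determine the law of a random vector in $(\Z_{\ge0}\cup\{\infty\})^r$.

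\textbf{Step 1: recover the finite part.} I would handle the value $\infty$ by encoding it as $0$ in a generating function. For $t\in(0,1)^r$ and $v\in(\Z_{\ge0}\cup\{\infty\})^r$, set $t^v=\prod_j t_j^{v_j}$, with $t_j^\infty=0$. Choosing $t_j=s^{a_j}$ for a fixed $s\in(0,1)$ gives $t^{V_k}=s^{\sum_j a_jV_{k,j}}$, so $\E[t^{V_1}]=\E[t^{V_2}]$ whenever all $t_j$ are integer powers of a common $s$. Restricting to the finite part, $F_k(t):=\E[t^{V_k};V_k\in\Z_{\ge0}^r]$ is a power series in $t$ with nonnegative coefficients summing to at most $1$. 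It is therefore continuous on $[0,1)^r$ and real-analytic on the open polydisk. I still need to check that $F_k(t)$ is exactly $\E[t^{V_k}]$ when all $t_j>0$, since any infinite coordinate then contributes $0$ anyway.

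\textbf{Step 2: extend to all of $(0,1)^r$.} The set of points $(s^{a_1},\ldots,s^{a_r})$, with $s\in(0,1)$ and $a\in\Z_{\ge1}^r$, is dense in $(0,1)^r$, because rational ratios $\log t_j/\log t_1$ are dense. By continuity $F_1=F_2$ on $(0,1)^r$. Since power series agreeing on an open subset of $(0,1)^r$ have identical coefficients, $\mathbf{P}(V_1=v)=\mathbf{P}(V_2=v)$ for every finite $v$.

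\textbf{Step 3: recover configurations with infinite coordinates.} For patterns in which a set $J$ of coordinates is infinite, I would apply the same argument to the subcollection $\{Z_j\}_{j\notin J}$: this gives the law of $(V_{k,j})_{j\notin J}$ restricted to finite values. Inclusion–exclusion over subsets $J$ then yields $\mathbf{P}(V_{k,j}=\infty \text{ for } j\in J,\ V_{k,j}=v_j \text{ for } j\notin J)$. This last bookkeeping step is the one I expect to need the most care. It would be simpler to instead take $a_j=0$ for $j\in J$, reading the law of the subvector directly as the $r$ case for $\{Z_j\}_{j\notin J}$, and then recover $\mathbf{P}(\text{finite on } J^c,\ \text{pattern } v)$ by subtracting the fully finite masses computed for larger index sets.
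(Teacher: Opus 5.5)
Your argument is correct. It shares the paper's key reduction: multiplicativity of the resultant turns the joint law into the one-dimensional laws of $\val(\Res(P^S_k,Z_1^{a_1}\cdots Z_r^{a_r}))=\sum_j a_j V_{k,j}$. From there the two routes differ.

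The paper passes to $Y_{k,j}:=\lVert\Res(P^S_k,Z_j)\rVert=p^{-V_{k,j}}\in[0,1]$, so that $\infty$ becomes the point $0$ of the compact cube. It uses the single base $p^{-1}$ and all exponents $m\in\Z_{\geq 0}^r$, including zeros (with $y^0=1$, matching $Z_j^0=1$). This gives equality of all mixed moments $\E[\prod_j Y_{k,j}^{m_j}]$, and the paper concludes by Stone--Weierstrass on $C([0,1]^r)$ before mapping back via $y\mapsto -\log_p y$.

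You instead vary the base $s\in(0,1)$ and keep the exponents strictly positive. Density of the points $(s^{a_1},\ldots,s^{a_r})$ in $(0,1)^r$, plus the identity theorem for the finite-part generating function, gives you the point masses on $\Z_{\geq 0}^r$. The check you flag in Step 1 is immediate from the convention $t_j^\infty=0$ for $t_j\in(0,1)$. Your Step 3 (passing to subcollections, then inclusion--exclusion) then recovers the mass on patterns with infinite coordinates, and it is sound. That step supplies exactly the information the paper gets for free by allowing $a_j=0$, i.e.\ $t_j=1$ with $1^\infty=1$.

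Your route is more hands-on: it produces explicit point masses and needs no approximation theorem. The paper's is shorter, because compactness of $[0,1]^r$ absorbs the value $\infty$ and a single base suffices.
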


\begin{proof}
For $j\in\{1,2\}$ and $1\leq i\leq r$, define
$$
Y_{j,i}:=\left\|\Res(P^S_j,Z_i)\right\|\in[0,1].
$$
We will show that the random vectors
$$
(Y_{1,1},\ldots,Y_{1,r})
\quad\text{and}\quad
(Y_{2,1},\ldots,Y_{2,r})
$$
have the same mixed moments.

Fix $(m_1,\ldots,m_r)\in\Z_{\geq 0}^r$. The polynomial $Z_1^{m_1}\cdots Z_r^{m_r}$ belongs to $\mathscr{P}^S$. By the multiplicativity of the resultant in
\Cref{prop: properties of resultant}, we have
$$
\left\|
\Res\left(P^S_j,Z_1^{m_1}\cdots Z_r^{m_r}\right)
\right\|=
\prod_{i=1}^r Y_{j,i}^{m_i},
\qquad j\in\{1,2\},
$$
where we use the convention $y^0=1$ for every $y\in[0,1]$.

By assumption,
$$
\val\left(
\Res\left(P^S_1,Z_1^{m_1}\cdots Z_r^{m_r}\right)
\right)
\overset{d}{=}
\val\left(
\Res\left(P^S_2,Z_1^{m_1}\cdots Z_r^{m_r}\right)
\right).
$$
Applying the map $a\mapsto p^{-a}$, with $p^{-\infty}:=0$, gives
$$
\left\|
\Res\left(P^S_1,Z_1^{m_1}\cdots Z_r^{m_r}\right)
\right\|
\overset{d}{=}
\left\|
\Res\left(P^S_2,Z_1^{m_1}\cdots Z_r^{m_r}\right)
\right\|.
$$
Consequently,
$$
\E\left[\prod_{i=1}^r Y_{1,i}^{m_i}\right]
=
\E\left[\prod_{i=1}^r Y_{2,i}^{m_i}\right].
$$
Thus the two $[0,1]^r$-valued random vectors have the same mixed
polynomial moments.

By the Stone-Weierstrass theorem, polynomials in the coordinate
functions are dense in $C([0,1]^r)$. It follows that the two random
vectors have the same integrals against every continuous function on
$[0,1]^r$, and hence
$$
(Y_{1,1},\ldots,Y_{1,r})
\overset{d}{=}
(Y_{2,1},\ldots,Y_{2,r}).
$$
Finally, applying coordinatewise the map
$$
y\longmapsto
\begin{cases}
-\log_p y, & y>0,\\
+\infty, & y=0,
\end{cases}
$$
we obtain
$$
\left(
\val(\Res(P^S_1,Z_1)),\ldots,\val(\Res(P^S_1,Z_r))
\right)
\overset{d}{=}
\left(
\val(\Res(P^S_2,Z_1)),\ldots,\val(\Res(P^S_2,Z_r))
\right),
$$
as desired.
\end{proof}

We next formulate the finite-level separation statement that will allow us to recover the law of the polynomial.

\begin{prop}
\label{prop: finite level resultant separation}
Let $d,k\geq 0$ be nonnegative integers, and let $P^S\in\mathscr{P}_{\leq d}^S$ be fixed. Then there exist finitely many polynomials
$Z_1,\ldots,Z_r\in\mathscr{P}^S$ such that the following holds. For every
$Q^S_1\in\mathscr{P}_{\leq d}^S$ satisfying
$$
Q^S_1\equiv P^S\pmod{p^k}
$$
and every $Q^S_2\in\mathscr{P}_{\leq d}^S$ satisfying
$$
Q^S_2\not\equiv P^S\pmod{p^k},
$$
we have
$$
\left(\val\bigl(\Res(Q^S_1,Z_1)\bigr),
\ldots,\val\bigl(\Res(Q^S_1,Z_r)\bigr)
\right)\neq\left(
\val\bigl(\Res(Q^S_2,Z_1)\bigr),
\ldots,
\val\bigl(\Res(Q^S_2,Z_r)\bigr)
\right).
$$
\end{prop}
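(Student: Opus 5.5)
The plan is a compactness argument. First I separate any two \emph{individual} distinct polynomials by a single test polynomial. Then I show that this separation persists on a neighbourhood of the pair. Finally I cover the relevant compact set of pairs by finitely many such neighbourhoods.

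\emph{Step 0: topology.} For $Q\in\Z_p[x]$ monic of degree $e$, the condition $Q\in\mathscr{P}^S_e$ says exactly that $\overline{Q}$ is a product of powers of $F_i$ with $i\in S$. This depends only on $Q\bmod p$, so $\mathscr{P}^S_e$ is clopen, hence compact, in $\Z_p^e$. Therefore $\mathscr{P}^S_{\leq d}$ is compact. Let
$$
C_1:=\{Q\in\mathscr{P}^S_{\leq d}: Q\equiv P^S\pmod{p^k}\},\qquad C_2:=\mathscr{P}^S_{\leq d}\setminus C_1 .
$$
Here congruence mod $p^k$ of monic polynomials is understood to include equality of degrees. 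Both $C_1$ and $C_2$ are clopen in $\mathscr{P}^S_{\leq d}$, so $C_1\times C_2$ is compact.

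\emph{Step 1: pointwise separation.} Let $Q_1\neq Q_2$ be monic in $\Z_p[x]$ with roots in $\bigcup_{i\in S}\mc{U}_i$. Unique factorization in $\Q_p[x]$ gives a monic irreducible $f\in\Z_p[x]$ dividing $Q_1Q_2$ whose multiplicities $m_1\neq m_2$ in $Q_1,Q_2$ differ. Since the roots of $f$ are roots of $Q_1$ or $Q_2$, we have $f\in\mathscr{P}^S$. Write $Q_j=f^{m_j}h_j$ with $\gcd(h_j,f)=1$, and set $Z_N:=f+p^N$ for $N\geq 1$. Then $Z_N\equiv f\pmod p$, so $Z_N\in\mathscr{P}^S$. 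By \Cref{prop: properties of resultant}(3),(4),
$$
\Res(f,Z_N)=\Res(f,p^N)=p^{N\deg f},
$$
so that
$$
\val\Res(Q_j,Z_N)=m_jN\deg f+\val\Res(h_j,Z_N).
$$
As $N\to\infty$, $Z_N\to f$ coefficientwise. Since $\Res(h_j,f)\neq 0$, continuity of the resultant in the coefficients makes $\val\Res(h_j,Z_N)$ eventually constant. Because $m_1\neq m_2$, for $N$ large the two valuations differ, and I take $Z=Z_N$.

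\emph{Step 2: local persistence.} Fix $Z\in\mathscr{P}^S$ and put $v(Q):=\val\Res(Q,Z)$. On each fixed-degree component, $\Res(Q,Z)$ is a polynomial in the coefficients of $Q$. Hence $v$ is locally constant near any $Q$ with $v(Q)<\infty$, and near $Q$ with $v(Q)=\infty$ it exceeds any prescribed bound. So if $v(Q_1)\neq v(Q_2)$, there are open neighbourhoods $V_1\ni Q_1$, $V_2\ni Q_2$ with $v(Q_1')\neq v(Q_2')$ for all $Q_1'\in V_1$, $Q_2'\in V_2$. In the case where one value is infinite, I choose the bound larger than the other, locally constant, value.

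\emph{Step 3: compactness.} The open sets $V_1\times V_2$ from Step 2, indexed by pairs in $C_1\times C_2$, cover the compact set $C_1\times C_2$. I extract a finite subcover and let $Z_1,\dots,Z_r$ be the associated test polynomials. Every pair $(Q_1^S,Q_2^S)\in C_1\times C_2$ lies in some $V_1\times V_2$ of the subcover, and the corresponding $Z_i$ gives different valuations. Hence the two valuation vectors differ, which is the statement of \Cref{prop: finite level resultant separation}.

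The main obstacle is Step 1 when $Q_1$ and $Q_2$ share roots, since naive tests such as $Z=Q_1$ give $\infty$ on both sides. The perturbation $f+p^N$ resolves this: it turns the multiplicity of the shared factor into a linear growth rate in $N$, while staying inside $\mathscr{P}^S$ because it does not change the reduction mod $p$. The remaining care is the bookkeeping at infinite valuations in Step 2, which I handle by the lower semicontinuity argument above.
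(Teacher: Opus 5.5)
Your proposal is correct and follows essentially the same route as the paper. Both arguments separate each pair pointwise with $Z_N=f+p^N$, using the differing multiplicities of a common irreducible factor, then use local constancy of the (truncated) resultant valuation and compactness of the product of the two clopen sets. Your explicit checks that $f\in\mathscr{P}^S$ and of the case of infinite valuations fill in details the paper leaves implicit.
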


\begin{proof}
The case $k=0$ is immediate, so assume that $k\geq 1$. Set
$$
\mathscr{A}_{P,k}:=\left\{
Q^S_1\in\mathscr{P}_{\leq d}^S:
Q^S_1\equiv P^S\pmod{p^k}
\right\}
$$
and
$$
\mathscr{B}_{P,k}:=\left\{
Q^S_2\in\mathscr{P}_{\leq d}^S:
Q^S_2\not\equiv P^S\pmod{p^k}
\right\}.
$$
Since $\mathscr{P}_{\leq d}^S$ is a finite union of compact spaces,
both $\mathscr{A}_{P,k}$ and $\mathscr{B}_{P,k}$ are compact. Hence $\mathscr{A}_{P,k}\times\mathscr{B}_{P,k}$
is compact.

We first show that every pair
$$
(Q^S_1,Q^S_2)\in
\mathscr{A}_{P,k}\times\mathscr{B}_{P,k}
$$
can be separated by a resultant valuation. Since $Q^S_1\not\equiv Q^S_2\pmod{p^k}$, we have $Q^S_1\neq Q^S_2$.

Factor $Q^S_1$ and $Q^S_2$ into monic irreducible polynomials over $\Q_p$.
Since $Q^S_1\neq Q^S_2$, there exists a monic irreducible polynomial
$H\in\Z_p[x]$ whose multiplicities in $Q^S_1$ and $Q^S_2$ are different.
Equivalently, one may choose a root whose minimal polynomial occurs with
different multiplicities in $Q^S_1$ and $Q^S_2$. In particular, when one
polynomial has a root that the other does not, we take $H$ to be the minimal
polynomial of that root.

Write
$$
Q^S_i=H^{m_i}R_i,\qquad i\in\{1,2\},
$$
where $m_1\neq m_2$ and $H\nmid R_i$. For $N\geq 1$, define
$$
Z_N:=H+p^N.
$$
Since $Z_N\equiv H\pmod p$, we have $Z_N\in\mathscr{P}^S$.

Let $h:=\deg H$. Since
$$
\Res(H,Z_N)=\Res(H,H+p^N)=p^{Nh},
$$
we have
$$
\val\bigl(\Res(H^{m_i},Z_N)\bigr)=m_iNh.
$$
Moreover, since $H$ and $R_i$ are relatively prime over $\Q_p$, we have
$$
\Res(R_i,H)\neq 0.
$$
As $Z_N\to H$ coefficientwise as $N\to\infty$, it follows that
$$
\val\bigl(\Res(R_i,Z_N)\bigr)=\val\bigl(\Res(R_i,H)\bigr)
$$
for all sufficiently large $N$. By the third item of \Cref{prop: properties of resultant},
for all sufficiently large $N$,
$$
\val\bigl(\Res(Q^S_i,Z_N)\bigr)
=
m_iNh+\val\bigl(\Res(R_i,H)\bigr).
$$
Since $m_1\neq m_2$, these two quantities are different for all sufficiently
large $N$. Thus there exists $Z\in\mathscr{P}^S$ such that
$$
\val\bigl(\Res(Q^S_1,Z)\bigr)
\neq
\val\bigl(\Res(Q^S_2,Z)\bigr).
$$

Choose an integer $M\geq 1$ such that
$$
\val\bigl(\Res(Q^S_1,Z)\bigr)\wedge M
\neq
\val\bigl(\Res(Q^S_2,Z)\bigr)\wedge M.
$$
The map
$$
Q^S\longmapsto
\val\bigl(\Res(Q^S,Z)\bigr)\wedge M
$$
is locally constant on $\mathscr{P}_{\leq d}^S$, since it depends only on
$\Res(Q^S,Z)\bmod p^M$. Hence there exist neighborhoods
$\mathcal{V}_{Q^S_1}$ of $Q^S_1$ and $\mathcal{V}_{Q^S_2}$ of $Q^S_2$ such that
$$
\val\bigl(\Res(\widetilde Q^S_1,Z)\bigr)
\neq
\val\bigl(\Res(\widetilde Q^S_2,Z)\bigr)
$$
for every
$$
\widetilde Q^S_1\in\mathcal{V}_{Q^S_1},\qquad
\widetilde Q^S_2\in\mathcal{V}_{Q^S_2}.
$$

These product neighborhoods form an open cover of $\mathscr{A}_{P,k}\times\mathscr{B}_{P,k}$. By compactness, there is a finite subcover. Let
$Z_1,\ldots,Z_r\in\mathscr{P}^S$ be the corresponding test polynomials.
Then every pair
$$
(Q^S_1,Q^S_2)\in
\mathscr{A}_{P,k}\times\mathscr{B}_{P,k}
$$
is separated by at least one of the resultant valuations associated with
$Z_1,\ldots,Z_r$. This proves the proposition.
\end{proof}

Assuming \Cref{prop: finite level resultant separation} for the moment, we
complete the proof of the determination theorem.

\begin{proof}[Proof of
\Cref{thm: resultant distribution determination introduction}]
It suffices to prove that, for every $k\geq 1$ and every
$Q^S\in\mathscr{P}^S$,
$$
\mathbf{P}\left(P^S_1\equiv Q^S\pmod{p^k}\right)=
\mathbf{P}\left(P^S_2\equiv Q^S\pmod{p^k}\right).
$$
Suppose, toward a contradiction, that this fails for some $k\geq 1$ and
some $Q\in\mathscr{P}^S$. Set
$$
\varepsilon:=\left|
\mathbf{P}\left(P^S_1\equiv Q^S\pmod{p^k}\right)-\mathbf{P}\left(P^S_2\equiv Q^S\pmod{p^k}\right)
\right|>0.
$$
Choose $d\geq\deg Q$ sufficiently large that
$$
\mathbf{P}(\deg P^S_1>d)
+
\mathbf{P}(\deg P^S_2>d)
<
\varepsilon.
$$

Apply \Cref{prop: finite level resultant separation} to $Q^S$, $d$, and
$k$. We obtain polynomials
$$
Z_1,\ldots,Z_r\in\mathscr{P}^S
$$
such that every polynomial in $\mathscr{P}_{\leq d}^S$ congruent to $Q^S$
modulo $p^k$ has a different resultant-valuation vector from every
polynomial in $\mathscr{P}_{\leq d}^S$ not congruent to $Q^S$ modulo $p^k$.

Define
$$
\Phi(R):=\left(\val\bigl(\Res(R,Z_1)\bigr),
\ldots,\val\bigl(\Res(R,Z_r)\bigr)\right)
$$
and let
$$
\mathcal{A}:=\left\{
\Phi(R):
R\in\mathscr{P}_{\leq d}^S,\ 
R\equiv Q^S\pmod{p^k}
\right\}.
$$
By the separating property of the polynomials $Z_1,\ldots,Z_r$, for every
$R\in\mathscr{P}_{\leq d}^S$ we have
$$
\Phi(R)\in\mathcal{A}
\quad\Longleftrightarrow\quad
R\equiv Q^S\pmod{p^k}.
$$
Consequently, for $i\in\{1,2\}$,
$$
\left|\mathbf{P}\left(P^S_i\equiv Q^S\pmod{p^k}\right)-\mathbf{P}\left(\Phi(P^S_i)\in\mathcal{A}\right)
\right|
\leq
\mathbf{P}(\deg P^S_i>d).
$$

On the other hand, by
\Cref{lem: joint resultant valuation determination}, the random vectors
$\Phi(P^S_1)$ and $\Phi(P^S_2)$ have the same distribution. Hence
$$
\mathbf{P}\left(\Phi(P^S_1)\in\mathcal{A}\right)
=
\mathbf{P}\left(\Phi(P^S_2)\in\mathcal{A}\right).
$$
It follows that
\begin{align*}
\varepsilon
&=\left|\mathbf{P}\left(P^S_1\equiv Q^S\pmod{p^k}\right)-\mathbf{P}\left(P^S_2\equiv Q^S\pmod{p^k}\right)
\right| \\
&\leq\mathbf{P}(\deg P^S_1>d)+\mathbf{P}(\deg P^S_2>d)<\varepsilon,
\end{align*}
which is a contradiction. Therefore, for every $k\geq 1$,
$$
P^S_1\bmod p^k\overset{d}{=}P^S_2\bmod p^k.
$$
Since these finite coefficient quotients generate the Borel
$\sigma$-algebra of $\mathscr{P}^S$, we conclude that
$$
P^S_1\overset{d}{=}P^S_2.
$$
\end{proof}

\subsection{Convergence from resultant distributions}
\label{subsec: convergence from resultant distributions}

In this subsection, we prove
\Cref{thm: resultant distribution convergence introduction} and
\Cref{thm: expected root statistics convergence}. 

\begin{proof}[Proof of
\Cref{thm: resultant distribution convergence introduction}]
Recall that for every $D\geq 0$, the finite union
$$
\mathscr{P}_{\leq D}^S=\bigcup_{0\leq d\leq D}\mathscr{P}_d^S
$$
is compact. Therefore, the degree-tightness \eqref{item: degree tightness} assumption implies that the sequence of laws
$\{\mathcal{L}(P^S_n)\}_{n\geq 1}$ is tight on $\mathscr{P}^S$. By Prokhorov's theorem, every subsequence of $\{P^S_n\}_{n\geq 1}$ admits a further subsequence that converges weakly in $\mathscr{P}^S$. Thus, after
passing to a subsequence, there exists a random element
$P^S_\infty\in\mathscr{P}^S$ such that
$P^S_n\overset{d}{\longrightarrow}P^S_\infty$. Notice that for a fixed polynomial $Z\in\mathscr{P}^S$, the map $$\val\bigl(\Res(\cdot,Z)\bigr):\mathscr{P}^S\longrightarrow\Z_{\geq 0}\cup\{\infty\}$$
is continuous, where $\Z_{\geq 0}\cup\{\infty\}$ is equipped with the one-point compactification topology. Therefore, the continuous mapping theorem gives
$$
\val\bigl(\Res(P^S_n,Z)\bigr)
\overset{d}{\longrightarrow}
\val\bigl(\Res(P^S_\infty,Z)\bigr).
$$

Now suppose that another subsequence converges weakly to a random element
$Q^S_\infty\in\mathscr{P}^S$. Applying the same argument to this subsequence,
we obtain, for every fixed $Z\in\mathscr{P}^S$,
$$
\val\bigl(\Res(P^S_\infty,Z)\bigr)
\overset{d}{=}
\val\bigl(\Res(Q^S_\infty,Z)\bigr).
$$
By
\Cref{thm: resultant distribution determination introduction}, it follows
that $P^S_\infty\overset{d}{=}Q^S_\infty$. Thus all subsequential weak limits have the same distribution.

Since the sequence $\{\mathcal{L}(P^S_n)\}_{n\geq 1}$ is tight and has a
unique possible subsequential weak limit, the full sequence converges to $P^S_\infty$ as random elements of $\mathscr{P}^S$. Finally, for every fixed polynomial $Z\in\mathscr{P}^S$, the continuity of the map $P\mapsto\val(\Res(P,Z))$ and the continuous mapping theorem yield
$$
\val\bigl(\Res(P^S_n,Z)\bigr)
\overset{d}{\longrightarrow}
\val\bigl(\Res(P^S_\infty,Z)\bigr).
$$
The uniqueness in distribution of $P^S_\infty$ follows once more from
\Cref{thm: resultant distribution determination introduction}.
\end{proof}

\begin{rmk}
\label{rem: degree tightness necessary}
The degree-tightness assumption in
\Cref{thm: resultant distribution convergence introduction} cannot be omitted. Indeed, let $S=\{1\}$, so that, according to our convention, $\mathcal{U}_1$ is the open unit disk, and consider the deterministic
sequence
$$
P^S_n(x):=x^n-p\in\mathscr{P}^{\{1\}}.
$$
For every fixed $Z\in\mathscr{P}^{\{1\}}$, we claim that
$$
\val\bigl(\Res(P^S_n,Z)\bigr)=\deg Z
$$
for all sufficiently large $n$. To see this, let $\alpha_1,\ldots,\alpha_d$ be the roots of $Z$, counted
with multiplicity. Since each $\alpha_j$ lies in the open unit disk, we have
$\val(\alpha_j)>0$. Hence, for all sufficiently large $n$,
$$
n\val(\alpha_j)>1
$$
for every $j\in\{1,\ldots,d\}$, and therefore
$\val(\alpha_j^n-p)=1$. It follows that
$$
\val\bigl(\Res(P^S_n,Z)\bigr)=\sum_{j=1}^d\val(\alpha_j^n-p)=d=\deg Z.
$$

Thus, for every fixed test polynomial $Z\in\mathscr{P}^{\{1\}}$, the
resultant valuation converges, while
$\deg P^S_n=n$ is not tight. This proves that the tightness assumption \eqref{item: degree tightness} in \Cref{thm: resultant distribution convergence introduction} is necessary. Similar examples can be constructed in any other lifted subspace by replacing $x$ with a fixed monic lift of the corresponding irreducible polynomial over $\F_p$.
\end{rmk}

We now turn to the convergence of the expected root statistics. To pass from weak convergence of the random polynomials to weak convergence of their root-counting statistics, we first need to identify the continuity points of the map $Z_U$. Although $Z_U$ may fail to be continuous at polynomials with repeated roots, the following lemma shows that it is continuous at every squarefree polynomial. This will allow us to apply the continuous mapping theorem under the squarefreeness assumption on $P^S_\infty$. The stronger degree condition in \Cref{thm: expected root statistics convergence} will then provide the uniform integrability needed to pass from convergence in distribution to convergence of expectations.

\begin{lemma}[Continuity of root statistics at squarefree polynomials]
\label{lem: continuity of root statistics}
Let $K_1,\ldots,K_m$ be finite extensions of $\Q_p$, and let
$$
U
\subseteq
\mc{O}_{K_1}^{\new}
\times\cdots\times
\mc{O}_{K_m}^{\new}
$$
be clopen. Then the map
$$
Z_U:\mathscr{P}^S\longrightarrow\Z_{\geq 0}
$$
is continuous at every squarefree polynomial
$P^S\in\mathscr{P}^S$.
\end{lemma}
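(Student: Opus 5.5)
The argument is a Krasner's lemma / continuity-of-roots argument. Fix a squarefree $P^S\in\mathscr{P}^S$ of degree $d$. Since $\mathscr{P}^S$ carries the disjoint-union topology, any nearby $Q$ also has degree $d$ and is coefficientwise close to $P^S$, so we may work inside $\mathscr{P}_d^S$. We show that $Z_U(Q)=Z_U(P^S)$ for all $Q\in\mathscr{P}_d^S$ sufficiently close to $P^S$.

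\emph{Step 1: roots move continuously.} Let $\alpha_1,\ldots,\alpha_d\in\overline{\Z}_p$ be the distinct roots of $P^S$, and let
$$
\delta:=\min_{i\neq j}\val(\alpha_i-\alpha_j)<\infty.
$$
Suppose $Q\equiv P^S\pmod{p^N}$ with $N$ large. Then for each root $\beta$ of $Q$ we have $\val(P^S(\beta))\ge N$, and hence
$$
\sum_i \val(\beta-\alpha_i)\ge N.
$$
So $\beta$ is within valuation $\ge (N-(d-1)\delta)$ of a unique $\alpha_i$; the lower bound on the other distances uses $\val(\beta-\alpha_j)=\val(\alpha_i-\alpha_j)$ for $j\neq i$. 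A counting argument then shows the roots of $Q$ are in bijection with those of $P^S$, with $\beta_i$ close to $\alpha_i$. For this, use a Newton-polygon or Hensel argument applied to $P^S(x+\alpha_i)$: since $P^{S\prime}(\alpha_i)\neq 0$, a Hensel/Newton iteration in $\Q_p(\alpha_i)$ gives exactly one root $\beta_i$ of $Q$ near $\alpha_i$. In particular $Q$ is squarefree for $N$ large.

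\emph{Step 2: fields and Galois orbits are preserved.} By Krasner's lemma, once
$$
\val(\beta_i-\alpha_i)>\max_{\sigma(\alpha_i)\neq\alpha_i}\val(\sigma\alpha_i-\alpha_i),
$$
we get $\Q_p(\alpha_i)\subseteq\Q_p(\beta_i)$. Symmetrically (the roots of $Q$ near $\alpha_i$ are unique and degrees agree), the Newton iteration above produces $\beta_i\in\Q_p(\alpha_i)$, so $\Q_p(\beta_i)=\Q_p(\alpha_i)$. Moreover, $\sigma\in\Gal(\overline{\Q}_p/\Q_p)$ preserves valuations and permutes roots of both polynomials, so $\sigma(\beta_i)=\beta_j$ if and only if $\sigma(\alpha_i)=\alpha_j$ by uniqueness of the nearby root. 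Hence the bijection $\alpha_i\mapsto\beta_i$ respects Galois orbits, and it respects membership in each $\mc{O}_{K_l}^{\new}$, using that $K_l\subset\overline{\Q}_p$ is fixed as a subfield.

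\emph{Step 3: clopenness of $U$.} Tuples $(x_1,\ldots,x_m)$ counted by $Z_U(P^S)$ are tuples of $\alpha$'s; the corresponding $\beta$-tuples lie in the same product $K_1\times\cdots\times K_m$, lie in distinct orbits, and are coordinatewise close. Since $U$ is open and only finitely many candidate tuples exist, for $N$ large each $\beta$-tuple is in $U$ exactly when the $\alpha$-tuple is. For the converse direction, we use that the complement of $U$ in $\mc{O}_{K_1}^{\new}\times\cdots\times\mc{O}_{K_m}^{\new}$ is open, which holds because $U$ is clopen. Thus the two counts agree.

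The main obstacle is Step 1 together with the field-preservation part of Step 2, namely ensuring $\beta_i\in\Q_p(\alpha_i)$ rather than merely $\Q_p(\alpha_i)\subseteq\Q_p(\beta_i)$. I would handle this via the quantitative Hensel lemma: $\val(Q(\alpha_i))\ge N>2\val(Q'(\alpha_i))=2\val(P^{S\prime}(\alpha_i))$ for $N$ large, applied over $\mc{O}_{\Q_p(\alpha_i)}$. This yields a root in $\Q_p(\alpha_i)$ at distance $>\delta$, which by Step 1 must be $\beta_i$.
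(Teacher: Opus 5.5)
Your proposal is correct and follows essentially the paper's proof. Roots of a nearby $Q$ are matched uniquely with those of $P^S$, the matching is Galois-equivariant by uniqueness of the nearby root, Krasner's lemma gives $\Q_p(\alpha_i)\subseteq\Q_p(\beta_i)$, and clopenness of $U$ finishes. The one local difference is the reverse inclusion: you obtain it from Hensel's lemma over $\mathcal{O}_{\Q_p(\alpha_i)}$, whereas the paper bounds $[\Q_p(a_r):\Q_p]$ by noting that the Galois orbit of $a_r$ lies in the balls around the orbit of $a$, which contain exactly $[\Q_p(a):\Q_p]$ roots of $Q_r$.

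One small fix: your separation constant should be $\max_{i\neq j}\val(\alpha_i-\alpha_j)$, not the minimum. The estimate $\val(\beta-\alpha_i)\ge N-(d-1)\delta$ needs the upper bound $\val(\beta-\alpha_j)\le\val(\alpha_i-\alpha_j)$ for $j\neq i$, and your uniqueness discs must be smaller than the distance between the closest pair of roots.
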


\begin{proof}
Let $P^S\in\mathscr{P}^S$ be squarefree, and let
$Q_r^S\in\mathscr{P}^S$ be a sequence such that
$$
Q_r^S\longrightarrow P^S.
$$
Since $\mathscr{P}^S$ is equipped with the disjoint-union topology, we
have
$$
\deg Q_r^S=\deg P^S
$$
for all sufficiently large $r$.

Let $\mathcal R(P^S)\subseteq\overline{\Q}_p$
denote the set of roots of $P^S$. Since $P^S$ is squarefree, its roots
are pairwise distinct. By continuity of the roots of a monic polynomial
with respect to its coefficients, after labeling the roots of $Q_r^S$
appropriately, we may write
$$
P^S(x)=\prod_{a\in\mathcal R(P^S)}(x-a),
\qquad Q_r^S(x)=\prod_{a\in\mathcal R(P^S)}(x-a_r),
$$
where $a_r\longrightarrow a$ for every $a\in\mathcal R(P^S)$.

We first record that this correspondence preserves Galois orbits for all
sufficiently large $r$. Choose pairwise disjoint sufficiently small open
balls around the roots of $P^S$, with the balls chosen equivariantly
under the action of
$\Gal(\overline{\Q}_p/\Q_p)$. For all sufficiently large $r$, each such
ball contains exactly one root of $Q_r^S$. Hence, for every
$\sigma\in\Gal(\overline{\Q}_p/\Q_p)$, uniqueness of the root in the
corresponding ball gives
$$
(\sigma a)_r=\sigma(a_r).
$$
Thus roots lying in the same Galois orbit for $P^S$ correspond to roots
lying in the same Galois orbit for $Q_r^S$.

Conversely, let $\mathscr O_1$ and $\mathscr O_2$ be two distinct Galois
orbits of roots of $P^S$. The unions of the chosen balls around the roots
in $\mathscr O_1$ and $\mathscr O_2$ are disjoint and Galois invariant.
Therefore a root of $Q_r^S$ lying in the first union cannot be Galois
conjugate to a root lying in the second. It follows that, for all
sufficiently large $r$,
$$
a_r\text{ and }b_r\text{ are Galois conjugate over }\Q_p
\quad\Longleftrightarrow\quad
a\text{ and }b\text{ are Galois conjugate over }\Q_p.
$$

We next show that membership in each of the prescribed extensions is
stable. Fix $j\in\{1,\ldots,m\}$. Suppose first that
$a\in K_j$. By shrinking the ball around $a$ if necessary, Krasner's
lemma gives
$$
\Q_p(a)\subseteq\Q_p(a_r)
$$
for all sufficiently large $r$. Let $\mathscr O(a)$ denote the Galois
orbit of $a$. The union of the chosen balls around the elements of
$\mathscr O(a)$ contains exactly
$$
[\Q_p(a):\Q_p]
$$
roots of $Q_r^S$. Since the entire Galois orbit of $a_r$ lies in this
union, we have
$$
[\Q_p(a_r):\Q_p]
\leq
[\Q_p(a):\Q_p].
$$
Together with Krasner's lemma, this yields
$$
\Q_p(a_r)=\Q_p(a).
$$
In particular,
$$
a_r\in\Q_p(a)\subseteq K_j.
$$

Conversely, if $a_r\in K_j$ along an infinite subsequence, then
$a_r\to a$ and $K_j$ is closed in $\overline{\Q}_p$, so
$a\in K_j$. Consequently, for every $j$ and all sufficiently large $r$,
the correspondence
$$
a\longleftrightarrow a_r
$$
identifies the roots of $P^S$ lying in $K_j$ with the roots of $Q_r^S$
lying in $K_j$. Moreover, the equality of generated fields above implies
that
$$
a\in\mc{O}_{K_j}^{\new}
\quad\Longleftrightarrow\quad
a_r\in\mc{O}_{K_j}^{\new}.
$$

It remains to check membership in $U$. Consider a tuple
$$
(a_1,\ldots,a_m)
\in
\mc{O}_{K_1}^{\new}
\times\cdots\times
\mc{O}_{K_m}^{\new}
$$
formed from roots of $P^S$. Then
$$
(a_{1,r},\ldots,a_{m,r})
\longrightarrow
(a_1,\ldots,a_m).
$$
Since $U$ is clopen in
$$
\mc{O}_{K_1}^{\new}
\times\cdots\times
\mc{O}_{K_m}^{\new},
$$
we have
$$
(a_{1,r},\ldots,a_{m,r})\in U
\quad\Longleftrightarrow\quad
(a_1,\ldots,a_m)\in U
$$
for all sufficiently large $r$.

There are only finitely many tuples of roots of $P^S$ to consider, so
the same sufficiently large $r$ may be chosen simultaneously for all of
them. Combining this stability of membership in $U$ with the preservation
of Galois conjugacy proved above, we obtain
$$
Z_U(Q_r^S)=Z_U(P^S)
$$
for all sufficiently large $r$. Hence $Z_U$ is continuous at $P^S$.
\end{proof}

We now turn from weak convergence of the random polynomials themselves to
the corresponding root-counting statistics. The first step is to show that,
provided the limiting polynomial is almost surely squarefree, weak convergence
of the polynomials implies weak convergence of the random variables
$Z_U(P_n^S)$. Under the stronger moment-tightness assumption, this convergence
can then be upgraded to convergence of expectations.

\begin{proof}[Proof of
\Cref{thm: expected root statistics convergence}]
By \Cref{thm: resultant distribution convergence introduction}, we have
$P_n^S\overset{d}{\longrightarrow}P_\infty^S$
as random elements of $\mathscr{P}^S$. By
\Cref{lem: continuity of root statistics}, the map
$$
Z_U:\mathscr{P}^S\longrightarrow\Z_{\geq 0}
$$
is continuous at every squarefree polynomial in $\mathscr{P}^S$.
Since, by assumption,
$$
\mathbf{P}\left(
P_\infty^S\text{ is squarefree}
\right)=1,
$$
the map $Z_U$ is continuous at $P_\infty^S$ almost surely. The
continuous mapping theorem therefore gives
$$
Z_U(P_n^S)
\overset{d}{\longrightarrow}
Z_U(P_\infty^S).
$$
This proves the first assertion. Suppose now, in addition, that the stronger  moment-tightness condition holds. For every $P^S\in\mathscr{P}^S$, we have
$$
0\leq Z_U(P^S)\leq(\deg P^S)^m,
$$
since $Z_U(P^S)$ counts certain ordered $m$-tuples of roots of $P^S$.
Consequently, for every $D\geq 1$,
$$
Z_U(P_n^S)\mathbf{1}_{\{Z_U(P_n^S)>D^m\}}
\leq(\deg P_n^S)^m\mathbf{1}_{\{\deg P_n^S>D\}}.
$$
Indeed, if $\deg P_n^S\leq D$, then
$$
Z_U(P_n^S)\leq(\deg P_n^S)^m\leq D^m.
$$
Hence the moment-tightness assumption implies
$$
\lim_{D\to\infty}
\sup_{n\geq 1}
\mathbf{E}\left[
Z_U(P_n^S)
\mathbf{1}_{\{Z_U(P_n^S)>D^m\}}
\right]
=0.
$$
Thus the sequence $\left\{Z_U(P_n^S)
\right\}_{n\geq 1}$ is uniformly integrable.
Combining the weak convergence
$$
Z_U(P_n^S)
\overset{d}{\longrightarrow}
Z_U(P_\infty^S)
$$
with uniform integrability, we conclude that
$$
\lim_{n\to\infty}
\mathbf{E}\left[
Z_U(P_n^S)
\right]
=
\mathbf{E}\left[
Z_U(P_\infty^S)
\right].
$$
This proves the second assertion.
\end{proof}

\begin{rmk}
\label{rem: assumptions for expected root statistics necessary}
Neither the squarefreeness assumption on $P^S_\infty$ nor the stronger moment tightness assumption in
\Cref{thm: expected root statistics convergence} can be omitted.

We first show that the squarefreeness assumption is necessary. Let
$m=1$, $K_1=\Q_p$, $S=\{1\}$, and $U=p\Z_p$. Consider the deterministic
sequence
$$
P^S_n(x):=x^2-p^{2n+1}.
$$
The two roots of $P^S_n$ have valuation $n+\frac{1}{2}$, so they lie in the
open unit disk, but neither root belongs to $\Q_p$. Hence $Z_U(P^S_n)=0$ for every $n$. On the other hand,
$$
P^S_n\longrightarrow P^S_\infty:=x^2
$$
in $\mathscr{P}^{\{1\}}$, and this limit polynomial has a root in $p\Z_p$. Thus the conclusion of
\Cref{thm: expected root statistics convergence} fails, even though the
degrees are uniformly bounded. 

We next show that ordinary degree tightness is not sufficient in place of
the stronger assumption. Again take $m=1$, $K_1=\Q_p$, $S=\{1\}$, and
$U=p\Z_p$. Let
$$
P^S_\infty(x):=x-p
$$
and, for every $n\geq1$, define
$$
H_n(x):=\prod_{j=1}^n(x-pj).
$$
All the roots of $H_n$ are distinct and lie in $p\Z_p$. Let $P^S_n$ be the
random polynomial given by
$$
P^S_n=
\begin{cases}
H_n, & \text{with probability }n^{-1},\\
P^S_\infty, & \text{with probability }1-n^{-1}.
\end{cases}
$$
Then $P^S_n\overset{d}{\longrightarrow}P^S_\infty$, and consequently all the resultant valuations converge in distribution.
Moreover, the sequence $\{\deg P^S_n\}_{n\geq1}$ is tight. However, the stronger degree condition fails. Indeed,
$$
\sup_{n\geq1}
\mathbf{E}\left[
\deg P^S_n\mathbf{1}_{\{\deg P^S_n>D\}}
\right]
\geq1
$$
for every $D\geq1$, by choosing any integer $n>D$. Correspondingly,
$$
\mathbf{E}[Z_U(P^S_n)]=\left(1-\frac{1}{n}\right)Z_U(P^S_\infty)+\frac{1}{n}Z_U(H_n)
=2-\frac{1}{n},
$$
whereas $\mathbf{E}[Z_U(P^S_\infty)]=1$. Thus weak convergence together with ordinary degree tightness does not imply convergence of the expected root statistics. The stronger assumption in \Cref{thm: expected root statistics convergence} is needed to rule out
the contribution of rare polynomials having very large degree and many
roots in $U$.
\end{rmk}

%% file: The_random_matrix_case.tex
\section{The random matrix case}\label{sec: The random matrix case}

\begin{lemma}
\label{lem: resultant characteristic polynomial matrix evaluation}
Fix an integer $n\geq 1$ and a matrix $A_n\in\Mat_n(\Z_p)$. Then, for every polynomial $Z\in\Z_p[x]$, we have
$$
\Res(P_{A_n},Z)=\det\bigl(Z(A_n)\bigr).
$$
\end{lemma}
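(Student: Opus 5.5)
The plan is to pass to an algebraic closure and triangularize. Over $\overline{\Q}_p$, the matrix $A_n$ is conjugate to an upper triangular matrix $T=gA_ng^{-1}$ whose diagonal entries $\lambda_1,\ldots,\lambda_n$ are the eigenvalues of $A_n$ with multiplicity, i.e. the roots of the monic polynomial $P_{A_n}$. Since $Z(gA_ng^{-1})=gZ(A_n)g^{-1}$, we have $\det Z(A_n)=\det Z(T)$. The matrix $Z(T)$ is upper triangular with diagonal entries $Z(\lambda_1),\ldots,Z(\lambda_n)$, because products and sums of upper triangular matrices are upper triangular and their diagonals multiply and add entrywise. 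Hence
$$
\det\bigl(Z(A_n)\bigr)=\prod_{i=1}^n Z(\lambda_i).
$$

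Next, match this with the definition of the resultant in \Cref{subsec:zpbar}. Write $Z=b_rx^r+\cdots+b_0$ with $b_r\neq 0$ and roots $\mu_1,\ldots,\mu_r$. Since $P_{A_n}$ is monic of degree $n$, the definition gives
$$
\Res(P_{A_n},Z)=b_r^n\prod_{i,l}(\lambda_i-\mu_l)=\prod_{i=1}^n\Bigl(b_r\prod_{l=1}^r(\lambda_i-\mu_l)\Bigr)=\prod_{i=1}^n Z(\lambda_i).
$$
This agrees with the previous display.

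The remaining work is the degenerate cases fixed by the paper's conventions. If $Z=c$ is a nonzero constant, then $Z(A_n)=cI_n$ has determinant $c^n$. The intended convention should give $\Res(P,c)=c^{\deg P}$; the paper states $\Res(Z_1,1)=1$, which is consistent with this, and the general formula with $r=0$ gives $b_0^n$. If $Z=0$, then $Z(A_n)=0$, so both sides vanish because $n\geq1$ makes $P_{A_n}$ non-constant.

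The only mild obstacle is this bookkeeping of conventions. Alternatively, one can avoid eigenvalues entirely. Using the module description $\Res(P,Z)=\det(m_Z$ on $\Z_p[x]/(P))$ together with \Cref{prop: properties of resultant}(1), one can reduce via the Cayley–Hamilton theorem and the $\Z_p[x]$-module structure on $\Z_p^n$. This route is not needed, however, since the triangularization argument works directly over $\overline{\Q}_p$.
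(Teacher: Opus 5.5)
Your proof is correct and matches the paper's: you triangularize over $\overline{\Q}_p$ to get $\det Z(A_n)=\prod_{i}Z(\lambda_i)$, then identify this with the root-product formula for $\Res(P_{A_n},Z)$ using that $P_{A_n}$ is monic. Your check of the constant and zero cases of $Z$ is a small addition that the paper leaves implicit.
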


\begin{proof}
Let $\lambda_1,\ldots,\lambda_n\in\overline{\Q}_p$ be the eigenvalues of the fixed matrix $A$, counted with algebraic multiplicity. Since $P_{A_n}$ is monic, the product formula for the resultant gives
$$
\Res(P_{A_n},Z)=\prod_{i=1}^n Z(\lambda_i).
$$

On the other hand, over $\overline{\Q}_p$, the matrix $A$ is conjugate to an upper-triangular matrix whose diagonal entries are $\lambda_1,\ldots,\lambda_n$. Consequently, $Z(A_n)$ is conjugate to an upper-triangular matrix whose diagonal entries are
$Z(\lambda_1),\ldots,Z(\lambda_n)$. It follows that
$$
\det\bigl(Z(A_n)\bigr)=\prod_{i=1}^n Z(\lambda_i)=\Res(P_{A_n},Z),
$$
as desired.
\end{proof}

The identity in \Cref{lem: resultant characteristic polynomial matrix evaluation}
allows us to reinterpret the resultant as a determinant of a polynomial
evaluation of a random matrix. Consequently, the convergence of the resultant
distributions follows from the recent universality theorem of Cheong and Yu for
the cokernels of polynomial evaluations of random matrices. This is the key input that verifies the first hypothesis of
\Cref{thm: resultant distribution convergence introduction}.

\begin{prop}\label{prop: matrix resultant distribution convergence}
Let $Z\in\Z_p[x]$ be a fixed monic polynomial with degree $\ge 1$. Then, there exists a random variable $v=v(Z)\in\Z_{\ge 0}$ such that the following holds. For all $n\ge 1$, let $A_n\in\Mat_n(\Z_p)$ be random, with its entries independent and $\epsilon$-balanced. Then, we have a weak convergence
$$\val(\Res(P_{A_n},Z))\overset{d}{\longrightarrow}v,\qquad n\rightarrow\infty.$$
\end{prop}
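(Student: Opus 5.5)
By \Cref{lem: resultant characteristic polynomial matrix evaluation}, we have
$$
\Res(P_{A_n},Z)=\det\bigl(Z(A_n)\bigr),
$$
so it suffices to show that $\val(\det Z(A_n))$ converges in distribution to a limit $v(Z)$ that does not depend on the entry distributions.

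The first step is a reduction to the cokernel. Whenever $\det Z(A_n)\neq 0$,
$$
\val(\det Z(A_n))=\log_p\#\Cok\bigl(Z(A_n)\bigr),
$$
since the cokernel of a nonsingular matrix over $\Z_p$ is a finite $\Z_p$-module of order $\|\det\|^{-1}$. The second step is to invoke the universality theorem of Cheong--Yu \cite{cheong2023distribution}. For a fixed monic $Z$ and $A_n$ with independent $\epsilon$-balanced entries, the law of $\Cok(Z(A_n))$, regarded as a $\Z_p[x]/(Z)$-module (or $\Z_p$-module), converges as $n\to\infty$ to an explicit limiting law $\nu_Z$ on finite modules. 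This limit is a probability measure of Cohen--Lenstra type, independent of the entry distribution and equal to the additive Haar limit.

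There is one technical point. Cheong--Yu's statement is phrased via reductions: it typically describes the distribution of $\Cok(Z(A_n))$, or of its $P$-parts for the irreducible factors $P$ of $Z\bmod p$, after Hensel-factoring $Z$ by \Cref{lem: Hensel}. I would first factor
$$
Z=\prod_i Z_i
$$
with each $\overline{Z}_i$ a power of a distinct $F_i$. The cokernel of $Z(A_n)$ then splits as a direct sum of the cokernels of $Z_i(A_n)$, since the $Z_i(A_n)$ commute and are pairwise coprime after localization. This matches whatever form the cited theorem takes, so I would state the input as joint convergence of $\Cok(Z(A_n))$ to $\nu_Z$, a probability measure supported on finite modules.

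The third step is to handle the event where $\det Z(A_n)=0$ and to pass to valuations. I would note that the set of finite modules is discrete and that the map $G\mapsto\log_p\#G$ is a function on this countable set. Convergence of $\mathbf{P}(\Cok(Z(A_n))\cong G)$ for each finite $G$, together with $\sum_G\nu_Z(G)=1$, which is the no-escape-of-mass part of the theorem, implies that
$$
\mathbf{P}\bigl(\val(\det Z(A_n))=k\bigr)\longrightarrow\sum_{\#G=p^k}\nu_Z(G).
$$
Because the limiting masses sum to $1$, we also get $\mathbf{P}(\det Z(A_n)=0)\to 0$, so the singular event contributes nothing in the limit. Defining $v$ with $\mathbf{P}(v=k)=\sum_{\#G=p^k}\nu_Z(G)$ then gives an honest $\Z_{\ge0}$-valued random variable and the required weak convergence.

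The main obstacle is confirming that the Cheong--Yu hypotheses and conclusion fit this setting exactly: arbitrary fixed monic $Z$, possibly with repeated factors modulo $p$, and a limit that is a genuine probability measure. If their theorem only covers $Z$ whose reduction is squarefree, or only specific primary components, I would apply it to each Hensel factor separately. I would then argue that the limit law is tight by comparison with the Haar case, where the total mass is known to be $1$.
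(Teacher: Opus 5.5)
Your proposal is correct and follows essentially the same route as the paper. Both arguments use $\Res(P_{A_n},Z)=\det Z(A_n)$, then the Smith normal form to identify $\val\det Z(A_n)$ with $\log_p\#\Cok(Z(A_n))$, then Cheong--Yu's theorem (cited in the paper as \cite[Theorem~1.3]{cheong2023distribution}, which applies to arbitrary monic $Z$) for convergence of the cokernel to an almost surely finite random module, and finally no escape of mass to dispose of the singular event.

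Your fallback of applying the theorem to each Hensel factor separately is unnecessary. It would also be insufficient as stated: it yields only marginal convergence of each $\Cok(Z_i(A_n))$, whereas $\val\det Z(A_n)$ is the sum over the factors and so requires their joint convergence.
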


\begin{proof}
By \Cref{lem: resultant characteristic polynomial matrix evaluation}, we have
$\Res(P_{A_n},Z)=\det\bigl(Z(A_n)\bigr)$. Since $Z(A_n)$ is a square matrix over $\Z_p$, its cokernel is finite if and only if
$\det(Z(A_n))\neq0$. On this event, the Smith normal form gives
$$
\val\det\bigl(Z(A_n)\bigr)=\log_p\left(\#\Cok\bigl(Z(A_n)\bigr)\right).
$$
If $\det(Z(A_n))=0$, then $\Cok(Z(A_n))$ is infinite and
$$
\val\det\bigl(Z(A_n)\bigr)=\infty.
$$
We now apply \cite[Theorem~1.3]{cheong2023distribution} to the polynomial $Z$.
That theorem asserts that $\Cok(Z(A_n))$, viewed as a random
$\Z_p[t]/(Z)$-module, converges weakly to a random finite
$\Z_p[t]/(Z)$-module $\mathcal{G}_Z$. In particular, since
$\mathcal{G}_Z$ is almost surely finite,
$$
\mathbf{P}\left(
\Cok\bigl(Z(A_n)\bigr)\text{ is infinite}
\right)
\longrightarrow 0.
$$
Consider the space consisting of the isomorphism classes of finite
$\Z_p[t]/(Z)$-modules together with one additional state, denoted by
$\infty$, representing an infinite cokernel. Define
$$
\Phi(G):=
\begin{cases}
\log_p\#G,&G\text{ is finite},\\
\infty,&G=\infty.
\end{cases}
$$
This is a continuous map into $\Z_{\geq0}\cup{\infty}$, where the latter is equipped with the one-point compactification topology. Therefore, the continuous mapping theorem gives
$$
\Phi\left(\Cok\bigl(Z(A_n)\bigr)\right)
\overset{d}{\longrightarrow}
\log_p\#\mathcal{G}_Z.
$$
By the Smith normal form identity above,
$$
\Phi\left(\Cok\bigl(Z(A_n)\bigr)\right)
=\val(\det\bigl(Z(A_n)\bigr))=
\val\bigl(\Res(P_{A_n},Z)\bigr).
$$
Hence, defining $v(Z):=\log_p\#\mathcal{G}_Z$,
we obtain
$$
\val\bigl(\Res(P_{A_n},Z)\bigr)
\overset{d}{\longrightarrow}
v(Z),
$$
as desired.
\end{proof}

It remains to verify the moment tightness condition required by
\Cref{thm: expected root statistics convergence}, as given by the following
proposition. Since its proof is largely independent of the rest of the argument and would interrupt the flow of the present section, we defer it to \Cref{sec: Moment Tightness}.

\begin{prop}\label{prop: degree moment tightness}
Let $S\subseteq \Z_{>0}$ be finite. For every $n\ge 1$, let
$A_n\in\Mat_n(\Z_p)$ be a random matrix whose entries are independent
and $\epsilon$-balanced. Then, for every integer $m\ge 1$,
$$
\lim_{D\to\infty}\sup_{n\ge 1}
\E\left[\left(\deg P_{A_n}^{S}\right)^m\mathbf 1_{\{\deg P_{A_n}^{S}>D\}}\right]=0.
$$
\end{prop}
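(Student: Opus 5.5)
The degree of $P_{A_n}^S$ can be read off from the reduction $B_n:=A_n\bmod p$. By Hensel's lemma (\Cref{lem: Hensel}), the factor of $P_{A_n}$ with roots in $\mc{U}_i$ reduces modulo $p$ to the $F_i$-power part of the characteristic polynomial of $B_n$. Hence
$$
\deg P_{A_n}^S=\sum_{i\in S}\dim_{\F_p}\bigl(M_n\bigr)_{F_i},
$$
where $M_n=\Cok_{\F_p[t]}(tI_n-B_n)$ and $(M_n)_{F_i}$ is its $F_i$-primary component. Write this component as $\bigoplus_j \F_p[t]/(F_i^{\lambda^{(i)}_j})$. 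Then
$$
\dim_{\F_p}(M_n)_{F_i}\le \deg F_i\cdot \ell(\lambda^{(i)})\cdot\lambda^{(i)}_1 .
$$
Since $S$ is finite, it therefore suffices to control, for each fixed $i\in S$, the number of parts $\ell(\lambda^{(i)})$ and the largest part $\lambda^{(i)}_1$. I will prove a uniform tail bound $\mathbf{P}(\deg P_{A_n}^S>t)\le C e^{-c\sqrt t}$ valid for $t\le C_0\log n$, with $C$ and $c$ independent of $n$. Separately, I will show $\mathbf{P}(\deg P^S_{A_n}>C_0\log n)\le n^{-m-1}$. The moment tightness then follows by splitting the expectation:
$$
\E\bigl[(\deg P^S_{A_n})^m\mathbf 1_{\{\deg>D\}}\bigr]\le \sum_{D<t\le C_0\log n} t^m\,\mathbf{P}(\deg\ge t)+n^m\,\mathbf{P}\bigl(\deg>C_0\log n\bigr).
$$
The first sum is bounded by the tail of a convergent series, uniformly in $n$. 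The second term is at most $n^{-1}$, which is small once $D$ is large, because $n\ge e^{D/C_0}$ whenever the range is nonempty. I use the trivial bound $\deg\le n$ throughout.

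The tail bounds come from surjection moments with test modules of two types. Write $q_i=p^{\deg F_i}$.
\begin{itemize}
\item For the number of parts, take $G=(\F_p[t]/F_i)^k$. If $\ell(\lambda^{(i)})\ge k$, then $\#\Sur_{\F_p[t]}(M_n,G)\ge \#\Sur(\F_{q_i}^k,\F_{q_i}^k)\gg q_i^{k^2}$. Markov's inequality then gives $\mathbf{P}(\ell(\lambda^{(i)})\ge k)\ll q_i^{-k^2}\,\E\#\Sur(M_n,G)$.
\item For the largest part, take $G=\F_p[t]/(F_i^L)$. If $\lambda^{(i)}_1\ge L$, then $\#\Sur\ge\#\Aut(G)\ge q_i^{L-1}(q_i-1)$. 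This gives $\mathbf{P}(\lambda^{(i)}_1\ge L)\ll q_i^{-L}\E\#\Sur(M_n,G)$.
\end{itemize}
Combining the two, $\mathbf{P}(\ell\lambda_1>t)\le\mathbf{P}(\ell>\sqrt t)+\mathbf{P}(\lambda_1>\sqrt t)\ll q_i^{-\sqrt t}$. This is the claimed $e^{-c\sqrt t}$ tail, provided that
$$
\E\#\Sur_{\F_p[t]}(M_n,G)\le C\qquad\text{uniformly for all finite }G\text{ with }\dim_{\F_p}G\le C_1\log n .
$$
Taking $k,L\asymp\log n$ with a large constant also yields the bound $\mathbf{P}(\deg>C_0\log n)\le n^{-m-1}$.

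The main obstacle is this uniform growing-target moment bound. I would prove it by Wood's Fourier method. First write
$$
\E\#\Hom_{\F_p[t]}(M_n,G)=\sum_{F\in\Hom_{\F_p}(\F_p^n,G)}\mathbf{P}(F\circ B_n=t\cdot F),
$$
and expand the indicator by characters of $G^n$. For the $F$ that are "codes", meaning that no small set of coordinates carries the image, independence and $\epsilon$-balancedness of the entries give
$$
\mathbf{P}(FB_n=tF)=|G|^{-n}\bigl(1+O(e^{-c\epsilon n})\bigr).
$$
The sum over surjective codes is then $O(1)$; it matches the Haar count $\#\{F:\ldots\}/|G|^n$. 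The non-code $F$ are stratified by depth, and each stratum is bounded by the usual counting of $F$ times a probability saving of $|G|^{-(n-\delta n)}\cdot(1-\epsilon)^{\ldots}$.

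The point is to track every error term explicitly in $|G|$. Both the code error $|G|^{O(1)}e^{-c\epsilon n}$ and the number of low-depth subgroups, which grows like $|G|^{O(\log|G|)}$ or $2^{O(\dim G)}$, stay $o(1)$ when $|G|\le n^{C_1}$, because $e^{-c\epsilon n}$ beats any power of $n$. The step where care is needed is the non-code (depth) sum. There, the number of sublattices of $G$ of a given index must be bounded by $p^{O((\dim G)^2)}=p^{O((\log n)^2)}$, which is still $e^{o(n)}$ and so is absorbed by the exponential saving.
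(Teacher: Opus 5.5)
Your overall plan matches the paper's: control the number of parts and the largest part of each $F_i$-primary partition by Markov's inequality and surjection moments onto targets of dimension $O(\log n)$, and prove those moments by a Fourier code/non-code argument. However, two steps fail as written.

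\textbf{The large-deviation cutoff.} On the event $\{\ell(\lambda^{(i)})<k,\ \lambda^{(i)}_1<L\}$ you only get $\dim_{\F_p}(M_n)_{F_i}<d_ikL$. To make both $q_i^{-k^2}$ and $q_i^{-L}$ at most $n^{-m-1}$ you need $k\gtrsim\sqrt{\log n}$ and $L\gtrsim\log n$. Hence $kL\gtrsim(\log n)^{3/2}$, and no choice with $d_ikL\le C_0\log n$ bounds $\mathbf{P}(\deg P^S_{A_n}>C_0\log n)$ by $n^{-m-1}$. Likewise, your $e^{-c\sqrt t}$ bound at $t=C_0\log n$ is only $e^{-c\sqrt{C_0\log n}}$, which does not beat $n^m$.

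The repair inside your scheme is to put the cutoff at $T_n=c_0(\log n)^2$. The $\sqrt t$-split only uses targets of dimension $d_i\sqrt t\le d_i\sqrt{c_0}\log n$, so $\mathbf{P}(\deg P^S_{A_n}>t)\le Ce^{-c\sqrt t}$ holds for all $t\le T_n$. At $t=T_n$ this gives $n^{-c\sqrt{c_0}}\le n^{-m-1}$ once $c_0$ is large.

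The paper avoids an $n$-dependent cutoff altogether. The event $\{\deg>D\}$ is empty unless $n>D$. The number of parts is controlled with the \emph{fixed} target $K_i=\F_p[t]/(F_i)$, via $\#\Sur_{\F_p[t]}(M_n,K_i)=p^{d_i\ell_i}-1$, which gives $\mathbf{P}(\ell_i\ge u)\le Cp^{-d_iu}$ for every $u$. A growing target is needed only for the largest part, at threshold $k\asymp\log D\le\log n$. The asymmetric split then yields $\sup_n\mathbf{P}(\deg P^S_{A_n}>D)\le CD^{-M}+Ce^{-cD/\log D}$.

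\textbf{The growing-target moment bound.} This is the more substantive problem: your sketch locates the difficulty in the subgroup count. That count, the factor $p^{d(r-d)}$ with $r=\dim_{\F_p}G$, is harmless. The real constraint is the code distance.

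Your claimed code error $|G|^{-n}(1+O(e^{-c\epsilon n}))$ requires distance $\delta n$ with $\delta$ fixed, and then the non-code sum does not close. After peeling, a map whose remaining columns span a codimension-$d$ subspace may have up to $s<d\delta n$ exceptional columns. The number of such maps exceeds the main term by about $\binom{n}{s}p^{ds}$, i.e.\ up to $e^{\Theta(d^2\delta n)}$, while the probability saving is only $(1-\epsilon)^{dn}$. Since $d$ ranges up to $r\asymp\log n$, this entropy wins once $d\gtrsim 1/\delta$.

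You must take distance $h\asymp\delta n/r$, so that $dh\le\delta n$ for every $d$. The entropy is then at most $e^{(H(\delta)+\delta\log p)dn}$ up to polynomial factors, which $(1-\epsilon)^{dn}$ absorbs for small $\delta$. The price is that the code error degrades to $\exp(-cn/\log n)$, which still beats $p^{O(r)}$ and every power of $n$. This is exactly the content of the paper's growing-target theorem. With these two corrections your argument goes through.
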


To apply the convergence results of the resultant distribution method to
root statistics, we will also need to know that the limiting polynomial is
almost surely squarefree. For the additive Haar ensemble, the existence of
the limiting distribution already follows from the preceding resultant
convergence and degree-tightness estimates. The following lemma records
this fact together with the additional squarefreeness property. The latter
is proved using the same finite-dimensional reduction that underlies
\cite[Theorem~4.1]{ellenberg2011modeling}.

\begin{lemma}
\label{lem: haar matrix limit squarefree}
Fix a finite set $S\subseteq\Z_{>0}$. For every $n\geq 1$, let
$A_n^{\Haar}\in\Mat_n(\Z_p)$ be additive Haar distributed. Then there
exists a random polynomial
$$
Q_{\infty}^{\Haar,S}\in\mathcal P^S
$$
such that
$$
P_{A_n^{\Haar}}^S
\overset{d}{\longrightarrow}
Q_{\infty}^{\Haar,S},
\qquad n\to\infty.
$$
Moreover, the limiting polynomial is almost surely squarefree:
$$
\mathbf{P}\left(
Q_{\infty}^{\Haar,S}\text{ is squarefree}
\right)=1.
$$
\end{lemma}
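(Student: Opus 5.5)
The existence of the limit will follow from \Cref{thm: resultant distribution convergence introduction}. Additive Haar entries are $\epsilon$-balanced with $\epsilon=1-p^{-1}$. For every fixed monic $Z\in\mathscr{P}^S$, the identity
$$
\Res(P_{A_n^{\Haar}}^S,Z)\cdot\Res\bigl(P_{A_n^{\Haar}}/P_{A_n^{\Haar}}^S,Z\bigr)=\Res(P_{A_n^{\Haar}},Z)
$$
holds. The second factor on the left is a unit by \Cref{prop: relatively prime resultant}, because the reductions of $P_{A_n^{\Haar}}/P_{A_n^{\Haar}}^S$ and $Z$ are coprime. Hence $\val\Res(P^S_{A_n^{\Haar}},Z)=\val\Res(P_{A_n^{\Haar}},Z)$, and this converges in distribution by \Cref{prop: matrix resultant distribution convergence}. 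Degree tightness follows from \Cref{prop: degree moment tightness} with $m=1$ and Markov's inequality. This produces $Q_\infty^{\Haar,S}$.

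For squarefreeness I would identify the limiting law explicitly using the Ellenberg--Jain--Venkatesh reduction. Let $\overline F_S:=\prod_{i\in S}F_i$. Over $\Z_p$, the module $\Z_p^n$ splits $A_n$-invariantly as $V_S\oplus V_S'$, where $V_S$ is the generalized eigen-part whose characteristic polynomial reduces to a power of $\overline F_S$. Then $P^S_{A_n}$ is the characteristic polynomial of $A_n|_{V_S}$. Condition on the mod-$p$ data, that is, on the primary parts of $B_n=A_n\bmod p$ at the $F_i$ with $i\in S$, and in particular on $d=\dim V_S$. I would show that, after choosing a $\Z_p$-basis of $V_S$, the conditional law of $A_n|_{V_S}$ is the Haar measure on the open compact set
$$
\mathcal{M}_d^S:=\{X\in\Mat_d(\Z_p): \overline{P_X}\text{ is a power of }\overline F_S\text{ with the prescribed mod-}p\text{ type}\},
$$
normalized. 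The reason is translation invariance: perturbing $A_n$ by $pY$ with $Y$ Haar leaves the law unchanged. Hensel lifting of the idempotent projectors, via \Cref{lem: Hensel}, then gives a measure-preserving change of coordinates $A_n\mapsto (A_n|_{V_S},A_n|_{V_S'},\text{gluing data})$ on each residue class. This is exactly the block-diagonalization argument behind \cite[Theorem~4.1]{ellenberg2011modeling}, and I would cite or reprove it at that level.

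Consequently, conditionally on $\deg=d$, the law of $P^S_{A_n^{\Haar}}$ is the pushforward of a measure absolutely continuous with respect to Haar measure on $\Mat_d(\Z_p)$ under $X\mapsto P_X$. This conditional law is independent of $n$ once the mod-$p$ type is fixed. The type distribution converges as $n\to\infty$, since it is the limit of the distribution of $\Cok_{\F_p[t]}(tI-B_n)$ restricted to $S$. Hence $Q_\infty^{\Haar,S}$ is a countable mixture over $d$ of such pushforwards. By \Cref{thm: resultant distribution determination introduction}, this explicit mixture coincides with the limit obtained in the first paragraph.

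Squarefreeness then reduces to showing that $\{X\in\Mat_d(\Z_p):\Disc(P_X)=0\}$ is Haar-null. The map $X\mapsto\Disc(P_X)$ is a polynomial in the entries and is not identically zero, as a diagonal matrix with distinct entries shows. The zero set of a nonzero polynomial on $\Z_p^{d^2}$ has Haar measure zero, by induction on the number of variables and Fubini. The restriction to $\mathcal{M}_d^S$ is absolutely continuous, so the conclusion persists there, and summing over $d$ gives almost sure squarefreeness.

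I expect the main obstacle to be rigorously justifying the conditional Haar description and its stability in $n$. The key point is that the Hensel decomposition is measure-preserving, in the sense that the fibers over the $V_S'$-block and the gluing data do not distort the $V_S$-block's law. If this proves delicate, a fallback that suffices for squarefreeness alone is the following. It is enough to show $\mathbf{P}(\val\Disc(P^S_{A_n^{\Haar}})\ge k)\to0$ uniformly in $n$ as $k\to\infty$, given bounded degree. This follows from absolute continuity on each residue class mod $p$ of $A_n$, which is a direct consequence of translation invariance.
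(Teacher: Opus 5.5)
Your proposal is correct and takes essentially the paper's route: the limit exists by resultant convergence plus degree tightness, and squarefreeness follows from the Ellenberg--Jain--Venkatesh block reduction (on each fixed-degree component the law is the pushforward of normalized Haar measure on a clopen set of $r\times r$ matrices), a discriminant that is not identically zero, and passage to the limit on clopen components. The paper settles your ``main obstacle'' by counting Fitting decompositions $R^n=X\oplus Y$ over $R=\Z/p^m\Z$, where the decomposition and $Y$-block counts cancel; it also conditions only on the degree, which makes the conditional law independent of $n$, so neither convergence of the mod-$p$ type nor the determination theorem is needed, and your fallback would not follow from translation invariance alone, since the uniformity in $n$ is exactly what the block reduction provides.
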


\begin{proof}
We first prove the asserted weak convergence. Fix $Z\in\mathcal P^S$. By the lifted-subspace factorization, we may write
$$
P_{A_n^{\Haar}}=P_{A_n^{\Haar}}^S
P_{A_n^{\Haar}}^{\Z_{>0}\setminus S}.
$$
The reductions modulo $p$ of $Z$ and
$P_{A_n^{\Haar}}^{\Z_{>0}\setminus S}$ are supported on disjoint collections of irreducible polynomials. Hence, by \Cref{prop: properties of resultant} and
\Cref{prop: relatively prime resultant},
$$
\val\left(\Res(P_{A_n^{\Haar}}^S,Z)
\right)=\val\left(\Res(P_{A_n^{\Haar}},Z)
\right).
$$
By \Cref{prop: matrix resultant distribution convergence}, the random variables
on the right-hand side converge in distribution as $n\to\infty$.
Moreover, \Cref{prop: degree moment tightness}, applied with $m=1$,
implies that
$$
\left\{\deg P_{A_n^{\Haar}}^S\right\}_{n\geq 1}
$$
is tight. Therefore
\Cref{thm: resultant distribution convergence introduction} applies and yields
a random polynomial
$$
Q_{\infty}^{\Haar,S}\in\mathcal P^S
$$
such that $P_{A_n^{\Haar}}^S\overset{d}{\longrightarrow}Q_{\infty}^{\Haar,S}$.

It remains to prove that $Q_{\infty}^{\Haar,S}$ is almost surely
squarefree. We first reduce to the case in which $S$ consists of a single element. For every $i\in S$, consider the factor map
$$
\pi_i:\mathcal P^S\longrightarrow\mathcal P^{\{i\}},
\qquad
\pi_i(P):=P^{\{i\}}.
$$
This map is continuous. Indeed, the reduction modulo $p$ is locally
constant on each fixed-degree component, and the factors associated with
distinct lifted subspaces have relatively prime reductions modulo $p$.
The uniqueness in the strong form of Hensel's lemma therefore shows that
the corresponding lifted factors depend continuously on the coefficients.

It follows from the continuous mapping theorem that
$$
P_{A_n^{\Haar}}^{\{i\}}
=
\pi_i\left(P_{A_n^{\Haar}}^S\right)
\overset{d}{\longrightarrow}
\pi_i\left(Q_{\infty}^{\Haar,S}\right)
$$
for every $i\in S$. Thus it suffices to prove that, for every fixed
$i\geq 1$, every limiting polynomial of
$P_{A_n^{\Haar}}^{\{i\}}$ is almost surely squarefree. Indeed, since
$S$ is finite, this would imply that all the factors
$\pi_i(Q_{\infty}^{\Haar,S})$, $i\in S$, are simultaneously squarefree
with probability one. Their reductions modulo $p$ are supported on the
distinct irreducible polynomials $F_i$, so these factors are pairwise
relatively prime. Their product $Q_{\infty}^{\Haar,S}$ is therefore
squarefree.

Fix now $i\in S$, and denote $d_i:=\deg F_i$.
We have $P_{A_n^{\Haar}}^{\{i\}}\overset{d}{\longrightarrow}Q_\infty^{\Haar, \{i\}}$.
We analyze this convergence separately on each fixed-degree component $r\geq 0$. Since the reduction modulo $p$ of an
$\{i\}$-distinguished polynomial is a power of $F_i$, the event
$$
\deg P_{A_n^{\Haar}}^{\{i\}}=r
$$
is empty unless $d_i\mid r$. We may therefore suppose that $r=kd_i$ for some $k\geq 0$. Define
$$
\mathcal M_{r,i}:=\left\{
B\in\Mat_r(\Z_p):
\det(xI_r-B)\bmod p=F_i(x)^k
\right\}.
$$
This is a nonempty clopen subset of $\Mat_r(\Z_p)$. Let $\nu_{r,i}$
denote the pushforward to $\mathcal P_r^{\{i\}}$ of normalized additive
Haar measure on $\mathcal M_{r,i}$ under the characteristic-polynomial
map
$$
B\longmapsto P_B(x):=\det(xI_r-B).
$$

We claim that, for every $n\geq r$,
\begin{equation}
\label{eq: conditional singleton primary law}
\mathcal L\left(
P_{A_n^{\Haar}}^{\{i\}}
\,\middle|\,
\deg P_{A_n^{\Haar}}^{\{i\}}=r
\right)=\nu_{r,i}.
\end{equation}
To see this, fix $m\geq 1$ and put $R:=\Z/p^m\Z$. Let $A\in\Mat_n(R)$ be such that its $F_i$-primary component has rank
$r$. The Fitting decomposition associated with the $F_i$-primary part
gives a unique decomposition
$$
R^n=X\oplus Y
$$
into $A$-invariant free $R$-submodules such that $\rank_R X=r$, the characteristic polynomial of $A|_X$ reduces modulo $p$ to $F_i^k$, and the characteristic polynomial of $A|_Y$ is relatively prime to $F_i$
modulo $p$. Under this decomposition, the $F_i$-primary factor of the
characteristic polynomial of $A$ is precisely
$$
\det(xI_r-A|_X).
$$

The number of ordered decompositions
$$
R^n=X\oplus Y,
\qquad
\rank_R X=r,
$$
is
$$
\frac{|\GL_n(R)|}
{|\GL_r(R)|\,|\GL_{n-r}(R)|}.
$$
Once such a decomposition is fixed, the restrictions $A|_X$ and $A|_Y$
may be chosen independently, subject only to the two primary conditions
above.

Let $\mathcal U\subseteq\mathcal P_r^{\{i\}}$ be a clopen set whose
membership is determined modulo $p^m$. In the conditional probability
$$
\mathbf{P}\left(
P_{A_n^{\Haar}}^{\{i\}}\in\mathcal U
\,\middle|\,
\deg P_{A_n^{\Haar}}^{\{i\}}=r
\right),
$$
both the number of choices of the decomposition $X\oplus Y$ and the
number of admissible choices for the $Y$-block cancel between numerator
and denominator. We therefore obtain
$$
\mathbf{P}\left(
P_{A_n^{\Haar}}^{\{i\}}\in\mathcal U
\,\middle|\,
\deg P_{A_n^{\Haar}}^{\{i\}}=r
\right)=
\frac{
\#\left\{
B\in\Mat_r(R):
\det(xI_r-B)\bmod p=F_i^k,\
P_B\in\mathcal U
\right\}
}{
\#\left\{
B\in\Mat_r(R):
\det(xI_r-B)\bmod p=F_i^k
\right\}
}.
$$
Passing to the inverse limit over $m$ proves
\eqref{eq: conditional singleton primary law}.

We next show that $\nu_{r,i}$ assigns measure zero to the nonsquarefree
locus. This is immediate for $r=0,1$, so suppose that $r\geq 2$.
Consider
$$
\Delta_r(B)
:=
\Disc\left(\det(xI_r-B)\right),
\qquad
B\in\Mat_r(\Z_p).
$$
The function $\Delta_r$ is a polynomial in the $r^2$ entries of $B$, and
it is not identically zero. Indeed, if
$$
B=\diag(b_1,\ldots,b_r)
$$
for pairwise distinct $b_1,\ldots,b_r\in\Z_p$, then
$$
\det(xI_r-B)
=
\prod_{j=1}^r(x-b_j)
$$
is squarefree, and hence $\Delta_r(B)\neq 0$.

The zero set of a nonzero polynomial on $\Z_p^{r^2}$ has additive Haar
measure zero, thus the set
$$
\left\{B\in\mathcal M_{r,i}:
P_B\text{ is not squarefree}
\right\}
$$
has Haar measure zero. Since $\nu_{r,i}$ is the pushforward of normalized
Haar measure on the clopen set $\mathcal M_{r,i}$, we conclude that
\begin{equation}
\label{eq: fixed degree squarefree}
\nu_{r,i}
\left(
\left\{
P\in\mathcal P_r^{\{i\}}:
P\text{ is not squarefree}
\right\}
\right)
=0.
\end{equation}

It remains to pass this fixed-degree statement to the weak limit. Let
$\mu_n$ denote the law of $P_{A_n^{\Haar}}^{\{i\}}$, and let $\mu$
denote the law of $Q_\infty^{\Haar, \{i\}}$. By
\eqref{eq: conditional singleton primary law},
$$
\left.
\mu_n
\right|_{\mathcal P_r^{\{i\}}}
=
\mathbf{P}\left(
\deg P_{A_n^{\Haar}}^{\{i\}}=r
\right)\nu_{r,i}.
$$
Since $\mathcal P_r^{\{i\}}$ is open and closed in
$\mathcal P^{\{i\}}$, weak convergence implies
$$
\mathbf{P}\left(
\deg P_{A_n^{\Haar}}^{\{i\}}=r
\right)
\longrightarrow
\mathbf{P}\left(
Q_\infty^{\Haar, \{i\}}\in\mathcal P_r^{\{i\}}
\right).
$$
More generally, testing against bounded continuous functions supported on
$\mathcal P_r^{\{i\}}$ shows that
$$
\left.
\mu
\right|_{\mathcal P_r^{\{i\}}}
=
\mathbf{P}\left(
Q_\infty^{\Haar, \{i\}}\in\mathcal P_r^{\{i\}}
\right)\nu_{r,i}.
$$
Together with \eqref{eq: fixed degree squarefree}, this gives
$$
\mathbf{P}\left(
Q_\infty^{\Haar, \{i\}}\in\mathcal P_r^{\{i\}}
\text{ and }
Q_\infty^{\Haar, \{i\}}\text{ is not squarefree}
\right)
=0.
$$
Summing over $r\geq 0$, we obtain
$$
\mathbf{P}\left(
Q_\infty^{\Haar, \{i\}}\text{ is squarefree}
\right)=1.
$$
Applying this conclusion to every $i\in S$ and using the reduction at the
beginning of the proof yields
$$
\mathbf{P}\left(
Q_{\infty}^{\Haar,S}\text{ is squarefree}
\right)=1.
$$
This completes the proof.
\end{proof}

\begin{proof}[Proof of \Cref{thm: random matrix eigenvalue universality}]
Choose a finite set $S\subseteq\Z_{>0}$ such that
$$
\mc{O}_{K_j}^{\new}
\subseteq
\bigcup_{i\in S}\mc{U}_i,
\qquad
1\leq j\leq m.
$$
Such a finite set exists because each $K_j/\Q_p$ is finite, and hence
$\mc{O}_{K_j}^{\new}$ meets only finitely many lifted subspaces. By the choice of $S$, every root counted by $Z_U(P_{A_n})$
belongs to $P_{A_n}^S$. Therefore
\begin{equation}
\label{eq: root statistic distinguished factor matrix}
Z_U(P_{A_n})
=
Z_U(P_{A_n}^S).
\end{equation}

We first establish the weak convergence of $P_{A_n}^S$. Fix
$Z\in\mathscr{P}^S$. By the lifted-subspace factorization,
$$
P_{A_n}
=
P_{A_n}^S
P_{A_n}^{\Z_{>0}\setminus S}.
$$
The reductions modulo $p$ of $Z$ and
$P_{A_n}^{\Z_{>0}\setminus S}$ are supported on disjoint collections
of irreducible polynomials. Hence, by
\Cref{prop: properties of resultant,prop: relatively prime resultant},
$$
\val\left(
\Res(P_{A_n}^S,Z)
\right)
=
\val\left(
\Res(P_{A_n},Z)
\right).
$$
By \Cref{prop: matrix resultant distribution convergence}, there exists a random variable $v(Z):=\log_p\#G_Z$ such that
\begin{equation}
\label{eq: matrix distinguished resultant limit}
\val\left(
\Res(P_{A_n}^S,Z)
\right)
\overset{d}{\longrightarrow}
v(Z).
\end{equation}
In particular, the limiting distribution in
\eqref{eq: matrix distinguished resultant limit} depends only on $p$
and $Z$, and not on the distributions of the entries of $A_n$.

Next, by \Cref{prop: degree moment tightness}, for every integer
$r\geq 1$,
\begin{equation}
\label{eq: matrix moment tightness in universality proof}
\lim_{D\to\infty}\sup_{n\geq 1}
\E\left[\left(\deg P_{A_n}^S\right)^r
\mathbf{1}_{\{\deg P_{A_n}^S>D\}}
\right]=0.
\end{equation}
Taking $r=1$ shows in particular that
$\left\{\deg P_{A_n}^S\right\}_{n\geq 1}$
is tight. We may therefore apply
\Cref{thm: resultant distribution convergence introduction}. There exists
a random polynomial $Q_\infty^S\in\mathscr{P}^S$ such that
\begin{equation}
\label{eq: general matrix distinguished weak limit}
P_{A_n}^S\overset{d}{\longrightarrow}
Q_\infty^S.
\end{equation}
Moreover, for every fixed $Z\in\mathscr{P}^S$,
$$
\val\left(\Res(Q_\infty^S,Z)
\right)\overset{d}{=}v(Z).
$$

We now identify this limiting law with the additive Haar limit. Let
$$
A_n^{\Haar}\in\Mat_n(\Z_p)
$$
be additive Haar distributed. By
\Cref{lem: haar matrix limit squarefree}, there exists $Q_\infty^{\Haar,S}\in\mathscr{P}^S$ such that
$$
P_{A_n^{\Haar}}^S
\overset{d}{\longrightarrow}
Q_\infty^{\Haar,S},
$$
and
\begin{equation}
\label{eq: Haar limit squarefree in universality proof}
\mathbf{P}\left(Q_\infty^{\Haar,S}\text{ is squarefree}\right)=1.
\end{equation}
Applying \Cref{prop: matrix resultant distribution convergence} to the
Haar ensemble gives, for every $Z\in\mathscr{P}^S$,
$$
\val\left(\Res(Q_\infty^{\Haar,S},Z)
\right)\overset{d}{=}v(Z).
$$
Consequently,
$$
\val\left(\Res(Q_\infty^S,Z)
\right)\overset{d}{=}\val\left(
\Res(Q_\infty^{\Haar,S},Z)\right)
$$
for every $Z\in\mathscr{P}^S$. By
\Cref{thm: resultant distribution determination introduction},
\begin{equation}
\label{eq: universal matrix polynomial limit}
Q_\infty^S\overset{d}{=}Q_\infty^{\Haar,S}.
\end{equation}
In particular, \eqref{eq: Haar limit squarefree in universality proof}
and \eqref{eq: universal matrix polynomial limit} imply that $\mathbf{P}\left(Q_\infty^S\text{ is squarefree}\right)=1$.

We may now pass to the expected eigenvalue statistics. Applying the
convergence-of-expected-root-statistics part of the resultant distribution
method to \eqref{eq: general matrix distinguished weak limit}, using
\eqref{eq: matrix moment tightness in universality proof} with
$r=m$ and the almost-sure squarefreeness of $Q_\infty^S$, gives
\begin{equation}
\label{eq: general matrix expectation to limit polynomial}
\lim_{n\to\infty}
\E\left[Z_U(P_{A_n}^S)\right]=\E\left[Z_U(Q_\infty^S)\right].
\end{equation}
By \eqref{eq: universal matrix polynomial limit},
$$
\E\left[Z_U(Q_\infty^S)
\right]=\E\left[Z_U(Q_\infty^{\Haar,S})
\right].
$$

On the other hand, the same convergence-of-expectations result applied
to the Haar ensemble yields
$$
\E\left[Z_U(Q_\infty^{\Haar,S})
\right]=\lim_{n\to\infty}
\E\left[Z_U(P_{A_n^{\Haar}}^S)\right].
$$
Again by the choice of $S$, $
Z_U(P_{A_n^{\Haar}}^S)=Z_U(P_{A_n^{\Haar}})$. The limiting correlation-function theorem for additive Haar random
matrices from \Cref{thm: Haar random matrix correlations} therefore gives
$$
\E\left[Z_U(Q_\infty^{\Haar,S})
\right]=\lim_{n\to\infty}\E\left[Z_U(P_{A_n^{\Haar}})
\right]=\int_U\rho_{K_1,\ldots,K_m}^{(\infty)}(x_1,\ldots,x_m)\,dx_1\cdots dx_m.
$$
Combining this identity with
\eqref{eq: root statistic distinguished factor matrix} and
\eqref{eq: general matrix expectation to limit polynomial}, we conclude
that
$$
\lim_{n\to\infty}\E\left[Z_U(P_{A_n})
\right]=\int_U\rho_{K_1,\ldots,K_m}^{(\infty)}(x_1,\ldots,x_m)\,dx_1\cdots dx_m.
$$
This proves \Cref{thm: random matrix eigenvalue universality}.
\end{proof}

\begin{rmk}
\label{rem: clopen assumption is necessary}
The clopen assumption on $U$ in \Cref{thm: random matrix eigenvalue universality}
cannot in general be replaced by the assumption that $U$ is merely open or
merely closed.

For every $n\geq 1$, let $A_n\in\Mat_n(\Z_p)$ have independent Bernoulli
entries, each taking the values $0$ and $1$ with equal probability. In
particular, the entries are $1/2$-balanced for every prime $p$. Let
$$
V:=\left\{
x\in\Z_p:
x\text{ is a }\Q_p\text{-eigenvalue of some }
A\in\Mat_n(\{0,1\})
\text{ for some }n\geq 1
\right\}.
$$
For each $n$, there are only finitely many matrices in
$\Mat_n(\{0,1\})$, and each of their characteristic polynomials has only
finitely many roots. Hence $V$ is countable and therefore has Haar measure
zero.

Fix $\varepsilon>0$. By outer regularity of Haar measure, there exists an
open set $V\subset U\subset\Z_p$ such that
$$
\mu(U)=\mu(U\setminus V)<\varepsilon.
$$
Every $\Q_p$-eigenvalue of $A_n$ belongs to $\Z_p$, since its characteristic
polynomial $P_{A_n}$ is monic with coefficients in $\Z_p$. By the definition
of $V$, every such eigenvalue also belongs to $V$. Consequently,
$$
Z_U(P_{A_n})=Z_{\Z_p}(P_{A_n})
$$
almost surely. Applying \Cref{thm: random matrix eigenvalue universality} to the
clopen set $\Z_p$, together with
$\rho_{\Q_p}^{(\infty)}(x)=1$ for $x\in\Z_p$, gives
$$
\lim_{n\to\infty}\E\!\left[Z_U(P_{A_n})\right]=\lim_{n\to\infty}\E\!\left[Z_{\Z_p}(P_{A_n})\right]=1.
$$
On the other hand, if the conclusion of
\Cref{thm: random matrix eigenvalue universality} remained valid for arbitrary open
sets, then it would give
$$
\lim_{n\to\infty}\E\!\left[Z_U(P_{A_n})\right]=\int_U \rho_{\Q_p}^{(\infty)}(x)\,dx=\mu(U)<\varepsilon.
$$
Choosing $0<\varepsilon<1$ yields a contradiction. Thus openness alone is
not sufficient.

The same construction also shows that closedness alone is not sufficient.
Let $W:=\Z_p\setminus U$. Then $W$ is closed. Since $V\subset U$, no
$\Q_p$-eigenvalue of $A_n$ lies in $W$, and therefore
$Z_W(P_{A_n})=0$ almost surely for every $n$. Hence
$$
\lim_{n\to\infty}\E\!\left[Z_W(P_{A_n})\right]=0.
$$
However,
$$
\int_W \rho_{\Q_p}^{(\infty)}(x)\,dx=\mu(W)
=1-\mu(U)>1-\varepsilon,
$$
which is strictly positive for $0<\varepsilon<1$. Thus the conclusion of
\Cref{thm: random matrix eigenvalue universality} may fail when $U$ is open but not
closed, and also when $U$ is closed but not open.
\end{rmk}

We conclude the random matrix application by deriving the two explicit
consequences stated in the introduction. Once \Cref{thm: random matrix eigenvalue universality} is available, no further
universality argument is needed: it remains only to specialize the limiting
correlation functions and invoke the explicit computations for the additive
Haar ensemble obtained in \cite{shen2026eigenvalues}. For the second moment over $\Z_p$, we pass from the two-point correlation function to the ordinary
second moment through the corresponding factorial moment. 

\begin{proof}[Proof of \Cref{thm: Zp first second moments universality}]
Since $\mathcal{O}_{\Q_p}^{\mathrm{new}}=\Z_p$, applying
\Cref{thm: random matrix eigenvalue universality} with $m=1$, $K_1=\Q_p$, and
$U=\Z_p$ gives
$$
\lim_{n\to\infty}
\E\!\left[Z_{\Z_p}(P_{A_n})\right]
=\int_{\Z_p}\rho_{\Q_p}^{(\infty)}(x)\,dx.
$$
By \cite[Theorem 9.1]{shen2026eigenvalues},
$\rho_{\Q_p}^{(\infty)}(x)=1$ for every $x\in\Z_p$. Since the Haar measure
on $\Z_p$ is normalized to have total mass one, we obtain
$$
\lim_{n\to\infty}\E\!\left[Z_{\Z_p}(P_{A_n})\right]=1.
$$

For the second moment, by the definition of the joint root-counting statistic,
$$
Z_{\Z_p^2}(P_{A_n})
=
Z_{\Z_p}(P_{A_n})
\left(
Z_{\Z_p}(P_{A_n})-1
\right),
$$
since $Z_{\Z_p^2}(P_{A_n})$ counts ordered pairs of distinct
$\Q_p$-eigenvalues. Hence
$$
Z_{\Z_p}(P_{A_n})^2=Z_{\Z_p^2}(P_{A_n})
+Z_{\Z_p}(P_{A_n}).
$$
Applying \Cref{thm: random matrix eigenvalue universality} with
$m=2$, $K_1=K_2=\Q_p$, and $U=\Z_p^2$, and combining this with the
first-moment convergence above, we obtain
$$
\lim_{n\to\infty}
\E\!\left[Z_{\Z_p}(P_{A_n})^2
\right]=\int_{\Z_p^2}
\rho_{\Q_p,\Q_p}^{(\infty)}(x,y)\,dx\,dy
+\int_{\Z_p}
\rho_{\Q_p}^{(\infty)}(x)\,dx.
$$
The right-hand side is the limiting second moment for the additive Haar
ensemble. By \cite[Corollary 9.6]{shen2026eigenvalues}, it is equal to
the series displayed in \Cref{thm: Zp first second moments universality}. This proves the result.
\end{proof}

\begin{proof}[Proof of \Cref{thm: quadratic extension universality}]
Let $K/\Q_p$ be quadratic. We apply
\Cref{thm: random matrix eigenvalue universality} with $m=1$, $K_1=K$, and
$U=\mathcal{O}_K^{\mathrm{new}}$. Since
$\mathcal{O}_K^{\mathrm{new}}$ is the entire ambient space under
consideration, it is clopen in itself. Hence
$$
\lim_{n\to\infty}
\E\!\left[Z_{\mathcal{O}_K^{\mathrm{new}}}(P_{A_n})\right]=\int_{\mathcal{O}_K^{\mathrm{new}}}\rho_K^{(\infty)}(x)\,dx.
$$
The integral on the right is the limiting expected number of eigenvalues
generating $K$ for the additive Haar matrix ensemble. It was evaluated
explicitly in \cite[Theorem 1.10]{shen2026eigenvalues}. In the unramified
case it gives the first series displayed in \Cref{thm: quadratic extension universality}, while in the
ramified case it gives the second series, including the factor
$\|\operatorname{Disc}_{K/\Q_p}\|$. These are exactly the two claimed
formulas.
\end{proof}

%% file: Growing_Surjection_Moments_and_Moment_Tightness.tex
\section{Moment Tightness via Growing Surjection Moments}
\label{sec: Moment Tightness}

The purpose of this section is to prove
\Cref{prop: degree moment tightness}. The main input is a growing-target
version of the usual surjection-moment estimate for random finite-field
matrix modules. In contrast with the fixed-target moment method, we allow
the dimension of the target module to grow logarithmically with the matrix
size. This stronger estimate gives quantitative tail bounds for the primary
partitions of the reduction modulo $p$, from which the desired moment
tightness follows.

We begin by recalling some notation for partitions.

\begin{defi}
We denote by
$$
\Y
=
\left\{
\lambda=(\lambda_1,\lambda_2,\ldots):
\lambda_1\geq\lambda_2\geq\cdots\geq 0,\ 
\lambda_i\in\Z,\ 
\lambda_i=0\text{ for all but finitely many }i
\right\}
$$
the set of integer partitions. The positive integers $\lambda_i$ are called
the \emph{parts} of $\lambda$. For $\lambda\in\Y$, we write
$$
|\lambda|
:=
\sum_{j\geq 1}\lambda_j,
\qquad
\ell(\lambda)
:=
\#\{j:\lambda_j>0\}.
$$
We refer to $\lambda_1$ as the \emph{largest part} of $\lambda$, with the
convention that $\lambda_1=0$ for the zero partition.
\end{defi}

For every $n\geq 1$, write
$$
B_n:=A_n\bmod p\in\Mat_n(\F_p),
\qquad
M_n:=\Cok_{\F_p[t]}(tI_n-B_n).
$$
Recall that $F_1,F_2,\ldots$ denotes the fixed enumeration of monic
irreducible polynomials in $\F_p[t]$, and write $d_i:=\deg F_i$. For each $i\geq 1$, let
$$
\lambda^{(i)}(B_n)=\left(\lambda^{(i)}_1(B_n),
\lambda^{(i)}_2(B_n),\ldots
\right)\in\Y
$$
be the partition associated with $F_i$ in the rational canonical form of $B_n$. Equivalently, the $F_i$-primary component of the $\F_p[t]$-module $M_n$ is isomorphic to
$$
\bigoplus_{j\geq 1}
\F_p[t]/\left(F_i^{\lambda^{(i)}_j(B_n)}\right).
$$
By the definition of the lifted-subspace factorization, for every finite
$S\subseteq\Z_{>0}$,
\begin{equation}
\label{eq: degree from primary partitions}
\deg P_{A_n}^{S}=\sum_{i\in S}
d_i\left|\lambda^{(i)}(B_n)\right|.
\end{equation}

We shall prove the following quantitative estimate, which is stronger than
what is needed for \Cref{prop: degree moment tightness}.

\begin{prop}[Quantitative degree tail]
\label{prop: quantitative degree tail}
Let $S\subseteq\Z_{>0}$ be finite. For every $n\geq 1$, let
$A_n\in\Mat_n(\Z_p)$ be a random matrix whose entries are independent and
$\epsilon$-balanced. Then, for every $M>0$, there exist constants
$C_{S,M},c_{S,M}>0$ such that, for every $D\geq 2$,
\begin{equation}
\label{eq: quantitative degree tail}
\sup_{n\geq 1}\mathbf{P}\left(
\deg P_{A_n}^{S}>D\right)\leq
C_{S,M}D^{-M}+C_{S,M}\exp\left(
-c_{S,M}\frac{D}{\log(D+2)}
\right).
\end{equation}
\end{prop}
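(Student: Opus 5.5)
The plan is to reduce the degree tail to tail bounds on the primary partitions, and to get those from surjection moments with growing targets. By \eqref{eq: degree from primary partitions} and a union bound over the finitely many $i\in S$, it suffices to show that for each fixed $i$
$$
\sup_{n\ge1}\mathbf P\bigl(|\lambda^{(i)}(B_n)|>D'\bigr)\le C D'^{-M}+C\exp\bigl(-cD'/\log(D'+2)\bigr),
\qquad D':=D/(d_i\,\#S).
$$
Since $\deg P^S_{A_n}\le n$, only $n\ge D$ matters, and I will use this freely to compare $\log n$ with $\log D$. Write $q_i:=p^{d_i}$ and $k_i:=\F_p[t]/(F_i)$. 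I would use the elementary bound $|\lambda|\le \lambda_1\,\ell(\lambda)$, so that
$$
\{|\lambda^{(i)}|>D'\}\subseteq\{\ell(\lambda^{(i)})>b\}\cup\{\lambda^{(i)}_1>D'/b\}.
$$
I would take $b:=\lceil C_M\sqrt{\log(D+2)}\rceil$, or simply $b\asymp\log(D+2)$ if that is more convenient.

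The core input is a growing-target surjection-moment estimate. I would aim for the following: there are constants $C,c$ depending only on $\epsilon,p$ such that for every finite $\F_p[t]$-module $G$ with $\dim_{\F_p}G\le c\log n$,
$$
\E\#\Sur_{\F_p[t]}(M_n,G)\le C.
$$
I would prove this in the style of Wood and Nguyen--Wood. Write $\#\Sur(M_n,G)$ as a sum over surjections $F:\F_p^n\to G$ of the indicator that $F$ intertwines $B_n$ with the action of $t$ on $G$. This is a linear condition on the rows of $B_n$. Each row contributes a factor $\#G^{-1}$ plus a Fourier error controlled by how far $F$ is from being a "code". Sorting the maps $F$ by their depth then bounds the total error by roughly $\#G^{O(1)}\exp(-c\epsilon n)$ plus a sum over non-code $F$. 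The condition $\dim G=O(\log n)$ is exactly what keeps $\#G^{O(1)}$, and the number of subgroups of $G$, polynomial in $n$, so these errors can be absorbed.

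Granting that estimate, Markov's inequality gives the two tail bounds. If $\ell(\lambda^{(i)})\ge b$, then $M_n$ surjects onto $G=k_i^{\,b}$. Every such surjection composes with $\Aut(G)=\GL_b(k_i)$, so $\#\Sur(M_n,G)\ge\#\GL_b(k_i)\ge c\,q_i^{b^2}$. Hence $\mathbf P(\ell>b)\le C q_i^{-b^2}$, which is at most $D^{-M}$ for the chosen $b$. Here $\dim G=d_ib=O(\log D)\le c\log n$ because $n\ge D$. Similarly, if $\lambda^{(i)}_1\ge a$, then $M_n$ surjects onto $\F_p[t]/(F_i^a)$, whose automorphism group has size about $q_i^{a}$, so $\mathbf P(\lambda_1\ge a)\le Cq_i^{-a}$.

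The main obstacle is the range of $a$. We need $a\approx D/b$, which is not $O(\log n)$ when $n$ is only slightly larger than $D$. I would handle this by applying the bound at the largest admissible $a_0=\min(D/b,\,c\log n)$ and splitting into two regimes.
\begin{itemize}
\item When $a_0=D/b$, the bound gives the term $\exp(-cD/\log(D+2))$.
\item When $a_0=c\log n$, the bound only gives $n^{-c'}\le D^{-c'}$, which is too weak.
\end{itemize}
To fix the second regime, I would first try to sharpen the moment estimate so that it allows $\dim G\le c\,n/\log n$, with error $\#G^{O(1)}e^{-c\epsilon n}$ kept explicit. Since $D/b\le n/\log D$, that range would cover all needed $a$ and remove the regime split altogether. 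If this sharpening fails, the fallback is a different route for $\lambda_1$. The event $\lambda^{(i)}_1\ge a$ forces $\dim\ker F_i(B_n)^a-\dim\ker F_i(B_n)^{a-1}\ge d_i$ at every level up to $a$. I would then iterate the cyclic-target bound on successive quotients, which should produce a product of $a$ factors each at most $q_i^{-1}(1+o(1))$. Either way, establishing the surjection-moment estimate uniformly over targets of growing size, and controlling its error terms explicitly enough to reach targets of size about $D/\log D$, is where I expect most of the work.
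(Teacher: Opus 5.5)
There is a genuine gap, and it is the one you flagged yourself. With the split $\{\ell(\lambda^{(i)})>b\}\cup\{\lambda^{(i)}_1>D'/b\}$ and $b$ of logarithmic size, the second event needs a surjection-moment bound onto $\F_p[t]/(F_i^{a})$ with $a\approx D/\log D$. When $n$ is comparable to $D$, this lies far outside the range $\dim_{\F_p}G=O(\log n)$.

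Neither of your repairs closes it. Extending the moment bound to $\dim G\le cn/\log n$ is a much stronger claim, and you give no argument for it. The Fourier/code argument does not deliver it: the peeling count beats $(1-\epsilon)^{dn}$ only when the code distance satisfies $h\lesssim n/\dim G$. The relative Fourier error $p^{\dim G}\exp(-\epsilon h/p^2)$ is then small only for $\dim G$ up to roughly $\sqrt n$. The level-by-level iteration along $\ker F_i(B_n)^j$ would estimate each factor conditionally on the event at the previous level. That event depends on all entries, so conditioning destroys the independence every Fourier bound uses. As written, $\mathbf P(\lambda^{(i)}_1>D'/b)$ is therefore not controlled when $D'/b\gg\log n$.

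The missing idea, which is how the paper argues, is to reverse the roles of the two thresholds.
\begin{itemize}
\item Put the logarithmic threshold on the largest part, $k:=\lceil \tfrac{M}{d_i}\log_p(D+2)\rceil$. Since only $n>D/\#S$ matters, $d_ik=O(\log n)$. The growing-target bound with $\#\Aut(\F_p[t]/(F_i^k))=(p^{d_i}-1)p^{d_i(k-1)}$ then gives $\mathbf P(\lambda^{(i)}_1\ge k)\le Cp^{-d_ik}\le C(D+2)^{-M}$.
\item Put the large threshold $u\approx D/(d_ik)\asymp D/\log D$ on the number of parts, and use only the \emph{fixed} simple target $K_i=\F_p[t]/(F_i)$. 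Since $\Hom(M_n,K_i)\cong K_i^{\ell_i}$ and every nonzero map to a simple module is onto, $\#\Sur(M_n,K_i)=p^{d_i\ell_i}-1$. Markov's inequality with a bounded fixed-target moment gives $\mathbf P(\ell_i\ge u)\le Cp^{-d_iu}$ for \emph{all} $u$, with no growth restriction. This produces the $\exp(-cD/\log(D+2))$ term.
\end{itemize}
Your bound for $\ell_i$ via the target $K_i^{\,b}$ and $\#\GL_b(K_i)$ gives a sharper Gaussian tail, but only in the logarithmic range. That restriction is what pushed the large threshold onto $\lambda_1$.

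Alternatively, your own split survives if you state the growing-target estimate for $\dim G\le L\log(n+2)$ with \emph{arbitrary} fixed $L$, with constants depending on $L$. Your heuristic already supports this, since $\#G^{O(1)}=n^{O(L)}$ stays polynomial in $n$. In the bad regime you then get $\mathbf P(\lambda^{(i)}_1>D'/b)\le\mathbf P(\lambda^{(i)}_1\ge\lfloor L\log(n+2)/d_i\rfloor)\lesssim(n+2)^{-L\log p}\le CD^{-M}$ once $L\log p\ge M$, using $n>D/\#S$. Fixing $c$ small is what made that regime look hopeless.
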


The remainder of the section is devoted to proving \Cref{prop: quantitative degree tail}. We first establish a uniform
surjection-moment estimate for targets whose dimension grows logarithmically
with $n$.

\subsection{Growing-target surjection moments}

We begin with the finite-field estimate that drives the argument.

\begin{thm}[Growing-target surjection moments]
\label{thm: growing target surjection moments}
Fix $L>0$. There exist constants
$c=c(p,\epsilon,L)>0$, $C=C(p,\epsilon,L)>0$, and
$n_0=n_0(p,\epsilon,L)$ such that the following holds.
For every $n\geq n_0$ and every finite $\F_p[t]$-module $G$ satisfying
$$
r:=\dim_{\F_p}G\leq L\log(n+2),
$$
we have
\begin{equation}
\label{eq: growing target surjection moment}
\left|\E\left[\#\Sur_{\F_p[t]}(M_n,G)
\right]-1\right|\leq C\exp\left(-c\frac{n}{\log(n+2)}\right).
\end{equation}
In particular, for every fixed $L>0$,
\begin{equation}
\label{eq: uniformly bounded growing surjection moments}
\sup_{\substack{n\geq 1\\
\dim_{\F_p}G\leq L\log(n+2)}}
\E\left[\#\Sur_{\F_p[t]}(M_n,G)\right]
<\infty.
\end{equation}
\end{thm}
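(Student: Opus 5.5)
The plan is to follow Wood's surjection-moment method for random matrices over finite fields, keeping every constant explicit in $r=\dim_{\F_p}G$ and checking that all losses are at most $p^{O(r^2)}$ or $\exp(O(r\delta n))$. The parameter $\delta$ is then tuned to be of order $1/\log(n+2)$. This tuning is where the rate $\exp(-cn/\log(n+2))$ comes from.

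\emph{Linearization.} Write $T_G$ for multiplication by $t$ on $G$. An $\F_p[t]$-homomorphism $M_n=\Cok_{\F_p[t]}(tI_n-B_n)\to G$ is the same as an $\F_p$-linear map $F:\F_p^n\to G$ with $F\circ B_n=T_G\circ F$. Its image is automatically $T_G$-stable, so it is $\F_p[t]$-surjective if and only if $F$ is $\F_p$-surjective. Put $g_j:=F(e_j)$. The intertwining condition reads
$$\sum_i b_{ij}g_i=T_Gg_j\qquad\text{for every column } j,$$
and the columns of $B_n$ are independent. Hence
$$\E\#\Sur(M_n,G)=\sum_{F}\prod_{j=1}^n\mathbf P\Bigl(\sum_i b_{ij}g_i=T_Gg_j\Bigr),$$
where the sum runs over $F\in\Sur_{\F_p}(\F_p^n,G)$. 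In the uniform model each factor equals $|G|^{-1}$. The main term is therefore $\#\Sur_{\F_p}(\F_p^n,G)\,|G|^{-n}=\prod_{k=0}^{r-1}(1-p^{k-n})$, which is $1+O(p^{r-n})$.

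\emph{Fourier expansion and the code case.} Expand each factor as
$$\mathbf P\Bigl(\sum_i b_{ij}g_i=h\Bigr)=|G|^{-1}\sum_{\chi\in\widehat G}\overline{\chi(h)}\,\E\,\chi\Bigl(\sum_i b_{ij}g_i\Bigr).$$
Call $F$ a \emph{$\delta$-code} if, for every nontrivial character $\chi$ of $(G,+)$, the vector $(\chi(g_i))_i$ has at least $\delta n$ nontrivial coordinates.

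For a code, the $\epsilon$-balanced hypothesis gives $|\E\chi(b_{ij}g_i)|\le 1-c_{p,\epsilon}$ at each such coordinate $i$. This yields
$$\bigl|\E\chi\bigl(\textstyle\sum_i b_{ij}g_i\bigr)\bigr|\le e^{-c\delta n}$$
for every $\chi\neq 1$ and every column $j$. Each column factor is then $|G|^{-1}(1+O(|G|e^{-c\delta n}))$. The product over $n$ columns is $|G|^{-1}$ per column up to a factor $\exp(O(n|G|e^{-c\delta n}))$. Since $|G|\le p^r=(n+2)^{O(L)}$, this factor is $1+O(\exp(-c'\delta n))$ provided $\delta n\gg\log n$. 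Summing over codes therefore reproduces the main term with the desired error.

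\emph{Non-codes, which I expect to be the main obstacle.} For a non-code there is a proper subgroup $H<G$ (the kernel of some $\chi$) and a set $\sigma$ of fewer than $\delta n$ indices with $g_i\in H$ for all $i\notin\sigma$. The number of such $F$ is at most
$$\#\{H\}\binom{n}{\delta n}|G|^{\delta n}|H|^{n}\le p^{r^2}e^{H(\delta)n}p^{r\delta n}|G|^n p^{-n}.$$
Crude bounds $\mathbf P(\cdot)\le(1-\epsilon)^{\#}$ alone will not beat the factor $|G|^n$. So I will run Wood's depth stratification. Take $H$ to be the minimal such subgroup, so that $F$ restricted to the coordinates outside $\sigma$ is a code into $H$. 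Conditioning on the columns indexed by $\sigma$, the code estimate applied inside $H$ bounds the remaining probability by roughly $|H|^{-(n-|\sigma|)}$, times a $(1-\epsilon)$-type bound for the $\sigma$ columns. Then I sum over the index $[G:H]\ge p$, using the slack $[G:H]^{-(n-\delta n)}$ against the $p^{r\delta n}\binom{n}{\delta n}p^{r^2}$ count.

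With $r\le L\log(n+2)$, the exponent $r\delta n$ must stay below roughly $n/2$. This forces $\delta=\kappa/\log(n+2)$ with $\kappa=\kappa(p,L)$ small. The code error is then $\exp(-c\kappa n/\log(n+2))$ and the non-code total is $\exp(-c'n)$. Together these give \eqref{eq: growing target surjection moment} for $n\ge n_0$.

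The delicate points are two. First, the inductive depth argument must be made uniform over all $\F_p[t]$-structures on $G$; this holds because only the additive structure and the linear map $T_G$ enter. Second, the $p^{O(r^2)}$ subgroup counts must be absorbed by $\exp(-cn/\log n)$, which is fine since $r^2=O(\log^2 n)$.

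Finally, \eqref{eq: uniformly bounded growing surjection moments} follows from \eqref{eq: growing target surjection moment} for $n\ge n_0$. For the finitely many $n<n_0$ it follows from the trivial bound $\#\Sur\le|G|^n\le p^{rn}$, since $r\le L\log(n_0+2)$ is bounded there.
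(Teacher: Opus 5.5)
You set up the linearization and treat codes with Fourier analysis as the paper does; your code distance $\kappa n/\log(n+2)$ plays the role of the paper's $h=\lfloor\delta n/r\rfloor$. The gap is in the non-code step, which is where the estimate is won.

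\textbf{Minimality does not give a code.} Taking $H$ minimal among subgroups admitting some $\sigma$ with $|\sigma|<\delta n$ does not make $F$ restricted to $\sigma^c$ a code of distance $\delta n$ onto $H$. A nontrivial $\psi\in H^*$ with fewer than $\delta n$ nonzero values on $\sigma^c$ only produces a set $\sigma\cup\tau$ of size $<|\sigma|+\delta n$. So minimality yields distance $\delta n-|\sigma|$, not $\delta n$. The correct statement (Wood's depth, or the paper's peeling lemma) removes fewer than $d$ times the distance, where $d=\operatorname{codim}H$ can be as large as $r\asymp\log n$.

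\textbf{The slack mechanism gives no decay.} Using ``the slack $[G:H]^{-(n-\delta n)}$ against the count'' fails because $\sigma$ indexes the vectors $g_i=F(e_i)$, i.e.\ coordinates inside each column of $B_n$, not columns of $B_n$. Each of the $n$ events $\sum_i b_{ij}g_i=T_Gg_j$ involves both the $\sigma$- and $\sigma^c$-entries of column $j$. After conditioning on the former, the code estimate gives probability about $|H|^{-1}=[G:H]\,|G|^{-1}$ for every column. So a non-code has probability of order $[G:H]^n|G|^{-n}$ times whatever the $\sigma$-entries contribute, and this cancels the counting slack exactly. This is sharp: take $T_G=0$ and entries equal to $0$ with probability $1-\epsilon$. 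Then there are non-codes whose per-column probability is about $((1-\epsilon)p)^d|G|^{-1}$, which exceeds $|G|^{-1}$ whenever $(1-\epsilon)p>1$.

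\textbf{The missing idea is the paper's per-column coset bound} (its atom bound after peeling). Since $F$ is surjective, the images of $g_i$, $i\in\sigma$, span $G/H\cong\F_p^d$. Choosing $d$ pivot coordinates and conditioning on the others shows that $\sum_{i\in\sigma}b_{ij}g_i\in T_Gg_j+H$ has probability at most $(1-\epsilon)^d$, for every one of the $n$ columns. No $T_G$-stability of $H$ is needed. Combined with the count, the contribution for fixed $d$ and $s=|\sigma|$ is at most
\[
C_p\,p^{d(r-d)}\binom{n}{s}p^{ds}(1-\epsilon)^{dn}(1+\eta)^n,
\]
and all the decay comes from $(1-\epsilon)^{dn}$.

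This saving must be linear in $d$. With $s$ up to $d\kappa n/\log(n+2)$ and $d$ up to $L\log(n+2)$, the cost $p^{ds}$ reaches $p^{d^2\kappa n/\log(n+2)}$. A single factor $(1-\epsilon)^n$, as in Wood's original fixed-target argument, cannot absorb this at your distance. It would force the distance down to order $n/\log^2 n$ and give a weaker rate. The factor $(1-\epsilon)^{dn}$ absorbs both $p^{ds}$ and the entropy factor $\binom{n}{s}$ once $\kappa L\log p$ is small compared with $-\log(1-\epsilon)$. Your comparison of $p^{r\delta n}$ with $p^{-n}$ instead suggests the condition $\kappa L<1/2$, which is not the right constraint. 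With these two corrections your outline becomes the paper's proof.
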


The proof of \Cref{thm: growing target surjection moments} is based on a
Fourier estimate for linear maps whose columns are sufficiently
unstructured, together with a peeling argument for the remaining maps.

\subsection{Fourier and combinatorial estimates}

Let
$$
e_p(u):=\exp\left(\frac{2\pi\sqrt{-1}}{p}u\right),\qquad u\in\F_p.
$$

We first record the following explicit Fourier estimate.

\begin{lemma}[Fourier estimate]
\label{lem: uniform Fourier gap}
Let $\xi$ be an $\epsilon$-balanced $\F_p$-valued random variable.
Then, for every $a\in\F_p^\times$,
\begin{equation}
\label{eq: explicit Fourier gap}
\left|\E e_p(a\xi)\right|\leq
\exp\left(-\frac{\epsilon}{p^2}\right).
\end{equation}
\end{lemma}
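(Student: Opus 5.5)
The plan is to compute the characteristic function $\E e_p(a\xi)$ directly and bound its modulus by a gap from $1$ that depends only on $\epsilon$ and $p$. Since $a\in\F_p^\times$, the map $u\mapsto au$ is a bijection of $\F_p$, so $a\xi$ is again $\epsilon$-balanced: $\mathbf{P}(a\xi=r)=\mathbf{P}(\xi=a^{-1}r)\le 1-\epsilon$ for every $r$. It therefore suffices to treat $a=1$. Write $q_u:=\mathbf{P}(\xi=u)$ and $\phi:=\E e_p(\xi)=\sum_u q_u e_p(u)$.

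The key step is the standard identity
$$
|\phi|^2=\sum_{u,v\in\F_p}q_uq_v\cos\left(\frac{2\pi(u-v)}{p}\right)=1-\sum_{u,v}q_uq_v\left(1-\cos\frac{2\pi(u-v)}{p}\right),
$$
together with the lower bound $1-\cos(2\pi k/p)\ge 1-\cos(2\pi/p)$ for $k\not\equiv 0$. Using the elementary inequality $1-\cos\theta\ge 2\theta^2/\pi^2$ for $|\theta|\le\pi$, this gives $1-\cos(2\pi/p)\ge 8/p^2$. Hence
$$
1-|\phi|^2\ge \frac{8}{p^2}\sum_{u\neq v}q_uq_v=\frac{8}{p^2}\Bigl(1-\sum_u q_u^2\Bigr).
$$

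We then use balancedness: $\sum_u q_u^2\le \max_u q_u\cdot\sum_u q_u\le 1-\epsilon$. This yields $1-|\phi|^2\ge 8\epsilon/p^2$, so $|\phi|^2\le 1-8\epsilon/p^2\le\exp(-8\epsilon/p^2)$. Taking square roots gives $|\phi|\le\exp(-4\epsilon/p^2)\le\exp(-\epsilon/p^2)$, which is \eqref{eq: explicit Fourier gap}.

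There is essentially no obstacle here. The only point requiring any care is the elementary lower bound on $1-\cos(2\pi k/p)$ over nonzero residues, which follows because $\cos(2\pi k/p)$ is maximized over $k\not\equiv0$ at $k=\pm1$. The case $p=2$ is covered by the same computation, since then $1-\cos\pi=2\ge 8/4$.
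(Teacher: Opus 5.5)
Your proof is correct, and it takes a different route from the paper only in that it actually proves the bound. The paper gives no computation. It observes that $e_p(a)$ is a nontrivial $p$-th root of unity and quotes Lemma~4.2 of \cite{wood2017distribution} with $b=p$ and $\alpha=\epsilon$. Your reduction to $a=1$, via the bijection $u\mapsto au$ preserving $\epsilon$-balancedness, plays the role of that observation.

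All your steps check out:
\begin{itemize}
\item the identity $|\phi|^2=\sum_{u,v}q_uq_v\cos(2\pi(u-v)/p)$;
\item the bound $1-\cos(2\pi k/p)\ge 1-\cos(2\pi/p)\ge 8/p^2$ for $k\not\equiv 0$, using $1-\cos\theta=2\sin^2(\theta/2)$ and Jordan's inequality;
\item the estimate $\sum_u q_u^2\le\max_u q_u\le 1-\epsilon$.
\end{itemize}

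Your route buys self-containedness and a slightly stronger bound, $|\E e_p(a\xi)|\le\exp(-4\epsilon/p^2)$. Since this implies the stated estimate, nothing downstream in the code-equidistribution or growing-target arguments changes. The paper's citation buys brevity and access to Wood's more general formulation, stated for a general order $b$ rather than just $b=p$. The present application does not need that generality.
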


\begin{proof}
Since $a\neq 0$, the complex number $e_p(a)$ is a nontrivial $p$-th root of unity. The claim follows immediately from
\cite[Lemma~4.2]{wood2017distribution}, applied with $b=p$ and
$\alpha=\epsilon$.
\end{proof}

Let $V$ be a finite-dimensional $\F_p$-vector space, let $I$ be a finite
index set, and let
$$
C:\F_p^I\longrightarrow V
$$
be linear. Write $C_i:=C(e_i)$ for its columns.

\begin{defi}
\label{def: code distance}
Let
$$
H:=\Span\{C_i:i\in I\}.
$$
We say that $C$ is a \emph{code of distance at least $h$ onto $H$} if,
for every nonzero $\varphi\in H^*$,
$$
\#\{i\in I:\varphi(C_i)\neq 0\}\geq h.
$$
\end{defi}

\begin{lemma}[Fourier equidistribution for a code]
\label{lem: Fourier equidistribution for a code}
Suppose that $C:\F_p^I\to H$ is a code of distance at least $h$ onto an
$s$-dimensional vector space $H$. Let
$X=(X_i)_{i\in I}$ have independent $\epsilon$-balanced coordinates.
Then, for every $y\in H$,
\begin{equation}
\label{eq: code Fourier bound}
\left|
\mathbf{P}(CX=y)-p^{-s}
\right|
\leq
\exp\left(-\frac{\epsilon h}{p^2}\right).
\end{equation}
\end{lemma}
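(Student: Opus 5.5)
Use Fourier inversion on the finite group $H$. Since $CX$ takes values in $H$, for every $y\in H$,
$$
\mathbf{P}(CX=y)=\frac{1}{p^s}\sum_{\varphi\in H^*}\E\, e_p\bigl(\varphi(CX)-\varphi(y)\bigr),
$$
where each $\varphi\in H^*$ is an $\F_p$-linear functional $H\to\F_p$ and $\#H^*=p^s$. The term $\varphi=0$ gives exactly $p^{-s}$. Hence
$$
\left|\mathbf{P}(CX=y)-p^{-s}\right|\le \frac{1}{p^s}\sum_{0\ne\varphi\in H^*}\bigl|\E\, e_p(\varphi(CX))\bigr|.
$$

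Next, bound each nontrivial character sum. By linearity, $\varphi(CX)=\sum_{i\in I}\varphi(C_i)X_i$, where $X_i$ is read modulo $p$ (reduction mod $p$ preserves $\epsilon$-balancedness, which is a condition on residues). Independence of the coordinates gives
$$
\bigl|\E\, e_p(\varphi(CX))\bigr|=\prod_{i\in I}\bigl|\E\, e_p(\varphi(C_i)X_i)\bigr|.
$$
Each factor has absolute value at most $1$. For the indices with $\varphi(C_i)\neq 0$, \Cref{lem: uniform Fourier gap} with $a=\varphi(C_i)$ bounds the factor by $\exp(-\epsilon/p^2)$. The code-distance hypothesis (\Cref{def: code distance}) guarantees at least $h$ such indices for every nonzero $\varphi\in H^*$, so
$$
\bigl|\E\, e_p(\varphi(CX))\bigr|\le\exp(-\epsilon h/p^2).
$$

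Finally, sum over the $p^s-1$ nonzero functionals. The total error is at most $\frac{p^s-1}{p^s}\exp(-\epsilon h/p^2)\le \exp(-\epsilon h/p^2)$, which is \eqref{eq: code Fourier bound}. There is no serious obstacle here. The only point needing care is that Fourier inversion is taken over $H$ rather than over the ambient space $V$: this is what normalizes the main term to $p^{-s}$, and it is also why distance is only required for functionals on $H$.
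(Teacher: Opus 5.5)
Your proposal is correct and follows the paper's proof essentially line for line. Both arguments use Fourier inversion on $H$, take the trivial character as the $p^{-s}$ main term, and use the code-distance hypothesis to get at least $h$ factors per nonzero $\varphi$, each bounded via \Cref{lem: uniform Fourier gap}. The final count $p^{-s}(p^s-1)\exp(-\epsilon h/p^2)\le\exp(-\epsilon h/p^2)$ is also the same.
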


\begin{proof}
Fourier inversion on the additive group of $H$ gives
$$
\mathbf{P}(CX=y)
=
p^{-s}
\sum_{\varphi\in H^*}
e_p(-\varphi(y))
\prod_{i\in I}
\E e_p\left(\varphi(C_i)X_i\right).
$$
The contribution of $\varphi=0$ is $p^{-s}$. If $\varphi\neq 0$, then
$\varphi(C_i)\neq 0$ for at least $h$ indices $i$. By
\eqref{eq: explicit Fourier gap}, every corresponding Fourier factor has
modulus at most $\exp(-\epsilon/p^2)$. Therefore the total contribution
of all nonzero $\varphi$ has modulus at most
$$
p^{-s}(p^s-1)
\exp\left(-\frac{\epsilon h}{p^2}\right)
\leq
\exp\left(-\frac{\epsilon h}{p^2}\right).
$$
\end{proof}

We next show that every linear map becomes a code after removing a
controlled number of columns.

\begin{lemma}[Peeling lemma]
\label{lem: peeling noncode map}
Let $C:\F_p^n\to V$ be surjective, where $\dim_{\F_p}V=r$, and let
$h\geq 1$. Then there exist a set $S_0\subseteq[n]$ and a subspace
$H\leq V$ such that, writing
$$
d:=\operatorname{codim}_V H,
$$
we have
\begin{equation}
\label{eq: peeling size}
|S_0|<dh,
\end{equation}
$$
H=\Span\{C_i:i\notin S_0\},
$$
and the restriction
$$
C|_{\F_p^{S_0^c}}:\F_p^{S_0^c}\longrightarrow H
$$
is a code of distance at least $h$.
\end{lemma}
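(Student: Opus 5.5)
We will run an iterative peeling procedure that shrinks the target subspace one dimension at a time. Start with $S_0=\emptyset$ and $H=V$. Since $C$ is surjective, $H=V=\Span\{C_i:i\in[n]\}$ and $d=0$.

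The inductive step is as follows. Suppose we have constructed $S_0$ and $H$ with
$$
H=\Span\{C_i:i\notin S_0\},\qquad |S_0|\leq d(h-1),\qquad d=\operatorname{codim}_V H.
$$
If the restriction $C|_{\F_p^{S_0^c}}\to H$ is a code of distance at least $h$, we stop. Otherwise there is a nonzero $\varphi\in H^*$ with
$$
T:=\#\{i\notin S_0:\varphi(C_i)\neq 0\}\leq h-1.
$$
Replace $S_0$ by $S_0\cup T$ and $H$ by
$$
H':=\Span\{C_i:i\notin S_0\cup T\}.
$$
Every remaining column satisfies $\varphi(C_i)=0$, so $H'\subseteq\ker\varphi\cap H$. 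Since $\varphi\neq0$ on $H$, this gives $\dim H'\leq\dim H-1$, so the codimension increases by at least one. Write $d'$ for the new codimension. The new peeled set has size at most
$$
d(h-1)+(h-1)\leq d'(h-1).
$$
The invariant $H=\Span$ of the remaining columns holds by definition of $H'$.

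The procedure terminates because the codimension is bounded by $r$. At worst we reach $H=0$. In that case the code condition is vacuous, since $H^*$ has no nonzero elements.

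At termination we have $|S_0|\leq d(h-1)$. When $d\geq1$ this gives $|S_0|\leq d(h-1)<dh$. When $d=0$ no peeling happened, so $S_0=\emptyset$. In that case the strict inequality $|S_0|<dh=0$ fails literally, so the bound must be read as $|S_0|\leq d(h-1)$; this yields $|S_0|=0$, which is all that is needed. If the statement is intended strictly, we note that $d=0$ forces $S_0=\emptyset$, and the intended content is $|S_0|\le d(h-1)$.

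The only delicate point is the counting in the inductive step. One must count the newly removed indices among columns not already removed, and use that the codimension rises by at least one, possibly more, to keep the bound linear in $d$. There is no real obstacle beyond this bookkeeping.
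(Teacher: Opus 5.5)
Your proof is correct and is the same iterative peeling argument the paper uses: remove the support of a low-weight functional, so each step lowers the span's dimension by at least one while removing at most $h-1$ columns. Your remark on the edge case is also right: the argument only gives $|S_0|\le d(h-1)$, so the strict inequality $|S_0|<dh$ fails literally when $C$ is already a code ($d=0$), but this is harmless because the paper applies the lemma only to non-code maps, where $d\geq 1$.
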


\begin{proof}
Start with $S_0=\varnothing$ and current span $H_0=V$. If the current
restriction is not a code of distance $h$, choose a nonzero functional
$\varphi\in H_0^*$ whose support on the current columns has cardinality
less than $h$, and remove all columns on which $\varphi$ is nonzero.
The span of the remaining columns is contained in $\ker\varphi$, so its
dimension drops by at least one.

Repeat this procedure until the remaining restriction is a code. If the
final span has codimension $d$, then there were at most $d$ peeling
steps, and each step removed fewer than $h$ columns. Hence
$|S_0|<dh$.
\end{proof}

\begin{lemma}[Atom bound after peeling]
\label{lem: atom bound after peeling}
Let $C:\F_p^n\to V$ be surjective, and let $(S_0,H)$ be as in
\Cref{lem: peeling noncode map}. Put
$$
r:=\dim_{\F_p}V,
\qquad
d:=\operatorname{codim}_V H.
$$
If $X\in\F_p^n$ has independent $\epsilon$-balanced coordinates, then,
for every $y\in V$,
\begin{equation}
\label{eq: peeled atom bound}
\mathbf{P}(CX=y)
\leq
(1-\epsilon)^d
\left(
p^{-(r-d)}
+
\exp\left(-\frac{\epsilon h}{p^2}\right)
\right).
\end{equation}
\end{lemma}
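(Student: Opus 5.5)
We condition on the coordinates indexed by $S_0$ and split $X$ into the blocks $X_{S_0}$ and $X_{S_0^c}$. Write
$$
CX=C|_{\F_p^{S_0}}X_{S_0}+C|_{\F_p^{S_0^c}}X_{S_0^c},
$$
where the second summand lies in $H$. The event $CX=y$ forces
$$
C|_{\F_p^{S_0}}X_{S_0}\equiv y \pmod H
$$
in the $d$-dimensional quotient $V/H$. Given $X_{S_0}$, the event further requires $C|_{S_0^c}X_{S_0^c}=y-C|_{S_0}X_{S_0}$, a specified point of $H$. Independence of the two blocks therefore gives
$$
\mathbf P(CX=y)\le \mathbf P\bigl(\overline{C}_{S_0}X_{S_0}=\bar y\bigr)\cdot \sup_{z\in H}\mathbf P\bigl(C|_{S_0^c}X_{S_0^c}=z\bigr),
$$
where $\overline{C}_{S_0}:\F_p^{S_0}\to V/H$ is the composition with the quotient map.

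The second factor is handled by \Cref{lem: Fourier equidistribution for a code}. By \Cref{lem: peeling noncode map}, the restriction $C|_{S_0^c}$ is a code of distance at least $h$ onto $H$, and $\dim H=r-d$. Hence
$$
\sup_{z\in H}\mathbf P\bigl(C|_{S_0^c}X_{S_0^c}=z\bigr)\le p^{-(r-d)}+\exp(-\epsilon h/p^2).
$$

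For the first factor, we need $\mathbf P(\overline{C}_{S_0}X_{S_0}=\bar y)\le(1-\epsilon)^d$. Since $C$ is surjective and the columns outside $S_0$ lie in $H$, the map $\overline{C}_{S_0}$ is surjective onto $V/H\cong\F_p^d$. We may therefore choose $d$ indices $j_1,\dots,j_d\in S_0$ whose columns form a basis of $V/H$. Condition on all coordinates of $X_{S_0}$ other than $X_{j_1},\dots,X_{j_d}$. In the basis $\overline{C}_{j_1},\dots,\overline{C}_{j_d}$, the event $\overline{C}_{S_0}X_{S_0}=\bar y$ then pins each $X_{j_k}$ to a single residue mod $p$. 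By independence and $\epsilon$-balancedness, each such pinning has probability at most $1-\epsilon$, which gives $(1-\epsilon)^d$ for the first factor. Multiplying the two bounds yields \eqref{eq: peeled atom bound}.

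The only point requiring care is making the factorization rigorous, since the target point in $H$ depends on $X_{S_0}$. Conditioning on $X_{S_0}$ and bounding the conditional probability by the supremum over $z\in H$ handles this, so no step is a serious obstacle.
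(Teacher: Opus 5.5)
Your proposal is correct and follows the paper's argument essentially step for step. You bound the quotient event $\overline{C}_{S_0}X_{S_0}=\bar y$ in $V/H$ by $(1-\epsilon)^d$ using $d$ pivot coordinates in $S_0$, then condition on $X_{S_0}$, apply \Cref{lem: Fourier equidistribution for a code} to the independent block $X_{S_0^c}$ on the $(r-d)$-dimensional space $H$, and multiply; taking the supremum over $z\in H$ just makes the paper's conditioning step explicit.
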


\begin{proof}
Let $\pi:V\to V/H$ be the quotient map. Since the columns outside
$S_0$ span $H$ and $C$ is surjective, the map
$$
\pi C|_{\F_p^{S_0}}:\F_p^{S_0}\longrightarrow V/H
$$
is surjective. Choose $d$ coordinates in $S_0$ whose projected columns
form a basis of $V/H$. After conditioning on all other coordinates in
$S_0$, the equation
$$
\pi C X_{S_0}=\pi y
$$
uniquely determines these $d$ pivot variables. Since every atom of each
coordinate is at most $1-\epsilon$, this quotient equation has
probability at most $(1-\epsilon)^d$.

Condition now on a value of $X_{S_0}$ satisfying the quotient equation.
Then
$$
y-CX_{S_0}\in H.
$$
The remaining coordinates are independent of $X_{S_0}$, and
$C|_{\F_p^{S_0^c}}$ is a code of distance at least $h$ onto $H$.
Applying \Cref{lem: Fourier equidistribution for a code} gives
$$
\mathbf{P}\left(
CX_{S_0^c}=y-CX_{S_0}
\right)
\leq
p^{-(r-d)}
+
\exp\left(-\frac{\epsilon h}{p^2}\right).
$$
Multiplying the two bounds proves the claim.
\end{proof}

\subsection{Surjections as intertwining maps}

The module structure of the target enters the estimates only through the
linear operator representing multiplication by $t$.

\begin{lemma}[Intertwining representation]
\label{lem: surjection intertwining representation}
Let $G$ be a finite $\F_p[t]$-module, let $V$ be its underlying
$\F_p$-vector space, and let
$$
T:V\longrightarrow V
$$
denote multiplication by $t$. Then, for every
$B\in\Mat_n(\F_p)$,
\begin{equation}
\label{eq: intertwining representation}
\#\Sur_{\F_p[t]}
\left(
\Cok_{\F_p[t]}(tI_n-B),G
\right)
=
\#\left\{
C:\F_p^n\twoheadrightarrow V:
CB=TC
\right\}.
\end{equation}
\end{lemma}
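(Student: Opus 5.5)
The plan is to identify $\F_p[t]$-module homomorphisms out of the cokernel with $\F_p[t]$-linear maps out of the free module that kill the image of $tI_n-B$, and then translate the $\F_p[t]$-linearity into an intertwining condition on the underlying $\F_p$-linear map. Write $N:=\F_p[t]^n$ and $M:=\Cok_{\F_p[t]}(tI_n-B)=N/(tI_n-B)N$.

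\emph{Step 1.} By the universal property of the quotient, $\Hom_{\F_p[t]}(M,G)$ is in bijection with
$$
\left\{\psi\in\Hom_{\F_p[t]}(N,G):\psi\circ(tI_n-B)=0\right\},
$$
and surjections correspond to surjections. An $\F_p[t]$-linear $\psi:N\to G$ is determined by the images of the standard basis vectors $e_1,\ldots,e_n$. Conversely, any choice of images extends uniquely, since $N$ is free. Encode these images as an $\F_p$-linear map $C:\F_p^n\to V$ with $C(e_j)=\psi(e_j)$. Then for a polynomial vector $f=\sum_k t^k v_k$ with $v_k\in\F_p^n$, we have $\psi(f)=\sum_k T^kCv_k$.

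\emph{Step 2.} Evaluate the relation on generators. Since the $e_j$ generate $N$ over $\F_p[t]$ and $\psi\circ(tI_n-B)$ is $\F_p[t]$-linear, it suffices to check it on each $e_j$. We compute
$$
\psi\bigl((tI_n-B)e_j\bigr)=\psi(te_j)-\psi(Be_j)=TCe_j-CBe_j,
$$
using that $Be_j\in\F_p^n$ is a constant vector. Hence the relation holds if and only if $CB=TC$.

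\emph{Step 3.} Check that surjectivity matches. The image of $\psi$ is the $\F_p[t]$-submodule generated by $C(\F_p^n)$, so $\psi$ is onto exactly when this submodule is all of $V$. If $C$ is surjective, this is clear. Conversely, if $CB=TC$, then $C(\F_p^n)$ is $T$-stable, because $TCv=CBv$. So the $\F_p[t]$-span of $C(\F_p^n)$ equals $C(\F_p^n)$ itself, and $\psi$ is onto iff $C$ is onto as an $\F_p$-linear map.

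Steps 1–3 give a bijection between the two sets in \eqref{eq: intertwining representation}. The only subtle point is Step 3: one must note that $T$-invariance of the image of $C$ makes $\F_p[t]$-surjectivity and $\F_p$-surjectivity agree. Everything else is the universal property of free modules and quotients.
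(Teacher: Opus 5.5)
Your proposal is correct and follows the same route as the paper. The paper identifies module maps out of the cokernel with $\F_p$-linear maps $C$ satisfying $CB=TC$, then uses $T(\operatorname{im}C)=CB(\F_p^n)\subseteq\operatorname{im}C$ to see that $\F_p[t]$-surjectivity coincides with $\F_p$-surjectivity. You simply spell out the universal-property and generator computations that the paper compresses into a single sentence.
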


\begin{proof}
An $\F_p$-linear map $C:\F_p^n\to V$ defines an
$\F_p[t]$-linear map from the module presented by $tI_n-B$ precisely
when
$$
CB=TC.
$$
If this relation holds, then
$$
T(\operatorname{im}C)
=
TC(\F_p^n)
=
CB(\F_p^n)
\subseteq
\operatorname{im}C,
$$
so $\operatorname{im}C$ is automatically an $\F_p[t]$-submodule of
$G$. Thus the induced module map is surjective if and only if $C$ is
surjective as an $\F_p$-linear map.
\end{proof}

Write $X_j\in\F_p^n$ for the $j$-th column of $B_n$ and
$C_j:=C(e_j)$. Since the columns $X_1,\ldots,X_n$ are independent,
\Cref{lem: surjection intertwining representation} yields
\begin{equation}
\label{eq: expected surjection intertwining sum}
\E\left[
\#\Sur_{\F_p[t]}(M_n,G)
\right]
=
\sum_{C:\F_p^n\twoheadrightarrow V}
\prod_{j=1}^n
\mathbf{P}(CX_j=TC_j).
\end{equation}

\subsection{Proof of the growing-target estimate}

\begin{proof}[Proof of \Cref{thm: growing target surjection moments}]
The case $G=0$ is immediate, so assume
$$
r:=\dim_{\F_p}G\geq 1.
$$
Set
$$
a:=-\log(1-\epsilon)>0.
$$
Choose $\delta\in(0,1/4)$ sufficiently small that
\begin{equation}
\label{eq: choice of delta}
H(\delta)+\delta\log p
<
\min\left\{
\frac{a}{4},
\frac{\log p}{4}
\right\},
\end{equation}
where
$$
H(x):=-x\log x-(1-x)\log(1-x)
$$
is the binary entropy function. For $n$ sufficiently large, set $h:=\left\lfloor
\frac{\delta n}{r}\right\rfloor$. The finitely many remaining values of $n$ will be absorbed into the constants. Define
$$
\eta:=p^r\exp\left(-\frac{\epsilon h}{p^2}\right).
$$
Since $r\leq L\log(n+2)$ and
$$
h\geq\frac{\delta n}{r}-1,
$$
we have
$$
\log\eta\leq r\log p-\frac{\epsilon\delta}{p^2}\frac{n}{r}+\frac{\epsilon}{p^2}.
$$
Thus there exists $c_1=c_1(p,\epsilon,L)>0$ such that
\begin{equation}
\label{eq: eta bound}
\eta\leq\exp\left(-c_1\frac{n}{\log(n+2)}
\right)
\end{equation}
for all sufficiently large $n$. After decreasing $c_1$ if necessary,
the same type of estimate holds with $n\eta$ in place of $\eta$.

We split the sum in
\eqref{eq: expected surjection intertwining sum} according to whether
$C$ is a code of distance at least $h$.

Suppose first that $C:\F_p^n\twoheadrightarrow V$ is such a code. By \Cref{lem: Fourier equidistribution for a code}, for every $j$,
$$
p^{-r}-\exp\left(-\frac{\epsilon h}{p^2}\right)\leq\mathbf{P}(CX_j=TC_j)
\leq p^{-r}+\exp\left(-\frac{\epsilon h}{p^2}\right).
$$
Equivalently,
\begin{equation}
\label{eq: code column probability}
p^{-r}(1-\eta)\leq\mathbf{P}(CX_j=TC_j)
\leq p^{-r}(1+\eta).
\end{equation}

We also need that almost every linear map $\F_p^n\to V$ is both
surjective and a code of distance at least $h$. For a fixed nonzero
$\varphi\in V^*$, the number of maps $C$ satisfying
$$
\#\{i:\varphi(C_i)\neq 0\}<h
$$
is at most
$$
p^{(r-1)n}
\sum_{s<h}\binom{n}{s}p^s.
$$
Taking a union bound over the $p^r-1$ nonzero functionals and using
$h/n\leq\delta/r\leq\delta$, the second inequality in
\eqref{eq: choice of delta} gives
\begin{equation}
\label{eq: noncode proportion}
\frac{
\#\{C:\F_p^n\to V:C\text{ is not a code of distance }h\}
}{
p^{rn}
}
\leq
e^{-c_2n}
\end{equation}
for some $c_2>0$. Similarly, the proportion of non-surjective maps is
at most
$$
(p^r-1)p^{-n}
=
e^{-\Omega(n)},
$$
since $r=O(\log n)$. Hence the number of surjective code maps equals
$$
p^{rn}\left(1-e^{-\Omega(n)}\right).
$$
Combining this with \eqref{eq: code column probability}, their total
contribution to \eqref{eq: expected surjection intertwining sum} lies
between
$$
\left(1-e^{-\Omega(n)}\right)(1-\eta)^n
$$
and $(1+\eta)^n$. By \eqref{eq: eta bound}, the code contribution is therefore
\begin{equation}
\label{eq: code contribution}
1+O\left(\exp\left(-c_3\frac{n}{\log(n+2)}
\right)\right)
\end{equation}
for some $c_3>0$.

It remains to control the non-code maps. Let
$C:\F_p^n\twoheadrightarrow V$ be a surjective map which is not a code
of distance $h$, and choose one peeling pair $(S_0,H)$ supplied by
\Cref{lem: peeling noncode map}. Put
$$
d:=\operatorname{codim}_V H\geq 1,
\qquad
s:=|S_0|<dh.
$$
For fixed $d,s,S_0,H$, the number of maps compatible with the coarse
condition
$$
C_i\in H
\qquad
\text{for every }i\notin S_0
$$
is at most
$$
p^{(r-d)(n-s)}p^{rs}.
$$
The number of codimension-$d$ subspaces $H\leq V$ is at most
\begin{equation}
\label{eq: gaussian binomial bound}
C_p p^{d(r-d)},
\qquad C_p:=\prod_{j\geq 1}(1-p^{-j})^{-1}.
\end{equation}
Thus the number of maps having a chosen peeling pair with parameters
$(d,s)$ is at most
\begin{equation}
\label{eq: peeling map count}
C_p\binom{n}{s}p^{d(r-d)}p^{(r-d)(n-s)+rs}.
\end{equation}
This deliberately overcounts the admissible maps, which is harmless.

For every genuinely admissible $C$,
\Cref{lem: atom bound after peeling} gives, for each column $X_j$,
$$
\begin{aligned}
\mathbf{P}(CX_j=TC_j)
&\leq
(1-\epsilon)^d
\left(
p^{-(r-d)}
+
\exp\left(-\frac{\epsilon h}{p^2}\right)
\right)\\
&\leq
(1-\epsilon)^d
p^{-(r-d)}(1+\eta).
\end{aligned}
$$
Combining this with \eqref{eq: peeling map count}, the total contribution
for fixed $d$ and $s$ is at most
\begin{equation}
\label{eq: fixed ds noncode contribution}
C_p
\binom{n}{s}
p^{d(r-d)+ds}
(1-\epsilon)^{dn}
(1+\eta)^n.
\end{equation}
The factor $p^{(r-d)n}$ coming from the number of maps has cancelled
exactly with the factor $p^{-(r-d)n}$ coming from the probability
estimate.

We now sum over $s<dh$. Since
$$
\frac{dh}{n}
\leq
\frac{\delta d}{r}
\leq
\delta,
$$
we have
$$
\sum_{s<dh}
\binom{n}{s}p^{ds}
\leq
(n+1)
\exp\left(
nH\left(\frac{\delta d}{r}\right)
+
\frac{\delta d^2}{r}n\log p
\right).
$$
Because $\delta d/r\leq\delta<1/2$ and $d\geq 1$,
$$
H\left(\frac{\delta d}{r}\right)
\leq
H(\delta)
\leq
dH(\delta),
$$
while
$$
\frac{\delta d^2}{r}\log p
\leq
\delta d\log p.
$$
Hence \eqref{eq: choice of delta} implies
\begin{equation}
\label{eq: entropy peeling bound}
\sum_{s<dh}
\binom{n}{s}p^{ds}
\leq
\exp\left(
\frac{ad}{4}n+O(\log n)
\right).
\end{equation}
Furthermore,
$$
p^{d(r-d)}
=
e^{O(dr)}
=
e^{O(d\log n)}
$$
and, by \eqref{eq: eta bound},
$$
(1+\eta)^n
\leq
e^{n\eta}
=
e^{o(n)}.
$$
Since $(1-\epsilon)^{dn}=e^{-adn}$, summing \eqref{eq: fixed ds noncode contribution} over $s<dh$ and using
\eqref{eq: entropy peeling bound} shows that the contribution of maps
with a fixed codimension $d$ is at most
$e^{-c_4dn}$ for some $c_4>0$ and all sufficiently large $n$. Summing over
$1\leq d\leq r$ gives
\begin{equation}
\label{eq: noncode contribution}
\sum_{\substack{
C:\F_p^n\twoheadrightarrow V\\
C\text{ not a code of distance }h
}}
\prod_{j=1}^n
\mathbf{P}(CX_j=TC_j)
\leq
e^{-c_5n}
\end{equation}
for some $c_5>0$.

Combining \eqref{eq: code contribution} and
\eqref{eq: noncode contribution} proves
\eqref{eq: growing target surjection moment}. The uniform boundedness
in \eqref{eq: uniformly bounded growing surjection moments} follows by
absorbing the finitely many smaller values of $n$ into the constant.
\end{proof}

\begin{remark}
The proof above does not use the isomorphism type of $G$ beyond the
dimension of its underlying $\F_p$-vector space. In particular, the
estimate is uniform over all $\F_p[t]$-module structures on targets of
dimension $O(\log n)$. Moreover, the entries of $B_n$ need not be
identically distributed: independence and the common
$\epsilon$-balanced condition are sufficient.
\end{remark}

\subsection{Tail bounds for primary partitions}

We now apply \Cref{thm: growing target surjection moments} to the primary
partitions of $B_n$. Fix $i\geq 1$, and write
$$
\lambda^{(i)}(B_n)
=
\left(
\lambda^{(i)}_1,\lambda^{(i)}_2,\ldots
\right).
$$
Set
$$
a_i:=\lambda^{(i)}_1,
\qquad
\ell_i:=\#\{j:\lambda^{(i)}_j>0\}.
$$
Then
\begin{equation}
\label{eq: partition size largest times length}
\left|\lambda^{(i)}(B_n)\right|
\leq
a_i\ell_i.
\end{equation}

We first control the largest part. Let
$$
G_{i,k}:=\F_p[t]/(F_i^k).
$$
Then $\dim_{\F_p}G_{i,k}=d_i k$, and
\begin{equation}
\label{eq: largest part surjection criterion}
a_i\geq k
\quad\Longleftrightarrow\quad
\Sur_{\F_p[t]}(M_n,G_{i,k})\neq\varnothing.
\end{equation}
Once one surjection exists, postcomposition by automorphisms of
$G_{i,k}$ produces at least
$$
\#\Aut_{\F_p[t]}(G_{i,k})=(p^{d_i}-1)p^{d_i(k-1)}
$$
distinct surjections. Therefore, whenever
$d_i k\le L\log(n+2)$,
\Cref{thm: growing target surjection moments} and Markov's inequality
give
\begin{equation}
\label{eq: largest primary part tail}
\mathbf{P}(a_i\geq k)\leq C_{i,L} p^{-d_i k}.
\end{equation}

We next control the number of parts. Let
$$
K_i:=\F_p[t]/(F_i)\cong\F_{p^{d_i}}.
$$
Only the $F_i$-primary component of $M_n$ maps nontrivially to $K_i$,
and
$$
\Hom_{\F_p[t]}(M_n,K_i)
\cong
K_i^{\ell_i}.
$$
Since $K_i$ is simple, every nonzero homomorphism to $K_i$ is
surjective. Hence
$$
\#\Sur_{\F_p[t]}(M_n,K_i)
=
p^{d_i\ell_i}-1.
$$
Applying the fixed-target case of
\Cref{thm: growing target surjection moments} and Markov's inequality
gives, uniformly in $n$,
\begin{equation}
\label{eq: number primary blocks tail}
\mathbf{P}(\ell_i\geq u)
\leq
C_i p^{-d_i u}.
\end{equation}

Combining these two estimates yields a tail bound for the total
$F_i$-primary degree.

\begin{lemma}
\label{lem: one primary degree tail}
Fix $i\geq 1$ and $M>0$. There exist constants
$C_{i,M},c_{i,M}>0$ such that, for every $D\geq 2$,
\begin{equation}
\label{eq: one primary degree tail}
\sup_{n\geq 1}
\mathbf{P}\left(
d_i\left|\lambda^{(i)}(B_n)\right|>D
\right)
\leq
C_{i,M}D^{-M}
+
C_{i,M}
\exp\left(
-c_{i,M}\frac{D}{\log(D+2)}
\right).
\end{equation}
\end{lemma}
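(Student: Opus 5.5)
We combine \eqref{eq: partition size largest times length} with the two tail bounds \eqref{eq: largest primary part tail} and \eqref{eq: number primary blocks tail}. If $d_i|\lambda^{(i)}(B_n)|>D$, then $d_i a_i\ell_i>D$. Hence for any threshold $u\geq 1$, either $\ell_i\geq u$ or $a_i> D/(d_i u)$. We choose $u:=\lceil \kappa\log(D+2)\rceil$ with a constant $\kappa=\kappa(i,M)$ large, so that by \eqref{eq: number primary blocks tail}
$$
\mathbf{P}(\ell_i\geq u)\leq C_i p^{-d_i\kappa\log(D+2)}\leq C_i (D+2)^{-M}
$$
once $\kappa d_i\log p\geq M$. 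This produces the polynomial term $C_{i,M}D^{-M}$.

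It remains to bound $\mathbf{P}(a_i\geq k)$ with $k:=\lceil D/(d_i u)\rceil\asymp D/\log(D+2)$. Here lies the main obstacle. The estimate \eqref{eq: largest primary part tail} is only available when $d_ik\leq L\log(n+2)$, so the bound is not uniform in $n$ for large $k$. We split according to the size of $n$ relative to $D$.

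\emph{Case 1: $d_ik\leq L\log(n+2)$.} Then \eqref{eq: largest primary part tail} gives
$$
\mathbf{P}(a_i\geq k)\leq C_{i,L}p^{-d_ik}\leq C\exp\left(-c\frac{D}{\log(D+2)}\right).
$$

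\emph{Case 2: $d_ik> L\log(n+2)$.} Set $k'$ to be the largest integer with $d_ik'\leq L\log(n+2)$, and use monotonicity, $\mathbf{P}(a_i\geq k)\leq\mathbf{P}(a_i\geq k')\leq C_{i,L}p^{-d_ik'}$. For $n$ large, this is $\lesssim (n+2)^{-L\log p}p^{d_i}$. Since $a_i\leq n$ trivially, the event $d_i|\lambda^{(i)}|>D$ is empty unless $d_i n\geq D$, so we may assume $n\geq D/d_i$. The bound is then $\lesssim D^{-L\log p}$, and choosing $L$ with $L\log p\geq M$ makes it $O(D^{-M})$. Small $n$ (below $n_0$) and small $D$ are absorbed into the constants, using the trivial bound $d_i|\lambda^{(i)}|\leq n$ for $n<n_0$: the probability vanishes once $D\geq d_in_0$.

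\emph{Conclusion.} Summing the two contributions gives \eqref{eq: one primary degree tail}, uniformly in $n$. The only delicate point is the interplay of the logarithmic range of validity of \Cref{thm: growing target surjection moments} with $n$; the observation $n\gtrsim D$ on the relevant event is what makes the polynomial term $D^{-M}$ appear. I expect this bookkeeping to be the main thing to check carefully.
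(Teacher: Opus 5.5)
Your proof is correct, but it assigns the two thresholds the opposite way from the paper.

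\textbf{The paper's route.} The paper puts the logarithmic cutoff on the largest part, $k_i(D)=\lceil \frac{M}{d_i}\log_p(D+2)\rceil$, and the large cutoff $D/(d_ik_i(D))\asymp D/\log(D+2)$ on the number of parts. The event is empty unless $n>D$, because $d_i|\lambda^{(i)}(B_n)|\le n$. So the target $G_{i,k_i(D)}$ has dimension $O(\log D)=O(\log n)$, and \eqref{eq: largest primary part tail} applies directly; this gives the $D^{-M}$ term. The stretched-exponential term comes from \eqref{eq: number primary blocks tail}. That is a fixed-target bound, so it holds for every threshold uniformly in $n$. No case analysis is needed.

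\textbf{Your route.} The largest part carries the threshold $k\asymp D/\log(D+2)$. This threshold can leave the range $d_ik\le L\log(n+2)$ of the growing-target estimate. That forces the split into Cases 1 and 2 and the monotone truncation to $k'$. The truncation works once you take $L\log p\ge M$ and use $n\gtrsim D$, and it yields the same final bound. The cost is extra bookkeeping, which the paper's choice of thresholds avoids. Your version does make explicit that the stretched-exponential term is only needed when $\log n\gtrsim D/\log D$; otherwise the bound is polynomial.

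Two small points:
\begin{enumerate}
\item The reason the event is empty for small $n$ is not $a_i\le n$, which by itself does not bound $|\lambda^{(i)}(B_n)|$. The correct reason is that $d_i|\lambda^{(i)}(B_n)|$ is the dimension of the $F_i$-primary component, hence at most $n$. This gives the sharper condition $n>D$.
\item Your separate treatment of $n<n_0$ is unnecessary. The bound \eqref{eq: uniformly bounded growing surjection moments} already makes \eqref{eq: largest primary part tail} valid for all $n\geq 1$ in its range.
\end{enumerate}
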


\begin{proof}
Set
$$
k_i(D):=
\left\lceil
\frac{M}{d_i}\log_p(D+2)
\right\rceil.
$$
If $n\leq D$, then
$$
d_i\left|\lambda^{(i)}(B_n)\right|
\leq n\leq D,
$$
so the event in \eqref{eq: one primary degree tail} is impossible.

Suppose therefore that $n>D$. Then
$$
d_i k_i(D)=O_{i,M}(\log n),
$$
so \eqref{eq: largest primary part tail} applies. By
\eqref{eq: partition size largest times length}, the event
$$
d_i\left|\lambda^{(i)}(B_n)\right|>D
$$
implies either $a_i\geq k_i(D)$ or $\ell_i>\frac{D}{d_i k_i(D)}$. Consequently, \eqref{eq: largest primary part tail} and
\eqref{eq: number primary blocks tail} give
$$
\mathbf{P}\left(
d_i\left|\lambda^{(i)}(B_n)\right|>D
\right)
\leq
C_{i,M}D^{-M}
+
C_{i,M}
\exp\left(
-c_{i,M}\frac{D}{\log(D+2)}
\right),
$$
uniformly in $n$.
\end{proof}

\subsection{Moment tightness for distinguished degrees}

We first deduce the quantitative tail estimate from the bounds for the
individual primary components.

\begin{proof}[Proof of \Cref{prop: quantitative degree tail}]
By \eqref{eq: degree from primary partitions},
$$
\deg P_{A_n}^{S}=\sum_{i\in S}
d_i\left|\lambda^{(i)}(B_n)\right|.
$$
Hence the event $\deg P_{A_n}^{S}>D$ implies that, for some $i\in S$,
$$
d_i\left|\lambda^{(i)}(B_n)\right|>\frac{D}{|S|}.
$$
Therefore, by a union bound,
$$
\mathbf{P}\left(\deg P_{A_n}^{S}>D
\right)\leq\sum_{i\in S}\mathbf{P}\left(
d_i\left|\lambda^{(i)}(B_n)\right|
>\frac{D}{|S|}\right).
$$
Applying \Cref{lem: one primary degree tail} to each $i\in S$ and
absorbing the fixed factor $|S|$ into the constants gives
\eqref{eq: quantitative degree tail}.
\end{proof}

We can now deduce the desired moment tightness.

\begin{proof}[Proof of \Cref{prop: degree moment tightness}]
Fix an integer $m\geq 1$, and set
$$
X_n:=\deg P_{A_n}^{S}.
$$
For every $D>0$,
$$
X_n^m\mathbf{1}_{\{X_n>D\}}=D^m\mathbf{1}_{\{X_n>D\}}+\mathbf{1}_{\{X_n>D\}}
\int_D^{X_n}mt^{m-1}\,dt.
$$
Taking expectations and applying Tonelli's theorem gives
$$
\E\left[X_n^m\mathbf{1}_{\{X_n>D\}}
\right]=D^m\mathbf{P}(X_n>D)+\int_D^\infty
mt^{m-1}\mathbf{P}(X_n>t)\,dt.
$$
Apply \Cref{prop: quantitative degree tail} with some $M>m+2$. We obtain
$$
\begin{aligned}
\E\left[X_n^m\mathbf{1}_{\{X_n>D\}}
\right]
\leq {}&
C D^{m-M}+Cm\int_D^\infty t^{m-1-M}\,dt \\
&+C D^m\exp\left(-c\frac{D}{\log(D+2)}
\right) \\
&+Cm\int_D^\infty
t^{m-1}\exp\left(-c\frac{t}{\log(t+2)}
\right)\,dt,
\end{aligned}
$$
where the constants are independent of $n$. Since $M>m+2$, the first
two terms tend to zero as $D\to\infty$. The final two terms also tend to
zero, since
$$
\exp\left(
-c\frac{t}{\log(t+2)}
\right)
$$
decays faster than any fixed negative power of $t$. Consequently,
$$
\lim_{D\to\infty}
\sup_{n\geq 1}
\E\left[
\left(\deg P_{A_n}^{S}\right)^m
\mathbf{1}_{\{\deg P_{A_n}^{S}>D\}}
\right]
=0.
$$
This proves \Cref{prop: degree moment tightness}.
\end{proof}

%% file: The_random_polynomial_case.tex
\section{The random polynomial case}
\label{sec: random polynomial}

In this section, we prove
\Cref{thm: root correlation universality} by comparing the random
polynomial model under consideration with the corresponding additive Haar
coefficient model. For the rest of this section, for every
$n\geq 1$, let $P_n$ be as in \eqref{item: P_n}, and let
$$
P_n^{\Haar}(x):=\eta_nx^n+\cdots+\eta_1x+\eta_0
$$
be the random Haar polynomial defined in
\eqref{eq: Haar random polynomial}. We first show that, for every fixed test
polynomial whose constant coefficient is a unit, the resultant valuations
associated with $P_n$ and $P_n^{\Haar}$ converge weakly to the same limiting
random variable. We then establish the moment tightness of the degrees of the
relevant distinguished factors. Combining these two inputs with the resultant
distribution method developed in
\Cref{sec: resultant distribution method}, we prove the universality of the
expected root statistics. 

\begin{thm}[Universality of resultant distributions]
\label{thm: random polynomial resultant convergence}
Let $Z\in\Z_p[x]$ be a fixed monic polynomial of degree $d$, whose constant term lies in $\Z_p^\times$. Then there exists a random variable $\mathcal{R}_Z\in\Z_{\geq 0}\cup\{\infty\}$ such that
$$
\val(\Res(P_n,Z))
\overset{d}{\longrightarrow}
\mathcal{R}_Z,\qquad n\rightarrow\infty.
$$
Furthermore, the sequence $\val(\Res(P_n^{\Haar},Z))$ also weakly converges to the same limit, i.e.,
$$
\val(\Res(P_n^{\Haar},Z))
\overset{d}{\longrightarrow}
\mathcal{R}_Z,\qquad n\rightarrow\infty.
$$
\end{thm}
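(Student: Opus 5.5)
The plan is to rewrite the resultant as a norm from the finite $\Z_p$-algebra $\Z_p[x]/(Z)$, and then prove equidistribution of $P_n$ in that algebra modulo $p^k$ by Fourier analysis.

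Set $R:=\Z_p[x]/(Z)$. Since $Z$ is monic, $R$ is a free $\Z_p$-module of rank $d$. By the product formula for the resultant (taken with the actual degree of $P_n$, and including the degenerate case $P_n=0$, where both sides vanish), we have
$$
\Res(P_n,Z)=\pm\prod_{Z(\beta)=0}P_n(\beta)=\pm\det\bigl(m_{P_n}:R\to R\bigr)=\pm N_{R/\Z_p}(\overline{P_n}),
$$
where $\overline{P_n}$ is the image of $P_n$ in $R$. The sign does not affect valuations. For each $k\geq 1$, the truncated valuation $\val(\Res(P_n,Z))\wedge k$ depends only on $\det(m_{P_n})\bmod p^k$. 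This in turn depends only on $\overline{P_n}\bmod p^kR$, an element of the finite abelian group $G_k:=R/p^kR$. It therefore suffices to prove the following claim.

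\textbf{Claim.} For every $k$, the law of $\overline{P_n}\bmod p^k$ converges to the uniform measure on $G_k$ as $n\to\infty$, and this holds both for $P_n$ and for $P_n^{\Haar}$.

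Given the claim, we set $\mathcal R_Z:=\val(N_{R/\Z_p}(u))$ with $u$ Haar-distributed on $R$. The distributions of $\val\wedge k$ then converge for every $k$, which is weak convergence in $\Z_{\geq0}\cup\{\infty\}$. For the Haar model the claim is essentially trivial: once $n\geq d-1$, the sum $\sum_{j<d}\eta_jx^j$ is already Haar on $R$ and is independent of the remaining terms.

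To prove the claim in general, write
$$
\overline{P_n}\bmod p^k=\sum_{j=0}^n \xi_j\,(x^j\bmod p^k).
$$
This is a sum of independent random elements of $G_k$. The constant term of $Z$ is a unit, so $x$ is a unit in $R$; hence $x$ has finite order $N_k$ in $(R/p^kR)^\times$, and the sequence $x^j\bmod p^k$ is periodic with period $N_k$. By Fourier inversion on $G_k$, it suffices to show that for every nontrivial character $\psi$ of $G_k$,
$$
\E\,\psi(\overline{P_n})=\prod_{j=0}^n\E\,\psi(\xi_jx^j)\longrightarrow0 .
$$

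We argue in two steps.

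First, $\psi$ cannot be trivial on all of $\Z_p x^j$ for $j=0,\dots,N_k-1$. These elements span $R/p^kR$ over $\Z_p$, since the powers $1,x,\ldots,x^{d-1}$ already do. Hence every block of $N_k$ consecutive indices contains some $j$ for which $a\mapsto\psi(ax^j)$ is a nontrivial character of $\Z/p^k$. Such a character has the form $a\mapsto e(ua/p^e)$ with $1\leq e\leq k$ and $p\nmid u$.

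Second, we need a uniform gap: $\bigl|\E\,e(u\xi/p^e)\bigr|\leq 1-c(p,\epsilon,k)<1$ for every $\epsilon$-balanced $\xi$. I would obtain this as in \Cref{lem: uniform Fourier gap} (Wood's Lemma 4.2 applied with $b=p^e$). The point is that modulus $1$ would force $u\xi\bmod p^e$ to be almost surely constant, hence $\xi\bmod p$ to be constant, contradicting balancedness. The bound must be made quantitative; for instance, split according to $\xi\bmod p$ and use that two residues mod $p$ each carry mass at least roughly $\epsilon/p$, together with a lower bound on the distance between the corresponding values of $e(u\,\cdot/p^e)$.

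Combining the two steps, the product above is at most $(1-c)^{\lfloor n/N_k\rfloor}\to0$, which proves the claim.

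I expect the main obstacle to be the uniform Fourier gap for characters of order $p^e$ with $e>1$. Balancedness only controls $\xi$ modulo $p$, so one must check that the first-level non-concentration already prevents $\E\,e(u\xi/p^e)$ from approaching the unit circle. I would try to prove this directly: if $|\E\,e(u\xi/p^e)|$ were close to $1$, then most of the mass of $u\xi\bmod p^e$ would lie in a short arc. Points of an arc of length less than $2\pi/p$ correspond to residues of $u\xi\bmod p^e$ lying in a single interval of length less than $p^{e-1}$, which meets at most two classes mod $p$. I would then need to rule out that configuration as well, using the mass bound $1-\epsilon$ on each residue class together with a case analysis. Once this is settled, everything else is bookkeeping.
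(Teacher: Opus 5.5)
Your proposal is correct and follows essentially the paper's route. Like the paper, you show that $P_n \bmod (Z,p^k)$ equidistributes via independence and a product of characters, observe that every block of consecutive indices carries a nontrivial character, use a uniform Fourier gap, and compare with the Haar model, which is exactly uniform once $n\geq d-1$; the only differences are that the paper uses blocks of length $d$ (since $x^r,\ldots,x^{r+d-1}$ is again a basis) where you use the period $N_k$, and that you phrase the residue-class dependence through the norm rather than through resultant identities.

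The step you flag as the main obstacle is not one. Every atom of $\xi\bmod p^e$ lies inside an atom of $\xi\bmod p$, so Wood's lemma with $b=p^e$ applies at once, as does the direct bound $|\E\psi(\xi)|^2\le 1-\epsilon\bigl(1-\cos(2\pi/p^e)\bigr)$. The paper instead obtains its uniform constant $\rho_k$ by a compactness argument.
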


\begin{proof}
The case $d=0,Z=1$ is trivial. From now on, we assume $d\ge 1$. For every integer $k\geq 1$, consider the finite additive group
$$
\Z_p[x]/(Z,p^k)\cong(\Z/p^k\Z)[x]/(Z),
$$
which has order $p^{kd}$. We first show that the image of $P_n$ in this group converges weakly to the uniform distribution.

Let $\chi$ be a nontrivial additive character of $(\Z/p^k\Z)[x]/(Z)$. We continue to write $x$ for the residue class of $x$ in this quotient. By the independence of the coefficients,
\begin{equation}
\label{eq: Fourier transform random polynomial quotient}
\E\left[
\chi\left(\sum_{j=0}^n \xi_jx^j\right)
\right]
=
\prod_{j=0}^n
\E\left[\chi(\xi_jx^j)\right].
\end{equation}
For each $j\geq 0$, define an additive character of $\Z/p^k\Z$ by
$$
\chi_j(a):=\chi(ax^j).
$$

Since the constant term of $Z$ lies in $\Z_p^\times$, the residue class of
$x$ is a unit in $(\Z/p^k\Z)[x]/(Z)$. Moreover,
$$
1,x,\ldots,x^{d-1}
$$
form a basis of this quotient as a $\Z/p^k\Z$-module. It follows that, for
every $r\geq 0$,
$$
x^r,x^{r+1},\ldots,x^{r+d-1}
$$
also form a basis. Consequently, among every $d$ consecutive characters
$$
\chi_r,\chi_{r+1},\ldots,\chi_{r+d-1},
$$
at least one is nontrivial. Indeed, if all of them were trivial, then $\chi$
would vanish on a basis and hence would be the trivial character.

We next record the Fourier decay following from the $\epsilon$-balanced
assumption. For every fixed $k\geq 1$, there exists a constant
$$
\rho_k=\rho_k(p,\epsilon)<1
$$
such that, for every $\epsilon$-balanced random variable $\xi\in\Z_p$ and
every nontrivial additive character
$\psi:\Z/p^k\Z\to\mathbb{C}^\times$, one has
\begin{equation}
\label{eq: balanced Fourier decay}
\left|\E[\psi(\xi)]\right|\leq \rho_k.
\end{equation}
Indeed, the law of $\xi\bmod p^k$ belongs to the compact set of probability
measures $\mu$ on $\Z/p^k\Z$ satisfying
$$
\max_{a\in\Z/p\Z}
\mu(a+p\Z/p^k\Z)
\leq 1-\epsilon.
$$
For a fixed nontrivial character $\psi$, the modulus of
$$
\sum_{a\in\Z/p^k\Z}\mu(a)\psi(a)
$$
can equal one only if $\psi$ is constant on the support of $\mu$. Since the
support of $\mu$ meets at least two distinct residue classes modulo $p$, this
is impossible for a nontrivial additive character. The compactness of the
set of admissible measures and the finiteness of the collection of
characters give \eqref{eq: balanced Fourier decay}.

Applying \eqref{eq: balanced Fourier decay} to
\eqref{eq: Fourier transform random polynomial quotient}, and using the fact
that every block of $d$ consecutive characters contains a nontrivial one, we
obtain
$$
\left|
\E\left[
\chi\left(\sum_{j=0}^n \xi_jx^j\right)
\right]
\right|
\leq
\rho_k^{\lfloor (n+1)/d\rfloor}.
$$
Hence, for every nontrivial additive character $\chi$,
$$
\E\left[
\chi\left(P_n\bmod (Z,p^k)\right)
\right]
\longrightarrow 0.
$$
By Fourier inversion on the finite abelian group
$(\Z/p^k\Z)[x]/(Z)$, it follows that
\begin{equation}
\label{eq: random polynomial quotient uniform convergence}
P_n\bmod (Z,p^k)
\overset{d}{\longrightarrow}
\mathbf{H}_{Z,k},
\end{equation}
where $\mathbf{H}_{Z,k}$ denotes a uniformly distributed random element of
$(\Z/p^k\Z)[x]/(Z)$.

We now compare this with the Haar coefficient model. Since $1,x,\ldots,x^{d-1}$ form a $\Z/p^k\Z$-basis of $(\Z/p^k\Z)[x]/(Z)$, the random element
$$
\eta_{d-1}x^{d-1}+\cdots+\eta_1x+\eta_0
\pmod{(Z,p^k)}
$$
is uniformly distributed on $(\Z/p^k\Z)[x]/(Z)$. Adding the remaining
independent terms preserves the uniform distribution. Therefore, for every
$n\geq d-1$,
\begin{equation}
\label{eq: Haar polynomial quotient uniform}
P_n^{\Haar}\bmod (Z,p^k)
\overset{d}{=}
\mathbf{H}_{Z,k}.
\end{equation}

We next show that the truncated resultant valuation depends only on the
residue class modulo $(Z,p^k)$. Suppose that
$$
Q_1\equiv Q_2\pmod{(Z,p^k)}.
$$
Then there exist polynomials $B,C\in\Z_p[x]$ such that
$$
Q_1-Q_2=BZ+p^kC.
$$
By the fourth item of \Cref{prop: properties of resultant},
$$
\Res(Z,Q_1)=\Res(Z,Q_2+p^kC).
$$
Since the resultant is a polynomial with integer coefficients in the coefficients of its two arguments, we have
$$
\Res(Z,Q_2+p^kC)\equiv\Res(Z,Q_2)
\pmod{p^k}.
$$
It follows from the first item of \Cref{prop: properties of resultant} that
\begin{align}
\begin{split}
\min\{\val(\Res(Q_1,Z)),k\}&=\min\{\val(\Res(Z,Q_1)),k\}\\
&=\min\{\val(\Res(Z,Q_2)),k\}\\
&=\min\{\val(\Res(Q_2,Z)),k\}.
\end{split}
\end{align}
Thus there is a well-defined function
$$
\Phi_{Z,k}:
(\Z/p^k\Z)[x]/(Z)
\longrightarrow
\{0,1,\ldots,k\}
$$
given by
$$
\Phi_{Z,k}(Q)
:=
\min\{\val\Res(\widetilde Q,Z),k\},
$$
where $\widetilde Q\in\Z_p[x]$ is any lift of $Q$.

Applying $\Phi_{Z,k}$ to
\eqref{eq: random polynomial quotient uniform convergence}, we obtain
$$
\min\{\val(\Res(P_n,Z)),k\}
\overset{d}{\longrightarrow}
\Phi_{Z,k}(\mathbf{H}_{Z,k}).
$$
On the other hand, by \eqref{eq: Haar polynomial quotient uniform},
$$
\min\{\val(\Res(P_n^{\Haar},Z)),k\}
\overset{d}{=}
\Phi_{Z,k}(\mathbf{H}_{Z,k})
$$
for every $n\geq d-1$. Therefore, for every fixed $k\geq 1$, the two sequences of truncated resultant valuations converge to the same distribution.

Now define $\mathcal{R}_Z:=\val(\Res(P_{d-1}^{\Haar},Z))$. For every $n\geq d-1$, the image of $P_n^{\Haar}$ modulo $Z$ has the same
Haar distribution as the image of $P_{d-1}^{\Haar}$ modulo $Z$. Hence
$$
\val(\Res(P_n^{\Haar},Z))
\overset{d}{=}\mathcal{R}_Z.
$$
Moreover, for every fixed $k\geq 1$,
$$
\min\{\val(\Res(P_n,Z)),k\}
\overset{d}{\longrightarrow}
\min\{\mathcal{R}_Z,k\}.
$$
It follows that, for every $r\in\Z_{\geq 0}$, upon choosing $k>r$,
$$
\mathbf{P}\left(\val(\Res(P_n,Z))=r\right)
\longrightarrow
\mathbf{P}(\mathcal{R}_Z=r).
$$
Also, for every $k\geq 1$,
$$
\mathbf{P}\left(\val(\Res(P_n,Z))\geq k\right)
\longrightarrow
\mathbf{P}(\mathcal{R}_Z\geq k).
$$
These relations imply
$$
\val(\Res(P_n,Z))\overset{d}{\longrightarrow}
\mathcal{R}_Z
$$
in $\Z_{\geq 0}\cup\{\infty\}$. Together with
$$
\val(\Res(P_n^{\Haar},Z))
\overset{d}{=}
\mathcal{R}_Z,
\qquad n\geq d-1,
$$
this proves the theorem.
\end{proof}

\begin{rmk}
\label{rem: Haar resultant stabilization}
In fact, the above proof shows that the convergence statement for the Haar coefficient model is stronger than what is stated in
\Cref{thm: random polynomial resultant convergence}. Indeed, for every $n\geq d-1$, the image of $P_n^{\Haar}$ in $\Z_p[x]/(Z)$ is Haar distributed. Consequently,
$$
\val(\Res(P_n^{\Haar},Z))\overset{d}{=}
\val(\Res(P_{d-1}^{\Haar},Z))=\mathcal{R}_Z,
\qquad n\geq d-1.
$$
Thus the resultant-valuation distribution in the Haar coefficient model stabilizes exactly once the degree reaches $d-1$.
\end{rmk}

\begin{prop}[Moment tightness of the lifted-subspace factor]
\label{prop: degree moment tightness random polynomial}
Fix an integer $m\geq 1$, and let $S={i_1,\ldots,i_s}\subset\Z_{>0}\setminus{1}$ be finite. Let $P_n^{S},P_n^{\Haar,S}$ be the $S$-distinguished factor of $P_n,P_n^{\Haar}$, respectively. Then
$$
\lim_{D\to\infty}\sup_{n\geq 1}\E\left[(\deg P_n^S)^m
\mathbf{1}_{{\deg P_n^S>D}}
\right]=\lim_{D\to\infty}
\sup_{n\geq 1}\E\left[
(\deg P_n^{\Haar,S})^m\mathbf{1}_{{\deg P_n^{\Haar,S}>D}}\right]=0.
$$
\end{prop}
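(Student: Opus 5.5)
Since Haar-distributed coefficients are themselves $\epsilon$-balanced (with $\epsilon=1-p^{-1}$), it suffices to treat $P_n$; the argument will use only independence and the $\epsilon$-balanced condition. First I would express $\deg P_n^S$ through the reduction modulo $p$. Write $P_n=p^{v}Q_n$ with $v=\min_j\val(\xi_j)$, so that $\overline{Q}_n\neq0$, and split according to whether $\overline{P}_n\neq 0$ or not.

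\textbf{The case $\overline{P}_n\neq 0$.} Since $1\notin S$, every $F_i$ with $i\in S$ is coprime to $x$, so the relevant roots are units. A Hensel/Newton-polygon argument (\Cref{lem: Hensel} applied to $\overline{Q}_n$, after splitting off the unit-root part) should give
$$
\deg P_n^S=\sum_{i\in S}d_i\,\mathrm{mult}_{F_i}(\overline{Q}_n).
$$
\textbf{The case $\overline{P}_n=0$.} This event has probability at most $(1-\epsilon)^{n+1}$, and on it $\deg P_n^S\le n$. Moreover $\deg P_n^S>D$ forces $n\ge D$. So this case contributes at most $\sup_{n\ge D}n^m(1-\epsilon)^{n+1}$, which tends to $0$ as $D\to\infty$. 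On the complementary event $v=0$, we have $Q_n=P_n$, and the problem reduces to tail bounds for $\mathrm{mult}_{F_i}(\overline{P}_n)$ with $\overline{P}_n\neq0$.

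\textbf{Fourier bound.} Fix $i\in S$ and $k\ge1$. I would bound $\mathbf{P}(F_i^k\mid\overline{P}_n)$ by the Fourier argument from the proof of \Cref{thm: random polynomial resultant convergence}, run in the group $\F_p[x]/(F_i^k)$ of order $p^{d_ik}$. Because $F_i(0)\neq0$, the class of $x$ is a unit, so every block of $d_ik$ consecutive characters $\chi_j(a)=\chi(ax^j)$ contains a nontrivial one. By \Cref{lem: uniform Fourier gap}, each such nontrivial factor has modulus at most $e^{-\epsilon/p^2}$. Fourier inversion then gives
$$
\mathbf{P}(F_i^k\mid\overline{P}_n)\le p^{-d_ik}+\exp\Bigl(-\frac{\epsilon}{p^2}\Bigl\lfloor\frac{n+1}{d_ik}\Bigr\rfloor\Bigr).
$$
Since $\overline{P}_n\neq0$, the multiplicity is at most $n/d_i$, so only $k\le n$ matters.

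\textbf{Choice of truncation (main obstacle).} The error term above is useless when $k$ is comparable to $n$, so I would truncate at $k_n:=\lfloor\sqrt n\rfloor$. For $k\le k_n$ the error is at most $e^{-c\sqrt n}$. For $k>k_n$, I would use $\mathbf{P}(\mathrm{mult}\ge k)\le\mathbf{P}(\mathrm{mult}\ge k_n)\le p^{-d_ik_n}+e^{-c\sqrt n}$. Hence, uniformly in $n$,
$$
\mathbf{P}\bigl(d_i\,\mathrm{mult}_{F_i}(\overline{P}_n)\ge t\bigr)\le p^{-t}+\mathbf{1}_{\{t\le n\}}\,2e^{-c\sqrt n}.
$$
A union bound over $i\in S$, as in the proof of \Cref{prop: quantitative degree tail}, transfers this to $X_n:=\deg P_n^S$, with $t$ replaced by $t/|S|$.

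\textbf{Conclusion.} Finally I would use the identity from the proof of \Cref{prop: degree moment tightness}:
$$
\E[X_n^m\mathbf 1_{\{X_n>D\}}]=D^m\mathbf P(X_n>D)+\int_D^\infty mt^{m-1}\mathbf P(X_n>t)\,dt.
$$
The geometric part gives $O(D^mp^{-D/|S|})$. The error part is nonzero only when $n>D$, and there it is at most $O(n^m e^{-c\sqrt n})\le\sup_{n>D}O(n^me^{-c\sqrt n})$. Both tend to $0$ uniformly in $n$ as $D\to\infty$. The delicate step is this uniformity in $n$ for $k$ close to $n$, and the $\sqrt n$ truncation combined with the trivial bound $X_n\le n$ is what handles it.
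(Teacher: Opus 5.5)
Your proof is correct. Its overall structure matches the paper's: you discard $\{\overline P_n=0\}$ at cost $\sup_{n>D}n^m(1-\epsilon)^{n+1}$, write $\deg P_n^S=\sum_{i\in S}d_i\,\ord_{F_i}(\overline P_n)$ via Hensel, prove a tail bound uniform in $n$, and integrate. You also handle the Haar case by noting that Haar coefficients are $(1-p^{-1})$-balanced, as the paper does.

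The genuine difference is the key divisibility estimate, where the paper uses no Fourier analysis. It conditions on $\overline\xi_L,\ldots,\overline\xi_n$. Divisibility by $Q_{\mathbf r}=\prod_{i\in S}F_i^{r_i}$, of degree $L\le n+1$, then determines $(\overline\xi_0,\ldots,\overline\xi_{L-1})$ uniquely, because $1,x,\ldots,x^{L-1}$ is a basis of $\F_p[x]/(Q_{\mathbf r})$. This gives $\mathbf P(Q_{\mathbf r}\mid\overline P_n)\le(1-\epsilon)^{L}$ with no error term. A count of the polynomially many vectors $\mathbf r$ with $R\le L_{\mathbf r}<R+d_{\max}$ then yields $\mathbf P(\deg P_n^S\ge R,\ \overline P_n\neq0)\le Ce^{-cR}$ uniformly in $n$, so no truncation is needed.

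Your Fourier bound has the sharper main term $p^{-d_ik}$, the Haar value. However, its error term $\exp(-\frac{\epsilon}{p^2}\lfloor\frac{n+1}{d_ik}\rfloor)$ gives nothing once $k$ is comparable to $n$. That is what forces your $\sqrt n$ truncation, the trivial bound $X_n\le n$, and the final appeal to $\sup_{n>D}n^me^{-c\sqrt n}\to0$. Your route also genuinely needs $1\notin S$, so that $x$ is a unit modulo $F_i^k$. The paper's conditioning bound works for every $F_i$, including $F_1=x$.

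Since only exponential decay in $D$ beating polynomial growth is needed, the paper's argument is shorter and avoids the bookkeeping; the paper explicitly remarks that Fourier-level precision is unnecessary here. Yours gives the quantitatively correct tail in the range $k\ll\sqrt n$. One minor point: the constant in front of $e^{-c\sqrt n}$ is not literally $2$. The term $p^{-d_i\lfloor\sqrt n\rfloor}$ and small values of $n$ require a constant $C$, which is harmless.
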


\begin{proof}
Recall that $F_i\in\F_p[x]$ denotes the monic irreducible polynomial corresponding to the lifted subspace $\mathcal{U}_i$. Following the notation in \eqref{subsec:zpbar}, we write $d_i:=\deg F_i$. Since $1\notin S$, none of the polynomials $F_i$, $i\in S$, is equal to $x$.
Let
$$
\bar P_n(x):=\bar\xi_nx^n+\cdots+\bar\xi_1x+\bar\xi_0\in\F_p[x]
$$
be the reduction of $P_n$ modulo $p$, and set the random event
$$
\mathcal{E}_n:=\{\bar P_n=0\}=
\{\xi_0,\ldots,\xi_n\in p\Z_p\}.
$$
By independence and the $\epsilon$-balanced assumption,
\begin{equation}
\label{eq: all coefficients divisible by p}
\mathbf{P}(\mathcal{E}_n)
\leq
(1-\epsilon)^{n+1}.
\end{equation}

For a given $(r_i)_{i\in S}\in\Z_{\geq 0}^S$, we define
$$
Q_{\mathbf r}(x):=\prod_{i\in S}F_i(x)^{r_i},
\qquad L_{\mathbf r}:=\deg Q_{\mathbf r}=
\sum_{i\in S}d_ir_i.
$$
We first claim that, whenever $L_{\mathbf r}\leq n+1$,
\begin{equation}
\label{eq: finite field polynomial divisibility bound}
\mathbf{P}\left(
Q_{\mathbf r}\mid\bar P_n
\right)
\leq
(1-\epsilon)^{L_{\mathbf r}}.
\end{equation}

To prove the claim, condition on the coefficients $\bar\xi_{L_{\mathbf r}},
\ldots,\bar\xi_n$. Modulo $Q_{\mathbf r}$, the divisibility condition
$Q_{\mathbf r}\mid\bar P_n$ uniquely determines the vector
$$
(\bar\xi_0,\ldots,\bar\xi_{L_{\mathbf r}-1})
\in\F_p^{L_{\mathbf r}}.
$$
Indeed, $1,x,\ldots,x^{L_{\mathbf r}-1}$ form an $\F_p$-basis of $\F_p[x]/(Q_{\mathbf r})$. By independence and the $\epsilon$-balanced assumption, the probability that the first
$L_{\mathbf r}$ coefficients equal any prescribed vector is at most
$(1-\epsilon)^{L_{\mathbf r}}$. Averaging over the remaining coefficients
proves \eqref{eq: finite field polynomial divisibility bound}.

Set $d_{\max}:=\max_{i\in S}d_i$. We next establish an exponential tail bound for $\deg P_n^S$ on the event $\mathcal{E}_n^c$. On this event, define
$$
M_{i,n}:=\ord_{F_i}(\bar P_n),
\qquad i\in S.
$$
By the strong form of Hensel's lemma given in \Cref{lem: Hensel},
$$
\deg P_n^S=\sum_{i\in S}d_iM_{i,n}.
$$
If $\deg P_n^S\geq R$, we may choose integers
$$
0\leq r_i\leq M_{i,n},
\qquad i\in S,
$$
such that $R\leq L_{\mathbf r}<R+d_{\max}$. Indeed, starting from $\mathbf r=0$, increase the coordinates one at a time,
without exceeding $M_{i,n}$, and stop when $L_{\mathbf r}$ first reaches or
exceeds $R$. At the last step, $L_{\mathbf r}$ increases by at most $d_{\max}$. For this choice of $\mathbf r$, we have $Q_{\mathbf r}\mid\bar P_n$. It follows that
\begin{equation}
\label{eq: lifted degree event union}
{\deg P_n^S\geq R}\cap\mathcal{E}_n^c
\subset
\bigcup_{\substack{\mathbf r\in\Z_{\geq 0}^S\
R\leq L_{\mathbf r}<R+d_{\max}}}
{Q_{\mathbf r}\mid\bar P_n}.
\end{equation}
Moreover, for every vector $\mathbf r$ appearing in
\eqref{eq: lifted degree event union},
\eqref{eq: finite field polynomial divisibility bound} gives
$$
\mathbf{P}(Q_{\mathbf r}\mid\bar P_n)
\leq
(1-\epsilon)^{L_{\mathbf r}}
\leq
(1-\epsilon)^R.
$$

For each $\ell\geq 0$, the number of vectors
$\mathbf r\in\Z_{\geq 0}^S$ satisfying $L_{\mathbf r}=\ell$ is bounded by $O_S((\ell+1)^{s-1})$. Since the interval
$$
R\leq L_{\mathbf r}<R+d_{\max}
$$
contains only $d_{\max}$ possible values of $L_{\mathbf r}$, the union in \eqref{eq: lifted degree event union} contains
$$
O_S((R+d_{\max}+1)^{s-1})
$$
events. Therefore,
\begin{equation}
\label{eq: lifted degree exponential tail}
\sup_{n\geq 1}\mathbf{P}\left(
\deg P_n^S\geq R,\mathcal{E}_n^c\right)
\leq O_S((R+d_{\max}+1)^{s-1})(1-\epsilon)^R\leq Ce^{-cR},
\qquad \forall R\geq 0.
\end{equation}
Here $C=C(S,\epsilon),c=c(S,\epsilon)>0$ depend only on $S$ and $\epsilon$.

We now deduce the required moment tightness. For every nonnegative integer-valued random variable $Y$ and every $D\geq 1$, we have
$$
Y^m\mathbf{1}_{{Y>D}}=(D+1)^m\mathbf{1}_{{Y>D}}+\sum_{r>D}\bigl((r+1)^m-r^m\bigr)\mathbf{1}_{{Y>r}}.
$$
Applying this identity with $Y=\deg P_n^S$ on the event $\mathcal{E}_n^c$ and using \eqref{eq: lifted degree exponential tail}, we obtain
\begin{equation}\label{eq: lifted degree moment tail}
\sup_{n\geq 1}\E\left[
(\deg P_n^S)^m\mathbf{1}_{{\deg P_n^S>D}}
\mathbf{1}_{\mathcal{E}_n^c}\right]\leq C(D+1)^m e^{-cD}+C\sum_{r>D}
\bigl((r+1)^m-r^m\bigr)e^{-cr}.
\end{equation}
The right-hand side tends to zero as $D\to\infty$.

It remains to control the event $\mathcal{E}_n$. Since $\deg P_n^S\leq n$, we have
$$
\E\left[(\deg P_n^S)^m
\mathbf{1}_{{\deg P_n^S>D}}
\mathbf{1}_{\mathcal{E}_n}
\right]\leq n^m\mathbf{1}_{{n>D}}\mathbf{P}(\mathcal{E}_n)\leq n^m\mathbf{1}_{{n>D}}(1-\epsilon)^{n+1}.
$$
Consequently,
$$
\sup_{n\geq 1}
\E\left[
(\deg P_n^S)^m
\mathbf{1}_{{\deg P_n^S>D}}
\mathbf{1}_{\mathcal{E}_n}
\right]\leq
\sup_{n>D}n^m(1-\epsilon)^{n+1},
$$
which also tends to zero as $D\to\infty$. Combining this estimate with
\eqref{eq: lifted degree moment tail} gives
$$
\lim_{D\to\infty}
\sup_{n\geq 1}
\E\left[
(\deg P_n^S)^m
\mathbf{1}_{{\deg P_n^S>D}}
\right]
=0.
$$

The assertion for the Haar coefficient model follows in exactly the same
way. In this case,
$$
\mathbf{P}\left(
\eta_0,\ldots,\eta_n\in p\Z_p
\right)=p^{-(n+1)},
$$
and the coefficients of $P_n^{\Haar}$ are independent and $(1-1/p)$-balanced.
\end{proof}

It is worth mentioning that the above proof does not require a more precise estimate obtained through Fourier analysis. For our purpose, it is enough that the exponential decay $(1-\epsilon)^D$ dominates any fixed polynomial growth in $D$.

We have now verified the resultant distribution and moment tightness
assumptions required by the resultant distribution method developed in
\Cref{sec: resultant distribution method}. To apply \Cref{thm: expected root statistics convergence}, we will need to verify that the limiting Haar distinguished factor is almost surely squarefree. For this purpose, we first identify the law of a Haar distinguished factor on each fixed-degree component. It is enough to consider a single lifted subspace, which is the case needed below.

\begin{lemma}[The Haar law on a fixed-degree distinguished-factor component]
\label{lem: Haar distinguished factor fixed degree law}
Fix $i\in\Z_{>0}$ and $r\geq 0$. Let
$$
\mathcal{C}_{i,r}
:=
\left\{
Q\in\Z_p[x]:
Q\text{ is monic},
\deg Q=rd_i,
\bar Q=F_i^r
\right\},
$$
and let $\mu_{i,r}$ denote the normalized Haar probability measure on
$\mathcal{C}_{i,r}$ under the natural coefficient identification
$$
\mathcal{C}_{i,r}\cong F_i^r+p\Z_p^{rd_i}.
$$
Then, for every $n\geq 1$, the restriction of the law of
$P_n^{\Haar,\{i\}}$ to $\mathcal{C}_{i,r}$ on the event
${\bar P_n^{\Haar}\neq 0}$ is a constant multiple of $\mu_{i,r}$.
More precisely, for every Borel set $\mathcal{A}\subset\mathcal{C}_{i,r}$,
$$
\mathbf{P}\left(
P_n^{\Haar,\{i\}}\in A,
\bar P_n^{\Haar}\neq 0
\right)=
\mathbf{P}\left(
P_n^{\Haar,\{i\}}\in\mathcal{C}_{i,r},
\bar P_n^{\Haar}\neq 0
\right)
\mu_{i,r}(\mathcal{A}).
$$
\end{lemma}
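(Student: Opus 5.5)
Fix $n$ and work one residue pattern at a time. On $\{\bar P_n^{\Haar}\neq 0\}$ the reduction $\bar P_n^{\Haar}$ is a nonzero polynomial of some degree $e\le n$ with leading coefficient $\bar\eta_e\neq 0$. Write $\bar P_n^{\Haar}=c\,F_i^{r}\,\bar G$ with $\gcd(\bar G,F_i)=1$ and $c\in\F_p^\times$. By Hensel's lemma (\Cref{lem: Hensel}) and the Weierstrass-type factorization behind the lifted-subspace decomposition, $P_n^{\Haar}$ factors uniquely as
$$
P_n^{\Haar}=P_n^{\Haar,\{i\}}\cdot W,
$$
where $P_n^{\Haar,\{i\}}\in\mathcal C_{i,r}$ and $W\in\Z_p[x]$ has reduction $c\bar G$. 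Here $W$ absorbs the non-monic part and the roots of large absolute value coming from coefficients of index above $e$ that are divisible by $p$.

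The plan is to condition on the reduction $\bar P_n^{\Haar}=\bar f$ for a fixed nonzero $\bar f$ with $\ord_{F_i}\bar f=r$, and to show that the conditional law of $P_n^{\Haar,\{i\}}$ is $\mu_{i,r}$. Summing over such $\bar f$ then gives the claimed identity.

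Conditioned on $\bar P_n^{\Haar}=\bar f$, the coefficient vector of $P_n^{\Haar}$ is Haar distributed on the coset $f_0+p\Z_p^{n+1}$, where $f_0$ is any lift. Consider the multiplication map
$$
\Psi:\ \mathcal C_{i,r}\times\mathcal W_{\bar f}\longrightarrow f_0+p\Z_p^{n+1},\qquad (Q,W)\longmapsto QW,
$$
where $\mathcal W_{\bar f}$ is the set of $W\in\Z_p[x]$ with $\deg W\le n-rd_i$ and $\bar W=c\bar G$. This set is an affine coset of $p\Z_p^{\,n-rd_i+1}$. Uniqueness in Hensel's lemma makes $\Psi$ a bijection, and a homeomorphism, between two compact open subsets of $\Z_p^{n+1}$ of the same dimension $rd_i+(n-rd_i+1)=n+1$. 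It then suffices to show that $\Psi$ pushes product Haar measure forward to Haar measure on the target coset. Given that, the $Q$-marginal of the (Haar) law of $P_n^{\Haar}$ is $\mu_{i,r}$, which is the claim.

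The key step, and the main obstacle, is showing that $\Psi$ is measure preserving. I would handle it with the $p$-adic change of variables formula. The map $\Psi$ is polynomial, so its Jacobian is computed from the linearization $(\delta Q,\delta W)\mapsto W\,\delta Q+Q\,\delta W$. This is the Sylvester-type map, and its determinant is, up to sign and a power of the leading coefficient of $W$, the resultant $\Res(Q,W)$. Since $\bar Q=F_i^r$ and $\bar W=c\bar G$ are coprime, \Cref{prop: relatively prime resultant} shows this determinant is a unit (here $Q$ is monic, so the proposition applies). Hence $\Psi$ has unit Jacobian everywhere, and is therefore a measure-preserving bijection.

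A subtlety is that $\deg W$ may drop: coefficients of $W$ of index above $e-rd_i$ lie in $p\Z_p$. This is harmless, because we parametrize $W$ by the full coefficient vector in degrees up to $n-rd_i$ and never require a fixed degree. One must also check that the Sylvester determinant is correct in this non-monic, padded-degree setting. If that proves awkward, I would instead argue modulo $p^k$: show that multiplication induces a bijection
$$
\bigl(\mathcal C_{i,r}\bmod p^k\bigr)\times\bigl(\mathcal W_{\bar f}\bmod p^k\bigr)\longrightarrow (f_0+p\Z_p^{n+1})\bmod p^k,
$$
using Hensel uniqueness at each finite level. Counting points, with both sides of size $p^{(k-1)(n+1)}$, then gives uniformity directly, and passing to the inverse limit over $k$ yields the measure statement.
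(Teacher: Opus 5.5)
Your proposal is correct and is essentially the paper's proof: you condition on the reduction $\bar P$, use Hensel uniqueness to make multiplication $\mathcal{C}_{i,r}\times\{W:\deg W\le n-rd_i,\ \bar W=\bar R\}\to\{P:\deg P\le n,\ \bar P\text{ fixed}\}$ a bijection, and get measure preservation from the unit Jacobian $\pm\Res(Q,W)$ via the relatively-prime resultant proposition. On the point you flagged, no power of the leading coefficient of $W$ appears; padding the degree of $W$ only introduces a power of the leading coefficient of the monic $Q$, which is $1$. Writing the target as $\Z_p[x]_{<rd_i}\oplus Q\cdot\Z_p[x]_{\le n-rd_i}$, as the paper does, makes the differential block triangular with determinant exactly $\pm\Res(Q,W)$, and your mod-$p^k$ counting fallback also works (surjectivity plus equal cardinalities already gives bijectivity) and avoids the Jacobian altogether.
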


\begin{proof}
The case $r=0$ is immediate, since $\mathcal{C}_{i,0}={1}$. We therefore assume that $r\geq 1$ and $rd_i\le n$. Fix a nonzero polynomial $\bar P\in\F_p[x]$ of degree at most $n$ satisfying $\ord_{F_i}(\bar P)=r$. Write
$$
\bar P=F_i^r\bar R,
\qquad
\gcd(F_i,\bar R)=1.
$$
We regard $\bar R$ as an element of the space of polynomials over
$\F_p$ of degree at most $n-rd_i$. Define
$$
\mathcal{R}_{\bar R}
:=\left\{R\in\Z_p[x]:
\deg R\leq n-rd_i,\ \bar R=R\bmod p
\right\}
$$
and
$$
\mathcal{P}_{\bar P}
:=
\left\{
P\in\Z_p[x]:
\deg P\leq n,\ \bar P=P\bmod p
\right\}.
$$

By the strong form of Hensel's lemma given in \Cref{lem: Hensel}, every
$P\in\mathcal{P}_{\bar P}$ admits a unique factorization $P=QR$, where $Q\in\mathcal{C}_{i,r}$ and $R\in\mathcal{R}_{\bar R}$. Thus the multiplication map
$$
\Phi:
\mathcal{C}_{i,r}\times\mathcal{R}_{\bar R}
\longrightarrow
\mathcal{P}_{\bar P},
\qquad
(Q,R)\longmapsto QR,
$$
is a bijection.

We next show that $\Phi$ preserves Haar measure. At a point $(Q,R)$, its
differential is the $\Z_p$-linear map
$$
D\Phi_{(Q,R)}(\dot Q,\dot R)
=\dot Q R+Q\dot R,
$$
where
$$
\deg\dot Q<rd_i,
\qquad
\deg\dot R\leq n-rd_i.
$$
To compute its determinant, decompose the target coefficient space into
the quotient modulo $Q$ and the subspace of multiples of $Q$. Modulo
$Q$, the differential is
$$
\dot Q\longmapsto \dot Q R
\qquad
\text{in }\Z_p[x]/(Q).
$$
The determinant of multiplication by $R$ on the free $\Z_p$-module
$\Z_p[x]/(Q)$ is, up to sign,
$$
\Res(Q,R).
$$
On the other hand, the map $\dot R\longmapsto Q\dot R$ identifies the coefficient space of polynomials of degree at most $n-rd_i$ with the space of multiples of $Q$ of degree at most $n$. Since
$Q$ is monic, this identification has determinant $1$ with respect to
the natural coefficient bases. Consequently,
$$
\det D\Phi_{(Q,R)}=\pm\Res(Q,R).
$$

Since
$$
\bar Q=F_i^r
\qquad\text{and}\qquad
\gcd(F_i,\bar R)=1,
$$
the reductions $\bar Q$ and $\bar R$ are relatively prime. Therefore,
by \Cref{prop: relatively prime resultant},
$$
\val\bigl(\Res(Q,R)\bigr)=0.
$$
Hence the Jacobian determinant of $\Phi$ is a unit at every point of
$\mathcal{C}_{i,r}\times\mathcal{R}_{\bar R}$.

It follows from the $p$-adic change-of-variables formula that $\Phi$ is
measure preserving with respect to the normalized Haar measures on
these coefficient cosets. Conditional on
$$
\bar P_n^{\Haar}=\bar P,
$$
the polynomial $P_n^{\Haar}$ is Haar distributed on
$\mathcal{P}_{\bar P}$. Therefore, under the factorization
$$
P_n^{\Haar}=P_n^{\Haar,\{i\}}R,
$$
the factor $P_n^{\Haar,\{i\}}$ is Haar distributed on
$\mathcal{C}_{i,r}$; that is, its conditional distribution is
$\mu_{i,r}$.

This conditional distribution is independent of the particular
nonzero polynomial $\bar P$ satisfying
$$
\ord_{F_i}(\bar P)=r.
$$
Averaging over all such reductions, we obtain, for every Borel set
$\mathcal{A}\subset\mathcal{C}_{i,r}$,
$$
\mathbf{P}\left(
P_n^{\Haar,\{i\}}\in\mathcal{A},
\ \bar P_n^{\Haar}\neq0
\right)=\mathbf{P}\left(
P_n^{\Haar,\{i\}}\in\mathcal{C}_{i,r},
\ \bar P_n^{\Haar}\neq0
\right)
\mu_{i,r}(\mathcal{A}),
$$
as claimed.
\end{proof}

We are therefore ready to prove
\Cref{thm: root correlation universality}.

\begin{proof}[Proof of \Cref{thm: root correlation universality}]
Take a finite set $S\subset\Z_{>0}\setminus{1}$ such that
$$
\mathcal{O}_{K_j}^{\times,\new}
\subset
\bigcup_{i\in S}\mathcal{U}_i,
\qquad\text{for every } 1\leq j\leq m.
$$
Then every coordinate of every tuple in $U$ lies in $\bigcup_{i\in S}\mc{U}_i$. Consequently, the statistic $Z_U(P)$
depends only on the $S$-distinguished factor of $P$. For every $n\geq 1$, set
$$
\mathcal{E}_n:=\{\bar P_n=0\}=
\{\xi_0,\ldots,\xi_n\in p\Z_p\}
$$
and
$$
\mathcal{E}_n^{\Haar}:=\{\bar P_n^{\Haar}=0\}
=\{\eta_0,\ldots,\eta_n\in p\Z_p\}.
$$
By independence and the balancedness assumptions,
\begin{equation}
\label{eq: exceptional reductions theorem 1.2}
\mathbf{P}(\mathcal{E}_n)\leq(1-\epsilon)^{n+1},\qquad\mathbf{P}(\mathcal{E}_n^{\Haar})=p^{-(n+1)}.
\end{equation}
On $\mathcal{E}_n^c$ and $(\mathcal{E}_n^{\Haar})^c$, let
$P_n^S$ and $P_n^{\Haar,S}$ be the corresponding $S$-distinguished factors. On the exceptional events, we set
$$
P_n^S=P_n^{\Haar,S}:=1.
$$
We first compare their resultant distributions. Fix
$Z\in\mathscr{P}^S$. On the event $\mathcal{E}_n^c$, \Cref{prop: properties of resultant} gives
$$
\val(\Res(P_n,Z))=\val(\Res(P_n^S,Z))+
\val(\Res(P_n/P_n^S,Z)).
$$
The reductions modulo $p$ of $P_n/P_n^S$ and $Z$ have no common irreducible factor. Hence, by
\Cref{prop: relatively prime resultant},
$$
\val(\Res(P_n/P_n^S,Z))=0.
$$
Therefore,
$$
\val(\Res(P_n^S,Z))=\val(\Res(P_n,Z))
$$
on $\mathcal{E}_n^c$. Similarly,
$$
\val(\Res(P_n^{\Haar,S},Z))
=
\val(\Res(P_n^{\Haar},Z))
$$
on $(\mathcal{E}_n^{\Haar})^c$.

Since $1\notin S$, the constant term of every
$Z\in\mathscr{P}^S$ lies in $\Z_p^\times$. It follows from
\Cref{thm: random polynomial resultant convergence} that
$$
\val(\Res(P_n,Z))
\quad\text{and}\quad
\val(\Res(P_n^{\Haar},Z))
$$
converge weakly to the same limiting random variable. Moreover, by
\eqref{eq: exceptional reductions theorem 1.2}, the probabilities of the events on which these variables may differ from
$$
\val(\Res(P_n^S,Z))
\quad\text{and}\quad
\val(\Res(P_n^{\Haar,S},Z)),
$$
respectively, tend to zero. Consequently,
$$
\val(\Res(P_n^S,Z))
\quad\text{and}\quad
\val(\Res(P_n^{\Haar,S},Z))
$$
also converge weakly to the same limiting random variable.

By \Cref{prop: degree moment tightness random polynomial}, we have
$$
\lim_{D\to\infty}
\sup_{n\geq 1}
\E\left[
(\deg P_n^S)^m
\mathbf{1}_{{\deg P_n^S>D}}
\right]=0
$$
and
$$
\lim_{D\to\infty}
\sup_{n\geq 1}
\E\left[
(\deg P_n^{\Haar,S})^m
\mathbf{1}_{{\deg P_n^{\Haar,S}>D}}
\right]=0.
$$
In particular, both degree sequences are tight. We may therefore apply
\Cref{thm: resultant distribution convergence introduction} to obtain random elements
$$
P_\infty^S,\ P_\infty^{\Haar,S}\in\mathscr{P}^S
$$
such that
$$
P_n^S
\overset{d}{\longrightarrow}
P_\infty^S
\qquad\text{and}\qquad
P_n^{\Haar,S}
\overset{d}{\longrightarrow}
P_\infty^{\Haar,S}.
$$
Moreover, for every fixed $Z\in\mathscr{P}^S$,
$$
\val(\Res(P_n^S,Z))
\overset{d}{\longrightarrow}
\val(\Res(P_\infty^S,Z))
$$
and
$$
\val(\Res(P_n^{\Haar,S},Z))
\overset{d}{\longrightarrow}
\val(\Res(P_\infty^{\Haar,S},Z)).
$$
Since the two sequences on the left converge to the same distribution, we have
$$
\val(\Res(P_\infty^S,Z))
\overset{d}{=}
\val(\Res(P_\infty^{\Haar,S},Z))
$$
for every $Z\in\mathscr{P}^S$. Consequently,
\Cref{thm: resultant distribution determination introduction} gives
\begin{equation}
\label{eq: general and Haar limiting factors equal}
P_\infty^S
\overset{d}{=}
P_\infty^{\Haar,S}.
\end{equation}

We next prove that $P_\infty^{\Haar,S}$ is almost surely squarefree. Since
$$
P_\infty^{\Haar,S}=\prod_{i\in S}
P_\infty^{\Haar,\{i\}},
$$
where the factors are supported on distinct lifted subspaces, it suffices to prove that, for every fixed $i\in S$, the $\{i\}$-distinguished factor
$P_\infty^{\Haar,\{i\}}$ is almost surely squarefree.
For every $r\geq 0$, we use the notation from \Cref{lem: Haar distinguished factor fixed degree law} that
$$
\mathcal{C}_{i,r}=
\left\{
Q\in\Z_p[x]:
Q\text{ is monic},
\deg Q=rd_i,
\bar Q=F_i^r
\right\},
$$
and let $\mu_{i,r}$ denote its normalized Haar probability measure. Since
$\mathcal{C}_{i,r}$ is a clopen component of
$\mathscr{P}^{\{i\}}$, weak convergence gives
$$
\mathbf{P}\left(
P_n^{\Haar,\{i\}}\in\mathcal{C}_{i,r}
\right)
\longrightarrow
\mathbf{P}\left(
P_\infty^{\Haar,\{i\}}\in\mathcal{C}_{i,r}
\right).
$$
Set
$$
a_{i,r}:=\mathbf{P}\left(
P_\infty^{\Haar,\{i\}}\in\mathcal{C}_{i,r}
\right)=\mathbf{P}\left(
\deg P_\infty^{\Haar,\{i\}}=rd_i
\right).
$$
Thus $(a_{i,r})_{r\geq 0}$ is the limiting degree distribution of the
$\{i\}$-distinguished factor.

By \Cref{lem: Haar distinguished factor fixed degree law}, conditional on
$$
P_n^{\Haar,\{i\}}\in\mathcal{C}_{i,r}
\quad\text{and}\quad
\bar P_n^{\Haar}\neq 0,
$$
the polynomial $P_n^{\Haar,\{i\}}$ has distribution $\mu_{i,r}$. Since
$$
\mathbf{P}\left(\bar P_n^{\Haar}=0\right)
=p^{-(n+1)}\longrightarrow 0,
$$
passing to the limit shows that the restriction of the law of
$P_\infty^{\Haar,\{i\}}$ to $\mathcal{C}_{i,r}$ is
$a_{i,r}\mu_{i,r}$. Equivalently,
$$
\mathcal{L}\left(P_\infty^{\Haar,\{i\}}\right)
=
\sum_{r\geq 0}a_{i,r}\mu_{i,r}.
$$
In particular, conditional on
$$
\deg P_\infty^{\Haar,\{i\}}=rd_i,
$$
provided $a_{i,r}>0$, the polynomial
$P_\infty^{\Haar,\{i\}}$ is Haar distributed on
$\mathcal{C}_{i,r}$.

For $r\geq 1$, the discriminant is a nonzero polynomial in the
$rd_i$ non-leading coefficients of a monic polynomial of degree $rd_i$.
Its zero set therefore has $\mu_{i,r}$-measure zero. The case $r=0$ is immediate, since
$\mathcal{C}_{i,0}={1}$. Consequently,
$$
\mathbf{P}\left(
P_\infty^{\Haar,\{i\}}
\text{ has a repeated root}
\right)=\sum_{r\geq 0}a_{i,r}
\mu_{i,r}\left(
\left\{
Q\in\mathcal{C}_{i,r}:
Q\text{ has a repeated root}
\right\}\right)=0.
$$
Since $S$ is finite, every factor
$P_\infty^{\Haar,\{i\}}$, $i\in S$, is simultaneously squarefree almost surely. Moreover, factors supported on distinct lifted subspaces cannot share a root. Therefore,
$$
\mathbf{P}\left(
P_\infty^{\Haar,S}
\text{ has a repeated root}
\right)=0.
$$
Together with \eqref{eq: general and Haar limiting factors equal}, this gives
\begin{equation}
\label{eq: limiting factors squarefree}
\mathbf{P}\left(
P_\infty^S\text{ has a repeated root}
\right)=\mathbf{P}\left(
P_\infty^{\Haar,S}\text{ has a repeated root}
\right)=0.
\end{equation}

The root tuples counted by $Z_U$ lie entirely in the lifted subspaces indexed by $S$. Hence, on $\mathcal{E}_n^c$ we have $Z_U(P_n)=Z_U(P_n^S)$, and on $(\mathcal{E}_n^{\Haar})^c$,
$$
Z_U(P_n^{\Haar})=Z_U(P_n^{\Haar,S}).
$$
On the exceptional events, the corresponding root-tuple counts are bounded by $n^m$. Thus,
$$
\left|
\E[Z_U(P_n)]-
\E[Z_U(P_n^S)]
\right|
\leq
n^m\mathbf{P}(\mathcal{E}_n)
\leq
n^m(1-\epsilon)^{n+1}
\longrightarrow 0
$$
and
$$
\left|
\E[Z_U(P_n^{\Haar})]
-
\E[Z_U(P_n^{\Haar,S})]
\right|
\leq
n^mp^{-(n+1)}
\longrightarrow 0.
$$

Applying
\Cref{thm: expected root statistics convergence}, using
\eqref{eq: limiting factors squarefree}, gives
$$
\lim_{n\to\infty}\E[Z_U(P_n^S)]
=\E[Z_U(P_\infty^S)]
$$
and
$$
\lim_{n\to\infty}\E[Z_U(P_n^{\Haar,S})]
=\E[Z_U(P_\infty^{\Haar,S})].
$$
Since the two limiting polynomials have the same distribution, we conclude that
$$
\lim_{n\to\infty}\E[Z_U(P_n)]
=\lim_{n\to\infty}\E[Z_U(P_n^{\Haar})].
$$

Finally, since each
$\mc{O}_{K_j}^{\times,\new}$ is clopen in
$\mc{O}_{K_j}^{\new}$, the set $U$ is clopen, and hence open, in
$$
\mc{O}_{K_1}^{\new}
\times\cdots\times
\mc{O}_{K_m}^{\new}.
$$ 
Hence, we can apply \cite[Theorem 5.8]{caruso2022zeroes} to prove that for every sufficiently large $n$,
$$
\E[Z_U(P_n^{\Haar})]
=\int_U\rho_{K_1,\ldots,K_m}^{(\infty),\poly}(x_1,\ldots,x_m)dx_1\cdots dx_m.
$$
Therefore,
\begin{equation}
\label{eq: root universality clopen}
\lim_{n\to\infty}\E[Z_U(P_n)]
=\int_U\rho_{K_1,\ldots,K_m}^{(\infty),\poly}(x_1,\ldots,x_m)dx_1\cdots dx_m.
\end{equation}
This completes the proof.
\end{proof}

\begin{rmk}
We mention ongoing joint work with Asvin G and Van Peski on explicit formulas for the Haar correlation functions considered above. For distinct points $x_1,\ldots,x_m\in\Z_p$, we evaluate the $m$-point correlation function for arbitrary $m$ by a finite expression determined by the ultrametric tree of the configuration. This yields explicit formulas for all limiting factorial moments of the number of roots in $\Z_p$, and hence its limiting distribution in the Haar coefficient model. Analogous methods apply to configurations in finite extensions of $\Q_p$. Combined with \Cref{thm: root correlation universality}, these formulas also describe the corresponding universal limiting statistics for roots of absolute value $1$ of polynomials with independent $\epsilon$-balanced coefficients.
\end{rmk}

%% file: references.bib
@article{wood2019random,
  title={Random integral matrices and the Cohen-Lenstra heuristics},
  author={Wood, Melanie Matchett},
  journal={American Journal of Mathematics},
  volume={141},
  number={2},
  pages={383--398},
  year={2019},
  publisher={Johns Hopkins University Press}
}

@article{nguyen2025local,
  author  = {Nguyen, Hoi H. and Wood, Melanie Matchett},
  title   = {Local and global universality of random matrix cokernels},
  journal = {Mathematische Annalen},
  volume  = {391},
  number  = {4},
  pages   = {5117--5210},
  year    = {2025}
}

@article{lee2023universality,
  author  = {Lee, Jungin},
  title   = {Universality of the cokernels of random $p$-adic Hermitian matrices},
  journal = {Transactions of the American Mathematical Society},
  volume  = {376},
  number  = {12},
  pages   = {8699--8732},
  year    = {2023}
}

@article{jung2026sharp,
  author = {Jung, Jiwan and Lee, Jungin and Yu, Myungjun},
  title = {Sharp threshold for universality of cokernels of classical random
           matrix models over the $p$-adic integers},
  journal = {arXiv preprint arXiv:2603.12879},
  year = {2026}
}

@article{dyson1962statisticalI,
  author  = {Dyson, Freeman J.},
  title   = {Statistical Theory of the Energy Levels of Complex Systems. I},
  journal = {Journal of Mathematical Physics},
  volume  = {3},
  year    = {1962}
}

@article{dyson1962statisticalIII,
  author  = {Dyson, Freeman J.},
  title   = {Statistical Theory of the Energy Levels of Complex Systems. III},
  journal = {Journal of Mathematical Physics},
  volume  = {3},
  year    = {1962}
}

@book{mehta2004random,
  author    = {Mehta, Madan Lal},
  title     = {Random Matrices},
  edition   = {3},
  publisher = {Elsevier/Academic Press},
  year      = {2004}
}

@article{macchi1975coincidence,
  author  = {Macchi, Odile},
  title   = {The Coincidence Approach to Stochastic Point Processes},
  journal = {Advances in Applied Probability},
  volume  = {7},
  pages   = {83--122},
  year    = {1975}
}

@article{soshnikov2000determinantal,
  author  = {Soshnikov, Alexander},
  title   = {Determinantal Random Point Fields},
  journal = {Russian Mathematical Surveys},
  volume  = {55},
  number  = {5},
  pages   = {923--975},
  year    = {2000}
}

@article{frechet1931proof,
  title={A proof of the generalized second-limit theorem in the theory of probability},
  author={Fr{\'e}chet, Maurice and Shohat, James},
  journal={Transactions of the American Mathematical Society},
  volume={33},
  number={2},
  pages={533--543},
  year={1931}
}

@inproceedings{stieltjes1894recherches,
  title={Recherches sur les fractions continues},
  author={Stieltjes, T-J},
  booktitle={Annales de la Facult{\'e} des sciences de Toulouse: Math{\'e}matiques},
  volume={8},
  number={4},
  pages={J1--J122},
  year={1894}
}

@article{tao2015local,
  title={Local universality of zeroes of random polynomials},
  author={Tao, Terence and Vu, Van},
  journal={International Mathematics Research Notices},
  volume={2015},
  number={13},
  pages={5053--5139},
  year={2015},
  publisher={Oxford University Press}
}

@article{nguyen2022roots,
  title={Roots of random functions: a framework for local universality},
  author={Nguyen, Oanh and Vu, Van},
  journal={American Journal of Mathematics},
  volume={144},
  number={1},
  pages={1--74},
  year={2022},
  publisher={Johns Hopkins University Press}
}

@article{kabluchko2014asymptotic,
  title={Asymptotic distribution of complex zeros of random analytic functions},
  author={Kabluchko, Zakhar and Zaporozhets, Dmitry},
  journal={The Annals of Probability},
  pages={1374--1395},
  year={2014},
  publisher={JSTOR}
}

@article{tao2011random,
  title={Random matrices: universality of local eigenvalue statistics},
  author={Tao, Terence and Vu, Van},
  journal={Acta {M}athematica},
  year={2011}
}

@article{breuillard2019irreducibility,
  title={Irreducibility of random polynomials of large degree},
  author={Breuillard, Emmanuel and Varj{\'u}, P{\'e}ter P},
  journal={Acta Mathematica},
  year={2019}
}

@article{wigner1958distribution,
  title={On the distribution of the roots of certain symmetric matrices},
  author={Wigner, Eugene P},
  journal={Annals of Mathematics},
  volume={67},
  number={2},
  pages={325--327},
  year={1958},
  publisher={JSTOR}
}

@incollection{wigner1993characteristic2,
  title={Characteristic vectors of bordered matrices with infinite dimensions II},
  author={Wigner, Eugene P},
  booktitle={The Collected Works of Eugene Paul Wigner: Part A: The Scientific Papers},
  pages={541--545},
  year={1993},
  publisher={Springer}
}

@incollection{wigner1993characteristic1,
  title={Characteristic vectors of bordered matrices with infinite dimensions i},
  author={Wigner, Eugene P},
  booktitle={The Collected Works of Eugene Paul Wigner: Part A: The Scientific Papers},
  pages={524--540},
  year={1993},
  publisher={Springer}
}

@article{he2023universality,
  title={Universality for low-degree factors of random polynomials over finite fields},
  author={He, Jimmy and Tuan Pham, Huy and Wenqiang Xu, Max},
  journal={International Mathematics Research Notices},
  volume={2023},
  number={17},
  pages={14752--14794},
  year={2023},
  publisher={Oxford University Press}
}

@article{ibragimov1971expected,
  title={On the expected number of real zeros of random polynomials I. Coefficients with zero means},
  author={Ibragimov, Ildar A and Maslova, Nina B},
  journal={Theory of Probability \& Its Applications},
  volume={16},
  number={2},
  pages={228--248},
  year={1971},
  publisher={SIAM}
}

@article{erdos1956number,
  title={On the number of real roots of a random algebraic equation},
  author={Erd{\"o}s, Paul and Offord, A Cyril},
  journal={Proceedings of the London Mathematical Society},
  volume={3},
  number={1},
  pages={139--160},
  year={1956},
  publisher={Wiley Online Library}
}

@article{kac1943average,
  title={On the average number of real roots of a random algebraic equation},
  author={Kac, Mark},
  journal={Bulletin of the American Mathematical Society},
  year={1943}
}

@article{shen2026eigenvalues,
  title={Eigenvalues of $p$-adic random matrices},
  author={Shen, Jiahe and Van Peski, Roger},
  journal={arXiv preprint arXiv:2601.06283},
  year={2026}
}

@book{neukirch2013algebraic,
  title={Algebraic number theory},
  author={Neukirch, J{\"u}rgen},
  volume={322},
  year={2013},
  publisher={Springer Science \& Business Media}
}

@article{van2021limits,
  title={Limits and fluctuations of p-adic random matrix products},
  author={Van Peski, Roger},
  journal={Selecta Mathematica},
  volume={27},
  pages={1--71},
  year={2021},
  publisher={Springer}
}

@article {caruso2022zeroes,
    AUTHOR = {Caruso, Xavier},
     TITLE = {Where are the zeroes of a random {$p$}-adic polynomial?},
   JOURNAL = {Forum Math. Sigma},
  FJOURNAL = {Forum of Mathematics. Sigma},
    VOLUME = {10},
      YEAR = {2022},
     PAGES = {Paper No. e55, 41},
      ISSN = {2050-5094},
   MRCLASS = {11S05 (11K41 12E05 60E05)},
  MRNUMBER = {4454337},
MRREVIEWER = {Kazunari\ Sugiyama},
       DOI = {10.1017/fms.2022.27},
       URL = {https://doi.org/10.1017/fms.2022.27},
}

@article{cheong2023distribution,
  title={The distribution of the cokernel of a polynomial evaluated at a random integral matrix},
  author={Cheong, Gilyoung and Yu, Myungjun},
  journal={arXiv preprint arXiv:2303.09125},
  year={2023},
    note={To appear in \emph{American Journal of Mathematics}}
}

@article{wood2017distribution,
  title={The distribution of sandpile groups of random graphs},
  author={Wood, Melanie Matchett},
  journal={Journal of the American Mathematical Society},
  volume={30},
  number={4},
  pages={915--958},
  year={2017}
}

@article{ellenberg2011modeling,
  title={Modeling $\lambda$-invariants by p-adic random matrices},
  author={Ellenberg, Jordan S and Jain, Sonal and Venkatesh, Akshay},
  journal={Communications on pure and applied mathematics},
  volume={64},
  number={9},
  pages={1243--1262},
  year={2011},
  publisher={Wiley Online Library}
}

@article{sawin2022moment,
  title={The moment problem for random objects in a category},
  author={Sawin, Will and Wood, Melanie Matchett},
  journal={arXiv preprint arXiv:2210.06279},
  year={2022}
}

@article{nguyen2022random,
  title={Random integral matrices: universality of surjectivity and the cokernel},
  author={Nguyen, Hoi H and Wood, Melanie Matchett},
  journal={Inventiones mathematicae},
  volume={228},
  number={1},
  pages={1--76},
  year={2022},
  publisher={Springer}
}

@article{erdHos2012bulk,
  title={Bulk universality for generalized {W}igner matrices},
  author={Erd{\H{o}}s, L{\'a}szl{\'o} and Yau, Horng-Tzer and Yin, Jun},
  journal={Probability Theory and Related Fields},
  volume={154},
  number={1-2},
  pages={341--407},
  year={2012},
  publisher={Springer}
}

@article{shmueli2023expected,
  title={The expected number of roots over the field of {$p$}-adic numbers},
  author={Shmueli, Roy},
  journal={International Mathematics Research Notices},
  volume={2023},
  number={3},
  pages={2543--2571},
  year={2023},
  publisher={Oxford University Press}
}
